\theoremstyle{plain}
\newtheorem{thm}{Theorem}[section]
\newtheorem{lemma}[thm]{Lemma}
\newtheorem{prop}[thm]{Proposition}
\newtheorem{cor}[thm]{Corollary}
\newtheorem{conjec}[thm]{Conjecture}
\newtheorem{qtn}[thm]{Question}
\theoremstyle{definition}
\newtheorem{ex}[thm]{Example}
\newtheorem{rem}[thm]{Remark}
\theoremstyle{remark}
\DeclareMathOperator{\Aut}{Aut} 
 \DeclareMathOperator{\PSL}{PSL}
\DeclareMathOperator{\Ort}{O} \DeclareMathOperator{\PO}{PO}
 \DeclareMathOperator{\SO}{SO}
 \DeclareMathOperator{\Ad}{Ad}
\DeclareMathOperator{\id}{Id}
\DeclareMathOperator{\Isom}{Isom}\DeclareMathOperator{\spn}{span}
\DeclareMathOperator{\OF}{OF}
\newcommand{\al}{\alpha}
\newcommand{\ga}{\gamma}
\newcommand{\Ga}{\Gamma}
\newcommand{\De}{\Delta}
\newcommand{\la}{\lambda}
\newcommand{\sig}{\sigma}
\newcommand{\Fr}[1]{\ensuremath{\mathfrak{#1}}}
\newcommand{\fh}{\Fr h}
\newcommand{\fu}{\Fr u}
\newcommand{\fw}{\Fr w}
\newcommand{\Hy}{\ensuremath{\mathbb{H}}}
\newcommand{\Q}{\ensuremath{\mathbb{Q}}}
\newcommand{\R}{\ensuremath{\mathbb{R}}}
\newcommand{\Z}{\ensuremath{\mathbb{Z}}}
\newcommand{\calG}{\ensuremath{\mathcal{G}}}
\newcommand{\calH}{\ensuremath{\mathcal{H}}}
\newcommand{\calL}{\ensuremath{\mathcal{L}}}
\newcommand{\calO}{\ensuremath{\mathcal{O}}}
\newcommand{\calV}{\ensuremath{\mathcal{V}}}
\newcommand{\calX}{\ensuremath{\mathcal{X}}}
\newcommand{\ssm}{\smallsetminus}
\title[Totally Geodesic Submanifolds]{Finiteness of Maximal Geodesic Submanifolds in Hyperbolic Hybrids}
\author[D Fisher]{David Fisher}
\address{Department of Mathematics\\Indiana University\\Bloomington, IN 47405}
\email{fisherdm@indiana.edu, nimimill@iu.edu}
\author[J.-F. Lafont]{Jean-Fran\c cois Lafont}
\address{Department of Mathematics\\Ohio State University\\Columbus, OH 43210}
\email{jlafont@math.ohio-state.edu}
\author[N Miller]{Nicholas Miller}
\author[M Stover]{Matthew Stover}
\address{Department of Mathematics\\Temple University\\Philadelphia, PA 19122}
\email{mstover@temple.edu}
\begin{document}

\begin{abstract}
We show that large classes of non-arithmetic hyperbolic $n$-manifolds, including the hybrids introduced by Gromov and Piatetski-Shapiro and many of their generalizations, have only finitely many finite-volume immersed totally geodesic hypersurfaces. In higher codimension, we prove finiteness for geodesic submanifolds of dimension at least $2$ that are \emph{maximal}, i.e., not properly contained in a proper geodesic submanifold of the ambient $n$-manifold.  The proof is a mix of structure theory for arithmetic groups, dynamics, and geometry in negative curvature.
\end{abstract}

\maketitle

\section{Introduction}\label{sec:Intro}

\noindent To simplify the discussion, throughout this introduction a hyperbolic manifold will mean a connected, oriented, complete, finite-volume hyperbolic $n$-manifold, and a geodesic submanifold will be a complete, immersed, finite-volume, totally geodesic subspace (note that a totally geodesic subspace need not be orientable). The main motivation of this paper is the following question which, as far as we know, has been asked independently by Reid and in dimension $3$ by McMullen \cite[Qn.\ 7.6]{Curt}:

\begin{qtn}\label{qtn:BigQ}
Let $M$ be a non-arithmetic hyperbolic manifold of dimension at least $3$. Does $M$ have at most finitely many geodesic submanifolds of codimension one?
\end{qtn}

\noindent We will prove the answer is yes for a large class of non-arithmetic hyperbolic manifolds that includes the famous non-arithmetic manifolds constructed by Gromov and Piatetski-Shapiro \cite{GPS}.  We also prove more general results valid in arbitrary codimension $k < n-1$, answering a more general question of Reid. Making a precise statement requires some more care and notation. Before embarking on that, we discuss some background and motivation for the conjecture.

\medskip

Totally geodesic submanifolds of hyperbolic manifolds, when they exist, have proven fundamental in solving a number of important problems. Perhaps most famously, Gromov and Piatetski-Shapiro used cut-and-paste of arithmetic hyperbolic manifolds along codimension one geodesic submanifolds to build non-arithmetic hyperbolic manifolds in all dimensions \cite{GPS}.  More recently, variants on their construction first introduced in \cite{SevenSamuraiShort} were used by Gelander and Levit to prove that ``most" hyperbolic manifolds in dimension at least $4$ are non-arithmetic \cite{GeLev}.

In another famous application, Millson used geodesic submanifolds to construct hyperbolic manifolds in all dimensions with positive $1^{st}$ betti number \cite{Millson}. Millson's examples are arithmetic, and a key idea in the proof is that if an arithmetic hyperbolic manifold contains a geodesic submanifold, then it contains many of them. One can see this as an easy application of the fact that the commensurator of an arithmetic group is dense in the isometry group of the associated symmetric space \cite[Th.\ 2]{BorelCrelle}. In particular, density of the commensurator allows one to prove the following:

\medskip

\noindent
\textbf{Arithmetic geodesic submanifold dichotomy}: \emph{For any $1 \le k \le n-1$, an arithmetic hyperbolic $n$-manifold either contains no codimension $k$ geodesic submanifolds, or it contains infinitely many and they are everywhere dense.}

\medskip

This was perhaps first made precise in dimension $3$ by Maclachlan--Reid and Reid \cite{MRTG, ReidTG}, who exhibited the first hyperbolic $3$-manifolds with no totally geodesic surfaces. (For non-arithmetic examples, see \cite{Calegari} for fibered knots with no totally geodesic surfaces.)

\medskip

In the non-arithmetic setting, this commensurator argument is not available. A theorem of Margulis shows that an irreducible lattice $\Gamma$ in an adjoint semisimple Lie group $G$ is arithmetic if its commensurator is topologically dense in $G$. From this one deduces that if $\Gamma$ is non-arithmetic, then its commensurator is itself a lattice that contains $\Gamma$ with finite index \cite[p.\ 2]{MargulisBook}, and so Question \ref{qtn:BigQ} is reasonable. The simplest form of our main result is the following.

\begin{thm}\label{thm:SimpleMain}
For every $n \ge 3$, there exist infinitely many commensurability classes of finite-volume non-arithmetic hyperbolic $n$-manifolds for which the collection of all codimension one finite-volume totally geodesic submanifolds is finite but nonempty.
\end{thm}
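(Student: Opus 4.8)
The plan is to produce the examples as Gromov--Piatetski-Shapiro hybrids and then invoke the finiteness half of our main theorem. First I would recall the GPS construction: take two non-commensurable arithmetic hyperbolic $n$-manifolds $M_1, M_2$ of simplest type, each containing an embedded totally geodesic hypersurface $V_i$ arising from a rational quadratic subspace, arrange (after passing to finite covers, using separability of geodesic subgroups in arithmetic lattices) that $V_1$ and $V_2$ are isometric and have connected complement, cut each $M_i$ along $V_i$, and glue. The resulting closed hyperbolic $n$-manifold $M = M_1 \cup_V M_2$ is non-arithmetic precisely because $M_1$ and $M_2$ lie in different commensurability classes (the non-arithmeticity argument of \cite{GPS}), and by construction $M$ contains at least one finite-volume totally geodesic hypersurface, namely the image of $V$, so the collection in question is nonempty. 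Varying the arithmetic input data (different defining quadratic forms, different fields of definition, different congruence covers) produces infinitely many commensurability classes of such $M$; one must check that the construction can be carried out in every dimension $n \ge 3$, which is exactly what \cite{GPS} establishes.

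Second, I would verify that these hybrids fall within the hypotheses of the general finiteness theorem proved in the body of the paper — that is, that a GPS hybrid is an instance of the class of manifolds for which we prove that there are only finitely many finite-volume immersed totally geodesic hypersurfaces. This is where the real content lies: the finiteness theorem is the statement whose proof ``is a mix of structure theory for arithmetic groups, dynamics, and geometry in negative curvature,'' and the hybrids are designed to be the motivating special case. Granting that theorem, $M$ has only finitely many codimension-one geodesic submanifolds, and combined with the nonemptiness from the previous paragraph we get: finite but nonempty. Since infinitely many pairwise non-commensurable hybrids exist, we obtain infinitely many commensurability classes with the desired property, proving Theorem~\ref{thm:SimpleMain}.

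The main obstacle is not the finiteness input (that is the work of the rest of the paper, which we are entitled to cite) but rather ensuring \emph{infinitely many distinct commensurability classes}. Two hybrids built from different data could a priori be commensurable, so I would argue invariance: a non-arithmetic hyperbolic manifold has a well-defined commensurability invariant coming from its (unique up to conjugacy) commensurator, and a GPS hybrid ``remembers'' the commensurability classes of its two arithmetic pieces — the geodesic hypersurfaces it contains are organized into two families lying in the two arithmetic pieces, and these pieces, together with their arithmetic invariants (invariant trace field, quaternion or Clifford algebra data, signatures of the defining forms), are determined up to commensurability by $M$. Choosing sequences of arithmetic manifolds $M_1^{(j)}, M_2^{(j)}$ realizing infinitely many distinct such invariant packages then forces the hybrids $M^{(j)}$ into infinitely many commensurability classes. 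A secondary technical point is the passage to finite covers needed to make the gluing hypersurfaces match up and have connected complement; this uses subgroup separability (LERF-type results) for the relevant arithmetic lattices along geodesic subgroups, which is standard in this setting.
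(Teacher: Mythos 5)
Your proposal follows the paper's own route: Theorem \ref{thm:SimpleMain} is deduced as the $k=1$ case of Theorem \ref{thm:Main} (codimension one submanifolds being automatically maximal) applied to Gromov--Piatetski-Shapiro hybrids, with the cutting hypersurface supplying nonemptiness and variation of the arithmetic data---which is remembered by $M$ via the commensurability classes of its building blocks, cf.\ Proposition \ref{prop:BlockComm}---supplying infinitely many commensurability classes. One small slip: the gluing hypersurfaces $V_i$ should be chosen \emph{separating} (i.e.\ with disconnected complement), not with connected complement, so that one can take one side $N_i$ of each and glue $N_1$ to $N_2$ along their isometric boundaries.
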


As we will describe below, Theorem \ref{thm:SimpleMain} applies in particular to the class of manifolds constructed by Gromov and Piatetski-Shapiro, along with many variants of their construction. In dimension $n \ge 4$, we do not know an example of a non-arithmetic hyperbolic $n$-manifold containing no codimension one geodesic submanifolds. Interestingly, finiteness results of this kind were only known previously for compact immersed totally geodesic surfaces in \emph{infinite volume} hyperbolic $3$-manifolds with compact convex core, by McMullen--Mohammadi--Oh \cite[Thm.\ 1.4]{MMO}. See very recent work of Benoist--Oh for the geometrically finite case \cite[Thm.\ 1.5]{BenoistOh}.

\medskip

Even for finite-volume hyperbolic $3$-manifolds, Theorem \ref{thm:SimpleMain} is new and the first result of its kind on the question of McMullen and Reid. All previous arguments in that setting could only show finiteness of geodesic subsurfaces by showing there were none at all. In fact, we will show in \S \ref{subsec:Links} that explicit examples of hyperbolic links with a finite but nonempty set of totally geodesic surfaces are relatively easy to construct.

\begin{thm}\label{thm:LinkMain}
There are infinitely many distinct hyperbolic links $L$ in the $3$-sphere so that the collection of all finite-area totally geodesic surfaces in $S^3 \ssm L$ is finite but nonempty. The link complements can be chosen to be mutually incommensurable. One can choose $L$ such that all its totally geodesic surfaces are noncompact.
\end{thm}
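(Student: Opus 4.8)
\medskip
\noindent\emph{Proof strategy.}
The plan is to realize the non-arithmetic hybrids produced by our finiteness theorem as complements of links in $S^3$, each visibly containing a finite-area totally geodesic surface. For the building blocks, take $S^3\ssm L_1$ and $S^3\ssm L_2$ to be arithmetic link complements lying in distinct commensurability classes --- say commensurable with Bianchi groups $\PGL_2(\calO_{d_1})$ and $\PGL_2(\calO_{d_2})$ with $d_1\neq d_2$ --- each having an unknotted ``belt'' component $C_i$ that bounds a disk $D_i$ meeting $L_i\ssm C_i$ transversely in exactly two points (the Whitehead link, for instance, has such a component). Then $\Sigma_i:=D_i\ssm L_i$ is an essential properly embedded thrice-punctured sphere in $S^3\ssm L_i$, and by Adams' theorem it is isotopic to a totally geodesic surface of area $2\pi$; as a thrice-punctured sphere it is rigid and has only finitely many self-isometries.

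\medskip
First I would cut $S^3\ssm L_i$ along $\Sigma_i$ and glue the pieces $V_1$, $V_2$ --- one on a chosen side of each $\Sigma_i$ --- across an isometry of their boundary surfaces. Because the $\Sigma_i$ are meridian disks of the belt components, this belted-sum operation can be carried out inside $S^3$ and yields a link complement $S^3\ssm L$, in which the image $\Sigma$ of the gluing locus is an embedded totally geodesic thrice-punctured sphere. Since $V_1$ and $V_2$ are pieces of incommensurable arithmetic manifolds glued along a totally geodesic hypersurface, the Gromov--Piatetski-Shapiro analysis together with Margulis's commensurator criterion \cite{GPS} shows $S^3\ssm L$ is non-arithmetic, and it is a hybrid of the type covered by our finiteness theorem. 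Hence the collection of finite-area totally geodesic surfaces in $S^3\ssm L$ is finite, and it is nonempty as it contains $\Sigma$.

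\medskip
To get infinitely many examples I would iterate, chaining $k$ arithmetic pieces inside $S^3$ by successive belted sums; for each $k$ this produces a non-arithmetic hybrid link complement $M_k=S^3\ssm L_k$ with all the above properties and with $\mathrm{vol}(M_k)$ growing linearly in $k$, so the $M_k$ are pairwise non-homeomorphic and the $L_k$ pairwise distinct. Pairwise incommensurability of infinitely many of the $M_k$ should follow, as in \cite{GPS} and subsequent work, from the fact that a commensurability between such hybrids must respect their canonical decomposition into maximal arithmetic pieces: a linear chain carries only a bounded amount of symmetry, so $\pi_1(M_k)$ has bounded index in its commensurator, whence the commensurator quotients $\mathbb{H}^3/\Comm(\pi_1 M_k)$ --- which classify the commensurability classes --- have unbounded volume and so are non-isometric for infinitely many $k$.

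\medskip
To force every totally geodesic surface to be noncompact, take each $S^3\ssm L_i$ to contain no closed totally geodesic surface (true for the Whitehead link complement and for small arithmetic link complements in many other commensurability classes), and choose the $L_i$ and the surfaces $\Sigma_i$ generically so that no compact totally geodesic subsurface coming from $V_1$ can be matched along $\Sigma$ to one coming from $V_2$; then no closed totally geodesic surface can assemble in $S^3\ssm L$, and since by the finiteness theorem only finitely many surfaces need be inspected, all of them are noncompact. The input from negative curvature and the structure of arithmetic groups is supplied by our finiteness theorem and by \cite{GPS}; what remains, and what I expect to be the main obstacle, is the link-theoretic bookkeeping inside $S^3$ --- checking that the belted-sum cut-and-paste along the totally geodesic thrice-punctured spheres stays within the category of link complements in $S^3$ throughout the iteration, while keeping enough freedom in the choice of building blocks to realize infinitely many commensurability classes and to eliminate closed totally geodesic surfaces. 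The rigidity of the thrice-punctured sphere and the concreteness of the ``belt disk'' picture are what I expect to make all of this tractable.
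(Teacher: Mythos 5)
Your core construction is the same as the paper's: belted sums of incommensurable arithmetic link complements along the totally geodesic thrice-punctured spheres supplied by Adams' theorem, followed by an application of Theorem \ref{thm:Main}; this correctly yields a single non-arithmetic link whose geodesic surfaces are finite in number and include the cutting sphere. The gaps are exactly in the two places you treat most lightly. For infinitude of commensurability classes, your argument---long chains have bounded symmetry, so $\pi_1(M_k)$ has uniformly bounded index in $\Comm(\pi_1(M_k))$, so the minimal orbifolds have unbounded volume---rests on the unproven assertion that any commensuration respects the decomposition into arithmetic pieces; without that structural fact nothing prevents infinitely many $M_k$ from covering one small orbifold (linear volume growth only gives non-homeomorphic, not incommensurable). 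It also requires realizing arbitrarily long chains inside $S^3$, which needs middle pieces carrying two \emph{disjoint} belt disks; the Whitehead link does not obviously provide these. The paper sidesteps both issues: it fixes a chain of length three, $N_1 \leftrightarrow N_2 \leftrightarrow N_r$, where the varying piece $N_r$ is obtained by $1/r$-Dehn surgery on a spare unknotted component of a five-component arithmetic link, and it distinguishes commensurability classes by the invariant trace field. Since the $L_r$ are surgeries on a fixed link their volumes are uniformly bounded, so by Jeon's finiteness theorem the degrees of their trace fields are unbounded, and the Neumann--Reid formula expressing the trace field of the belted sum as a compositum of the trace fields of the pieces finishes the argument.

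The noncompactness claim is where your proposal actually breaks. The premise that the Whitehead link complement contains no closed totally geodesic surface is false in the immersed category relevant to the theorem: it is arithmetic and commensurable with $\PSL_2(\Z[i])$, whose commensurability class contains cocompact Fuchsian subgroups, so by the arithmetic dichotomy it contains infinitely many closed immersed totally geodesic surfaces (just as the figure-eight knot complement does in Example \ref{ex:Fig8}); Menasco--Reid only excludes \emph{embedded} closed surfaces directly from alternation of the diagram. Moreover, ``choosing $\Sigma_i$ generically so that nothing matches across $\Sigma$'' does not address a closed geodesic surface contained entirely in one block and never meeting $\Sigma$, and ``inspecting the finitely many survivors'' is not an argument that they are noncompact. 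The paper obtains this part from \cite[Thm.~1]{MenascoReid} applied to alternating belted sums such as $7_6^2$, which have no closed immersed totally geodesic surfaces at all.
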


In short, our examples are \emph{belted sums} of arithmetic links. See \S \ref{sec:Exs} for more on various examples and nonexamples, along with connections between our work and the Menasco--Reid conjecture \cite{MenascoReid}.

\medskip

As mentioned above, Theorem \ref{thm:SimpleMain} is a special case of a more general theorem on geodesic submanifolds of arbitrary codimension. To state this result we require some more notation. A \emph{building block} of a hyperbolic $n$-manifold $M$ is a connected $n$-dimensional submanifold $N$ with nonempty (but possibly disconnected) totally geodesic boundary such that $\pi_1(N) < \pi_1(M)$ is Zariski dense in $\Isom^+(\Hy^n)$. In the finite-volume noncompact case, we require all cusps of $N$ to be finite-volume.

We say a hyperbolic manifold $M$ is \emph{built from building blocks} when it is the union of finitely many building blocks with disjoint interiors, hence they intersect only along their boundaries. That is, $M$ is obtained by gluing finitely many building blocks together isometrically along their boundaries. We call two such blocks \emph{adjacent} if they meet in $M$ along a common boundary component. We will frequently refer to any connected component of this common boundary as a \emph{cutting hypersurface}.

We say that a building block $N \subset M$ is \emph{arithmetic} when there exists an arithmetic hyperbolic $n$-manifold $M^\prime$ such that $N$ is isometric to a building block for $M^\prime$. We will see that, up to commensurability, $M^\prime$ is uniquely determined by $N$. In particular, it makes sense to say that two arithmetic building blocks are \emph{similar} when they are building blocks for commensurable arithmetic manifolds and are \emph{dissimilar} otherwise. We will also call a geodesic submanifold {\emph{maximal} if it is not contained in a proper geodesic submanifold of smaller codimension. Our main result is:

\begin{thm}\label{thm:Main}
Let $M$ be a non-arithmetic hyperbolic $n$-manifold built from building blocks for which there are two adjacent building blocks $N_1$ and $N_2$ that are arithmetic and dissimilar. For each $1 \le k \le n-2$, the collection of all maximal codimension $k$ finite-volume totally geodesic submanifolds of $M$ is finite.
\end{thm}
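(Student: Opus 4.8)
\emph{Proof strategy.} The plan is to argue by contradiction. Fix $k$ with $1 \le k \le n-2$ and set $d = n-k \ge 2$; suppose that $M$ has infinitely many pairwise distinct maximal codimension $k$ totally geodesic submanifolds $W_1, W_2, \dots$. Write $M = \Gamma \backslash \Hy^n$ with $\Gamma < G := \Isom^+(\Hy^n)$ a lattice, and let $X = \Gamma \backslash G$, the orthonormal frame bundle of $M$. Each $W_i$ is covered by a totally geodesic $\Hy^d \subset \Hy^n$ and, since $W_i$ has finite volume, the $\Gamma$-stabilizer of that $\Hy^d$ is a lattice in its stabilizer $H_i \le G$ (a subgroup locally isomorphic to $\Isom^+(\Hy^d) \times \mathrm{SO}(n-d)$); hence the natural lift of $W_i$ to $X$ is a closed, finite-volume $H_i$-orbit supporting an $H_i$-invariant probability measure $\mu_i$. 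Because $d \ge 2$, the group $H_i$ contains nontrivial one-parameter unipotent subgroups, so the $\mu_i$ are governed by the theory of unipotent flows.

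The first step is to show that the $W_i$ equidistribute. Invoking the non-divergence estimates of Dani--Margulis and Eskin--Mozes--Shah to preclude escape of mass in the (possibly cusped) finite-volume manifold $M$, I would apply the theorem of Mozes--Shah: after passing to a subsequence, $\mu_i \to \mu$ weak-$*$, where $\mu$ is a homogeneous probability measure supported on a closed orbit $Hx_0$ with $H_i \le H \le G$ after suitable conjugation, and with $\operatorname{supp}(\mu_i) \subseteq \operatorname{supp}(\mu)$ for all large $i$. Projecting to $M$, $\operatorname{supp}(\mu)$ descends to a geodesic submanifold $Y \subseteq M$ that eventually contains all of the $W_i$. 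If $Y \ne M$ then $Y$ is a proper geodesic submanifold: when $\dim Y > d$ this contradicts maximality of $W_i$, and when $\dim Y = d$ the inclusion $W_i \subseteq Y$ of equidimensional connected totally geodesic submanifolds forces $W_i = Y$ for all large $i$, contradicting distinctness. Hence $Y = M$, i.e.\ $H = G$, so $\mu_i$ converges to the $G$-invariant probability measure on $X$: the lifts of the $W_i$ equidistribute in the frame bundle.

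Next I would convert equidistribution into forced transverse interaction with the gluing locus. Let $\Sigma$ be a cutting hypersurface along which the adjacent, arithmetic, dissimilar blocks $N_1$ and $N_2$ are glued; choose $p \in \Sigma$ and a totally geodesic $d$-plane $P$ through $p$ transverse to $\Sigma$. Equidistribution in the frame bundle produces, for all large $i$, a point of $W_i$ arbitrarily close to $p$ at which the tangent $d$-plane of $W_i$ is $C^1$-close to $P$; since $W_i$ is totally geodesic, $W_i$ then crosses $\Sigma$ transversally near $p$. Thus, for $i$ large, $W_i \cap N_j$ is a nonempty totally geodesic submanifold-with-boundary of the arithmetic block $N_j$ for $j = 1,2$, and $W_i \cap \Sigma$ is a nonempty totally geodesic submanifold of $\Sigma$ of dimension $d - 1 \ge 1$. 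In particular, no $W_i$ (for $i$ large) is contained in a single building block.

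The crux --- and the step I expect to be the main obstacle --- is to rule out the existence of such a crossing geodesic submanifold $W$ of dimension $d \ge 2$, or at least to show that only finitely many configurations occur, using dissimilarity of $N_1$ and $N_2$. Extending $W \cap N_j$ to a complete geodesic submanifold of the arithmetic hyperbolic manifold $M_j$ for which $N_j$ is a building block, and using that a totally geodesic submanifold of an arithmetic hyperbolic manifold is again arithmetic, the configuration attaches to $W$ arithmetic data --- invariant trace fields and the associated quadratic or quaternion data --- inherited from $M_1$ along $N_1$ and from $M_2$ along $N_2$. Since the two extensions are glued along the common geodesic hypersurface $\Sigma$ shared by $M_1$ and $M_2$, and since the relevant subgroups sit nested as $\pi_1(W \cap \Sigma) < \pi_1(\Sigma) < \pi_1(M_j)$ and $\pi_1(W\cap N_j) < \pi_1(M_j)$, I would argue that this data must be mutually consistent and that consistency forces a commensurability between the arithmetic invariants of $M_1$ and $M_2$, contradicting dissimilarity. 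Turning this heuristic into a proof --- by carefully tracking commensurability classes of the nested arithmetic groups (invariant trace fields, similarity classes of quadratic forms, and the finer Galois/algebra invariants that separate commensurability classes in $\Isom^+(\Hy^n)$) --- is the technical heart of the argument. This is also where the hypothesis $d \ge 2$ (equivalently $k \le n-2$) is indispensable: for $d = 1$ the relevant flow is the geodesic flow, which is hyperbolic rather than unipotent, so the first step fails, and the arithmetic invariants of a single geodesic are too weak to run the last step --- consistent with the fact that closed geodesics always occur in infinite number.
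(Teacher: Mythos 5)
Your dynamical step is sound and is in fact a genuinely different route from the paper's: you use Mozes--Shah convergence of the invariant measures $\mu_i$ (plus non-divergence) to equidistribute infinitely many maximal $W_i$, whereas the paper runs Dani--Margulis's theorem on time averages of unipotent trajectories and then explicitly analyzes the finitely many exceptional sets $X(H,W_m)$, showing each is one or two closed geodesic submanifolds. Both rest on unipotent rigidity and both use maximality in the same way, so up through "the $W_i$ equidistribute in the frame bundle and hence cross $\Sigma$" your proposal is fine.

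The genuine gap is in your final step, and it is not merely technical: the contradiction you are aiming for is unobtainable. You propose to show that the mere existence of a crossing geodesic submanifold $W$ of dimension $d\ge 2$ forces commensurability of $M_1$ and $M_2$, contradicting dissimilarity. But crossing submanifolds do exist in these examples --- Theorem \ref{thm:SimpleMain} asserts the collection is finite but \emph{nonempty}. What dissimilarity actually forces is Angle Rigidity (Theorem \ref{thm:GPSFiniteness}): writing $q_i=\langle\alpha_i\rangle\oplus q$, the gluing map $\Phi$ rescales only the coordinate normal to the common hypersurface by $\sqrt{\alpha_2/\alpha_1}$, so a $K$-defined subspace on one side matches a $K$-defined subspace on the other either when $\alpha_2/\alpha_1$ is a square (i.e.\ $M_1\sim M_2$) \emph{or} when the subspace contains the normal vector $(1,0,\dots,0)$, i.e.\ when $W$ meets $\Sigma$ orthogonally. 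So the correct output of your "consistency" analysis is orthogonality, not non-existence, and the contradiction must instead come from equidistribution producing a \emph{non-orthogonal} crossing. That requires two things you have not supplied: (i) choose your reference plane $P$ transverse but non-orthogonal to $\Sigma$, and prove that a totally geodesic $W_i$ passing near $p$ with tangent plane close to $P$ actually intersects $\Sigma$ at a non-right angle at some nearby point --- this is the paper's Proposition \ref{prop:boundaryprop}, whose proof needs a genuine hyperbolic-triangle argument because the nearby intersection point is not the foot of the perpendicular; and (ii) Closure Rigidity (Theorem \ref{thm:ClosureRigidity}), i.e.\ that $W\cap N_j$ is the trace of a finite-volume geodesic submanifold of the arithmetic manifold $M_j$, which is where $d\ge 2$ enters and which requires real work (finding elements of $\pi_1(W\cap N_j)$ not conjugate into the boundary, and a separate treatment of surfaces meeting $\Sigma$ in closed or cusp-to-cusp geodesics). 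As written, your proposal defers exactly these two rigidity statements and aims the arithmetic analysis at a false target.
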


Note that the word maximal in the theorem is necessary, since codimension $k$ finite-volume totally geodesic submanifolds may be arithmetic and thus could contain infinitely many finite volume totally geodesics submanifolds of all greater codimensions. This occurs quite frequently in explicit constructions, particularly those built by Gromov and Piatetski-Shapiro.

Also note that the manifolds constructed by Gromov and Piatetski-Shapiro are built from two dissimilar buildings blocks (see \S \ref{ssec:GPS}), so Theorem \ref{thm:Main} applies. The theorem also applies to the manifolds used to study invariant random subgroups in \cite{SevenSamuraiShort} and to those used by Raimbault \cite{Raimbault} and Gelander--Levit \cite{GeLev} in studying growth of the number of maximal lattices in $\SO(n,1)$. These lattices are all built from \emph{subarithmetic pieces} in the language of Gromov and Piatetski-Shapiro \cite[Qn.\ 0.4]{GPS}.  We state one immediate corollary of our result and the results of Gelander--Levit more carefully:

\begin{thm}
Fix $n>3$.  Then there exists a number $b_n>0$ such that the number of commensurability classes of hyperbolic manifolds satisfying the conclusions of Theorem \ref{thm:Main} and having a representative of volume less than $V$ is proportional to $V^{b_n V}$.
\end{thm}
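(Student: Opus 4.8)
The plan is to obtain this purely as a counting statement, combining Theorem~\ref{thm:Main} with two results in the literature: the super-exponential lower bound of Gelander--Levit \cite{GeLev} on the number of commensurability classes of non-arithmetic hyperbolic $n$-manifolds of bounded volume, and the quantitative form of Wang's finiteness theorem in dimension $n \ge 4$ (Burger--Gelander--Lubotzky--Mozes), which provides a matching upper bound for the number of \emph{all} finite-volume hyperbolic $n$-manifolds of bounded volume. Since $n>3$ we are in the regime where both of these apply.

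For the upper bound: if a commensurability class of hyperbolic $n$-manifolds has a representative of volume less than $V$, then it contains some finite-volume hyperbolic $n$-manifold of volume less than $V$, and distinct classes give distinct manifolds. Hence the number of such commensurability classes is at most the number of isometry classes of finite-volume hyperbolic $n$-manifolds of volume less than $V$, which is at most $V^{cV}$ for a constant $c=c(n)$ and all large $V$ by Burger--Gelander--Lubotzky--Mozes. This bound holds a fortiori for the subcollection picked out in the statement.

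For the lower bound I would invoke the Gelander--Levit family directly. As already noted in \S\ref{sec:Intro}, the hyperbolic $n$-manifolds that Gelander--Levit build are assembled from subarithmetic pieces in the sense of Gromov--Piatetski-Shapiro; concretely these are glued from arithmetic building blocks in the sense of \S\ref{sec:Intro}, and --- exactly as in the Gromov--Piatetski-Shapiro construction (\S\ref{ssec:GPS}) --- the gluing can be arranged so that two \emph{adjacent} blocks are arithmetic and dissimilar, which is the mechanism forcing non-arithmeticity of the result. Thus Theorem~\ref{thm:Main} applies to every manifold in their family. Gelander--Levit produce, for all large $V$, at least $V^{b_n V}$ pairwise-incommensurable such manifolds of volume at most $V$; each determines a distinct commensurability class satisfying the hypotheses, and hence the conclusion, of Theorem~\ref{thm:Main}. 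Together with the previous paragraph this establishes the claimed growth: for all large $V$ the number in question lies between $V^{b_n V}$ and $V^{cV}$.

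The only point that requires genuine care is the compatibility claim --- that the pieces in the Gelander--Levit gluing really are arithmetic building blocks as defined in \S\ref{sec:Intro}, in particular that each has nonempty totally geodesic boundary and, in the noncompact case, only finite-volume cusps, and that the underlying gluing pattern can be chosen to contain an edge joining two dissimilar arithmetic blocks. Once that is checked, nothing further is needed: the counting is immediate from the two quoted estimates, and all of the geometric and dynamical content is already contained in Theorem~\ref{thm:Main}. I expect verifying this compatibility, rather than the counting itself, to be the main (and essentially bookkeeping) obstacle.
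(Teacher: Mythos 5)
Your proposal is correct and matches the paper's intent: the result is stated there as an immediate corollary of Theorem \ref{thm:Main} together with the Gelander--Levit counting results, with the compatibility of their building blocks with the hypotheses of Theorem \ref{thm:Main} verified by the explicit description of their construction in \S\ref{ssec:GPS}. Your explicit appeal to the Burger--Gelander--Lubotzky--Mozes upper bound is a reasonable way to make the informal ``proportional to $V^{b_n V}$'' precise, and is consistent with how Gelander--Levit themselves phrase their asymptotics.
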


\noindent As in Gelander--Levit, it is possible to see from this that there are ``more" manifolds satisfying the conclusions of Theorem \ref{thm:Main} than there are arithmetic manifolds. In fact, the set of such manifolds has positive logarithmic density if one counts by minimal volume in the commensurability class.

\medskip

We note here that our proofs do not apply to the class of hyperbolic manifolds built by inbreeding that were introduced by Agol \cite{Agol} and developed further by Belolipetsky and Thomson \cite{BelThom}. Our approach depends at a key point on the adjacent pieces being associated with different arithmetic groups, and in this family of examples there is only one arithmetic group involved. In other words, in our terminology the building blocks used to construct the examples in \cite{Agol} and \cite{BelThom} are similar, hence Theorem \ref{thm:Main} does not apply.

\medskip

\noindent
The proof of Theorem \ref{thm:Main} is a mix of structure theory for arithmetic groups, dynamics, and geometry in negative curvature.

\medskip

Our proof involves three key ingredients.  The first two are a pair of `rigidity' theorems for geodesic submanifolds of manifolds $M$ satisfying the properties of Theorem \ref{thm:Main}. Each shows that geodesic submanifolds of $M$ that meet both $N_1$ and $N_2$ must be of a very particular kind. The third ingredient consists of equidistribution results from homogeneous dynamics.

The first ingredient is `Closure Rigidity', which we prove in \S \ref{sec:Closure}. Given a geodesic submanifold $\Sigma$ that crosses both $N_1$ and $N_2$, we show that $\Sigma \cap N_i$ is actually the intersection with $N_i$ of a geodesic submanifold in the finite volume arithmetic manifold from which $N_i$ is cut.  This is not true for geodesics, and in fact a generic geodesic in the examples we consider is built from pieces that do not close in the corresponding arithmetic manifold.

The second ingredient and rigidity result is `Angle Rigidity', proven in \S \ref{sec:Angle}. This says that once we know the closure rigidity results from \S \ref{sec:Closure}, geodesic submanifolds must meet $N_1 \cap N_2$ orthogonally.

The last key ingredient in the proof is a generalization of Shah's work on orbit closures in hyperbolic manifolds \cite{Shah}, which we prove in \S \ref{section:nimish}. This allows us to conclude that either the collection of codimension $k$ geodesic submanifolds of $M$ satisfies the conclusions of Theorem \ref{thm:Main}, or they determine a dense subset of the oriented orthonormal frame bundle of $M$. The combination of closure and angle rigidity forbids the second conclusion, hence the theorem follows.

There is some overlap of ideas between our paper and a recent paper of Benoist and Oh \cite{BenoistOh}.  In fact, after we showed our paper to Oh, she pointed out that in dimension 3 one can prove our Theorem \ref{thm:SimpleMain} using their Proposition 12.1 in place of our Theorem \ref{thm:GPSFiniteness}.  The proof in their paper does not cover the examples needed for Theorem \ref{thm:LinkMain}.  See Remark \ref{rem:BenoistOh} for the relation between our work and theirs.

\medskip

\noindent We close with some discussion of the following famous question asked by Gromov and Piatetski-Shapiro.

\begin{qtn}[Qn.\ 0.4 \cite{GPS}]\label{qtn:GPS}
Call a discrete subgroup $\Ga_0 < \PO(n,1)$ \emph{subarithmetic} if $\Ga_0$ is Zariski dense and if there exists an arithmetic subgroup $\Ga_1 < \PO(n,1)$ such that $\Ga_0 \cap \Ga_1$ has finite index in $\Ga_0$. Does every lattice $\Ga$ in $\PO(n,1)$ (maybe for large $n$) contain a subarithmetic subgroup? Is $\Ga$ generated by (finitely many) such subgroups? If so, does $V = \Hy^n / \Ga$ admit a ``nice'' partition into ``subarithmetic pieces''.
\end{qtn}

Manifolds built from arithmetic building blocks certainly admit a nice partition into subarithmetic pieces. In a different direction, in the 1960s Vinberg introduced the notion of a \emph{quasi-arithmetic} lattice \cite{Vinberg3} (see \S \ref{subsec:Coxeter}). For example, many lattices acting on $\Hy^n$ generated by reflections are quasi-arithmetic. However, the typical variants of Gromov and Piatetski-Shapiro's construction of non-arithmetic lattices are not quasi-arithmetic, so Question \ref{qtn:GPS} is well-known to be more subtle than ``are all non-arithmetic lattices iterations of the construction in \cite{GPS}''.

A key step in the proof that the examples in \cite{GPS} are non-arithmetic is the assumption that one interbreeds noncommensurable arithmetic manifolds. More recently, Agol \cite{Agol} (for $n=4$) and Belolipetsky--Thomson \cite{BelThom} (for $n \ge 4$) constructed non-arithmetic but quasi-arithmetic lattices by inbreeding commensurable arithmetic lattices. In particular, the existence of quasi-arithmetic lattices that are non-arithmetic does not rule out the possibility that all non-arithmetic lattices for $n \ge 4$ are constructed by inbreeding arithmetic lattices (there are hyperbolic $3$-manifolds with no totally geodesic surfaces, which clearly are not constructed via interbreeding or inbreeding; see Example \ref{ex:NoSurfacesNA}).

With the methods of this paper, we can give examples of Coxeter
polyhedra whose associated non-arithmetic lattices are not commensurable with either the Gromov--Piatetski-Shapiro type lattices or to the Agol type lattices. See \S \ref{subsec:Coxeter}, where we will prove the following, which perhaps further clarifies the subtle nature of Question \ref{qtn:GPS}.

\begin{thm}\label{thm:MainCoxeter}
There exist non-arithmetic lattices in $\SO(5,1)$ that are not commensurable with a lattice constructed by the methods of Gromov--Piatetski-Shapiro or Agol.
\end{thm}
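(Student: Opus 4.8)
The plan is to exhibit an explicit Coxeter polyhedron $P \subset \Hy^5$, equipped with a hyperplane $H$ cutting $P$ into two halves $P_1$ and $P_2$, arranged so that: the reflection group $\Gamma = \Gamma_P < \SO(5,1)$ is non-arithmetic; each $P_i$, completed across its facets other than the one lying in $H$ (equivalently, the group generated by the reflections in those facets, suitably doubled), generates an arithmetic lattice $\Gamma_i$; and $\Gamma_1$, $\Gamma_2$ are incommensurable. Non-arithmeticity of $\Gamma$ is certified by Vinberg's arithmeticity criterion applied to the Gram matrix of $P$: some cyclic product of off-diagonal Gram entries fails to be an algebraic integer in the totally real field it should generate, or the ambient field is not totally real after the required conjugation. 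Arithmeticity of each $\Gamma_i$ is checked the same way, and their dissimilarity by comparing adjoint trace fields together with the Hasse–Witt and ramification invariants of the associated quadratic forms of signature $(5,1)$. Granting all this, $M = \Hy^5/\Gamma$ (replaced by a torsion-free finite-index manifold cover if one prefers the manifold setting) is built from building blocks, of which the blocks determined by $P_1$ and $P_2$ are adjacent, arithmetic, and dissimilar; hence Theorem~\ref{thm:Main} applies to $M$.

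The next step is to rule out commensurability with an Agol-type lattice. The examples of Agol and of Belolipetsky–Thomson are quasi-arithmetic in Vinberg's sense, and quasi-arithmeticity is a commensurability invariant: the adjoint trace field $k$ and the $k$-similarity class of the ambient quadratic form are determined by the commensurability class. So it suffices to show $\Gamma$ is not quasi-arithmetic. This follows from dissimilarity of its two arithmetic building blocks together with the fact that, up to commensurability, the ambient arithmetic manifold attached to a building block is unique: a hypothetical common defining form $f/k$ for all of $\Gamma$ would have to be $k$-similar to the ambient forms of both $\Gamma_1$ and $\Gamma_2$, forcing $\Gamma_1$ and $\Gamma_2$ to be similar, a contradiction. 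Equivalently, one evaluates Vinberg's quasi-arithmeticity criterion directly on the Gram matrix of $P$ and finds it violated.

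The remaining, and most delicate, point is non-commensurability with every Gromov–Piatetski-Shapiro–type lattice, including all iterated interbreedings. The finiteness conclusion of Theorem~\ref{thm:Main} does not by itself separate the two families, since a GPS manifold is itself a hybrid of dissimilar arithmetic blocks. Instead I would extract a commensurability invariant from the rigidity theory: by Closure Rigidity (\S\ref{sec:Closure}) and Angle Rigidity (\S\ref{sec:Angle}), the unordered pair of commensurability classes of the arithmetic manifolds from which the two adjacent blocks are cut — together with the $\Isom(\Hy^4)$-conjugacy data describing how their totally geodesic boundary hypersurfaces are identified along the cutting hypersurface — is an invariant of the commensurability class of $M$. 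In a GPS construction the two arithmetic pieces are, by definition, cut along a common totally geodesic hypersurface, which imposes a compatibility between $\Gamma_1$ and $\Gamma_2$: their ambient quadratic forms must share a common codimension-one subform, so in particular they agree outside a controlled finite set of places. I would then choose $P$ precisely so that the pair $(\Gamma_1,\Gamma_2)$ violates this compatibility — e.g. so that the codimension-one geodesic subgroup realized along $H$ carries ramification data that cannot simultaneously embed, in the GPS fashion, as a subform of the ambient forms of both $\Gamma_1$ and $\Gamma_2$ — thereby excluding commensurability with any GPS example and, via the quasi-arithmeticity obstruction above, with any Agol example.

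I expect the main obstacle to be exactly this last step: distilling the output of Closure and Angle Rigidity into a genuinely computable commensurability invariant of a hybrid, and then certifying, by an explicit computation with Gram matrices and quadratic-form invariants over the relevant number field, that the value of that invariant for the chosen Coxeter polyhedron is not realized by any member of the GPS or Agol families. A secondary but nontrivial difficulty is the bookkeeping required to verify, for one concrete polyhedron in $\Hy^5$, all three arithmetic conditions at once — non-arithmeticity of $\Gamma$, arithmeticity of $\Gamma_1$ and $\Gamma_2$, and their dissimilarity — which amounts to a careful application of Vinberg's criteria and a Hasse-principle computation for rank-$6$ quadratic forms.
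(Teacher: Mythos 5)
There is a genuine gap, and it is in the step you yourself flag as the most delicate one: your construction is self-defeating for the purpose of excluding commensurability with Gromov--Piatetski-Shapiro lattices. You propose to choose $P$ so that a hyperplane $H$ splits it into two pieces $P_1,P_2$ whose associated groups are dissimilar arithmetic building blocks glued along a common totally geodesic hypersurface. But that is \emph{precisely} the Gromov--Piatetski-Shapiro construction (this is Vinberg's observation, recalled in \S\ref{subsec:Coxeter}, that interbreeding can be performed directly on Coxeter polyhedra): the resulting $\Gamma_P$ \emph{is} a GPS-type lattice, so it cannot be shown to be incommensurable with every GPS-type lattice. The ``compatibility'' you propose to violate --- that the ambient forms of $\Gamma_1$ and $\Gamma_2$ share a common codimension-one subform along the cutting hypersurface --- is automatically satisfied in your setup, because you glued $\Gamma_1$ and $\Gamma_2$ along exactly such a hypersurface. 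No choice of $P$ of the kind you describe can make that invariant fail.

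The paper's route is the opposite of yours. Call $P$ \emph{splittable} if some hyperplane $H$ meets every face of $P$ either orthogonally or not at all (faces contained in $H$ allowed), and unsplittable otherwise. The key lemma (Lemma~\ref{lem-unsplittable-polyhedron}) is: if $P$ is unsplittable, then $\Gamma_P$ is not commensurable with any GPS-type lattice. The proof passes to a common torsion-free finite-index cover $M$, which would be both a GPS hybrid with cutting hypersurface $\Sigma$ and tessellated by copies of $P$; unsplittability forces some face of some copy of $P$ to meet $\Sigma$ at an angle strictly between $0$ and $\pi/2$, and that face extends to a finite-volume immersed totally geodesic hypersurface crossing $\Sigma$ non-orthogonally, contradicting Angle Rigidity (Theorem~\ref{thm:GPSFiniteness}). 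One then takes the known non-arithmetic non-compact Coxeter $5$-simplex in $\Hy^5$: a simplex is automatically unsplittable, its non-arithmeticity is certified by Vinberg's criterion directly (not by a hybrid decomposition), and the Agol/Belolipetsky--Thomson family is excluded because those lattices are quasi-arithmetic while this simplex group is not (Vinberg), quasi-arithmeticity being a commensurability invariant. Your Agol-exclusion step is in the right spirit, but to repair the argument you must abandon the splittable hybrid polyhedron and instead exhibit an unsplittable one, replacing your ``compatibility invariant'' with the angle-rigidity obstruction above.
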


\subsubsection*{Acknowledgments} Fisher was partially supported by NSF Grants DMS-1308291 and DMS-1607041. Lafont was partially supported by the NSF Grant Number DMS 1510640. Stover was partially supported by the NSF Grant Number DMS 1361000 and Grant Number 523197 from the Simons Foundation/SFARI. The authors acknowledge support from U.S. National Science Foundation grants DMS 1107452, 1107263, 1107367 ``RNMS: GEometric structures And Representation varieties'' (the GEAR Network).

\section{Background on constructions of hyperbolic $n$-manifolds}\label{sec:Background}

This section gives the necessary background on the constructions of hyperbolic $n$-manifolds that are relevant for this paper. It also establishes much of the notation that we will use throughout.

\subsection{Arithmetic manifolds constructed from quadratic forms}

Here we recall the construction of arithmetic hyperbolic manifolds via quadratic forms over number fields. Our standing assumption throughout will be that $K$ is a totally real number field of degree $s$ over $\Q$ with distinct real embeddings $\sigma_1,\dots,\sigma_s : K \to \R$. To simplify our discussion, we assume that $K$ is a subfield of $\R$ under $\sigma_1=\id$.

Now, let $V$ be a $K$-vector space of dimension $n+1$ and $q$ a nondegenerate $K$-defined quadratic form on $V$. Such a pair $(V,q)$ is called a $K$-\emph{quadratic space}, and associated with $q$ we have the inner product
\begin{equation}\label{eqn:InnerProd}
\langle u,v\rangle_q=\frac{1}{2}\left(q(u+v)-q(u)-q(v)\right).
\end{equation}
We then have the $K$-algebraic group $\SO(q,K)$ consisting of those determinant one linear transformations of $V$ that preserve $q$. We use
\[
V(\R) = V \otimes_K \R \cong \R^{n+1},
\]
to denote the extension of $V$ to a real vector space under our chosen real embedding of $K$, and $q$ will denote the unique extension of $q$ to a quadratic form on $V(\R)$. For the nonidentity $\sigma_i$, $q^{\sigma_i}$ will denote the extension of $q$ to the real vector space $V \otimes_{\sig_i(K)} \R$.

We assume throughout this paper that $q$ has signature $(n,1)$ on $V(\R)$ and that $q^{\sig_i}$ is definite for all $i\neq 1$. Given this setup, there is a natural injection
\begin{align*}
\mathfrak{i}:\SO(q,K)&\hookrightarrow\prod_{i=1}^s\SO(q^{\sigma_i},\R), \\
&\cong\SO(n,1)\times\prod_{i=2}^s\SO(n+1),
\end{align*}
sending $g \in \SO(q, K)$ to
\[
\mathfrak{i}(g) = (g,\sigma_2(g),\dots,\sigma_s(g)) \in \SO(n,1)\times\prod_{i=2}^s\SO(n+1).
\]
In other words, we identify $g \in \SO(q,K)$ with its image in $\SO(n,1)$ under projection onto the first factor of the above product.

Now, let $\calO_K$ denote the ring of integers of $K$. The above identifications map $\SO(q, \calO_K)$ to an arithmetic lattice $\Ga_q$ in $\SO(n,1)$. All arithmetic lattices in $\SO(n,1)$ considered in this paper will be commensurable with some $\Ga_q$ as above. We note that in even dimensions this construction determines every commensurability class of arithmetic lattices in $\SO(n,1)$.

\medskip

Given a quadratic form $q$ as above, we form the hyperboloid model of hyperbolic space associated with $q$ by
\[
\Hy_{q}^n=\{(x_1,\dots,x_{n+1})\in\R^{n+1}\mid q(x_1,\dots,x_{n+1})=-1,~x_{n+1}>0\}.
\]
The inner product on $\R^{n+1}$ defined by \eqref{eqn:InnerProd} determines the usual hyperbolic metric on $\Hy_q^n$. From here forward, when there is no possibility of confusion we will drop the subscript and identify $\Hy_q^n$ with $\Hy^n$.

A finite-volume hyperbolic orbifold is given as a quotient of $\Hy^n$ by a lattice $\Gamma$ in $\Isom(\Hy^n) \cong \PO(n,1)$. The group $\Isom^+(\Hy^n)$ of orientation-preserving isometries is isomorphic to the connected component $\SO_0(n,1)$ of the identity in $\SO(n,1)$. If $\Gamma$ is torsion-free then $\Hy^n/\Gamma$ is a manifold, and otherwise it is a hyperbolic orbifold. In particular, the arithmetic lattice $\Ga_q$ described above determines an arithmetic hyperbolic orbifold $\Hy^n/\Ga$. Selberg's lemma then implies that there is a finite index, torsion-free subgroup $\Gamma'<\Gamma$ and hence there is always an arithmetic hyperbolic manifold $\Hy^n/\Gamma'$ that finitely covers $\Hy^n/\Gamma$.

\medskip

For the reader's convenience, we also state the commensurability classification for arithmetic hyperbolic manifolds arising from the above construction. See \cite[2.6]{GPS} and \cite{Meyer} for further details.

\begin{thm}\label{thm:CommClassify}
Let $\Ga, \Ga^\prime$ be two arithmetic lattices in $\SO(n,1)$ constructed from quadratic spaces $(V,q), (V^\prime, q^\prime)$ over the number fields $K, K^\prime$, respectively. Then $\Ga^\prime$ is commensurable with $\Ga$ if and only if $K \cong K^\prime$ and $q^\prime$ is similar to $q$, i.e., there exists $\la \in K^*$ such that $(V^\prime, q^\prime)$ is isometric to $(V, \la q)$.
\end{thm}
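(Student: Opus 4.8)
The plan is to prove Theorem \ref{thm:CommClassify} by translating the commensurability statement into the language of arithmetic subgroups of $\SO(n,1)$ and invoking the general characterization of arithmeticity by a number field and a $\Q$-form (equivalently, by the adjoint $\Q$-algebraic group up to isogeny), then unwinding what this means concretely for quadratic forms. First I would recall the Borel--Harish-Chandra / Margulis picture: two arithmetic lattices in $\SO(n,1)$ are commensurable (up to conjugacy in $\PO(n,1)$, which is automatic here since we work inside a fixed copy of $\SO(n,1)$) if and only if the associated $\Q$-algebraic groups, together with their arithmetic structures, are isomorphic — more precisely, if and only if there is a $\Q$-isomorphism between $\mathrm{Res}_{K/\Q}\SO(q)$ and $\mathrm{Res}_{K'/\Q}\SO(q')$ compatible with the embeddings into $\SO(n,1)$. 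The restriction-of-scalars group $\mathrm{Res}_{K/\Q}\SO(q)$ is $\Q$-simple precisely because $K$ is a field, and its simple factors over $\overline{\Q}$ are indexed by the embeddings $\sigma_i$; the center and the structure of the Dynkin diagram with Galois action recover $K$ as the field over which the group becomes a product of absolutely simple groups permuted transitively. Hence commensurability forces $K \cong K'$ as the first step.

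Next, given $K \cong K'$, I would reduce to comparing $\SO(q)$ and $\SO(q')$ as $K$-algebraic groups. Commensurability of the lattices means these two $K$-groups are $K$-isomorphic (again the compatibility with the real places, which are fixed by our signature conventions, means we do not need to worry about outer twists coming from automorphisms of $K$). Then I would invoke the classical fact that for nondegenerate quadratic forms of rank $n+1 \ge 3$ over a field $K$ of characteristic zero, $\SO(q) \cong \SO(q')$ as algebraic groups over $K$ if and only if $q$ and $q'$ are similar, i.e. $q' \cong \lambda q$ for some $\lambda \in K^*$. The ``only if'' direction here is the substantive classical input: the standard reference is the classification of forms of type $B_\ell$ / $D_\ell$ (e.g. Knus--Merkurjev--Rost--Tignol, or for the orthogonal case the observation that the adjoint group $\mathrm{PGO}^+(q)$ determines the similarity class via Galois cohomology, $H^1(K, \mathrm{PGO}^+(q))$ classifying central simple algebras with orthogonal involution, of which split ones are exactly similarity classes of forms). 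There is the usual low-dimensional caveat: in dimension $n+1 = 4$ (type $D_2 = A_1 \times A_1$) one should be slightly careful, but $\SO(q)$ still determines $q$ up to similarity for $n+1 = 4$; only the half-spin phenomena cause trouble and those do not affect $\SO$ itself. The ``if'' direction is easy: $\lambda q$ and $q$ define the same quadric and the same $\SO$, and scaling $q$ does not change $\SO(q, \calO_K)$ up to commensurability since it does not change the group scheme at all away from finitely many primes.

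The key steps in order: (1) commensurability of the lattices $\Leftrightarrow$ $\Q$-isomorphism of the restriction-of-scalars groups with compatible real embeddings — here one uses that $\SO(n,1)$ has trivial center issues and that, the ambient group being fixed, conjugacy and abstract commensurability coincide, plus Margulis' theorem that the commensurator of a non-arithmetic... no, rather the standard correspondence between arithmetic subgroups and $\Q$-structures; (2) extract $K \cong K'$ from the $\Q$-simple factorization and Galois action; (3) descend to a $K$-isomorphism $\SO(q) \cong \SO(q')$; (4) apply the classification of quadratic forms via their special orthogonal groups to conclude $q' \sim q$; (5) check the converse direction, which is formal. I expect step (1) to require the most care to state cleanly — one must be precise that ``commensurable'' (virtually isomorphic as abstract groups, or commensurable as subgroups of $\SO(n,1)$) really does translate into an isomorphism of $\Q$-groups and not merely an isogeny, and handle the fact that $\mathrm{Spin}$, $\SO$, and $\PO$ give the same commensurability class of lattices; once that is set up, steps (2)--(4) are citations to Borel, Tits, and the theory of algebraic groups with involution (\cite{GPS}, \cite{Meyer}), and step (5) is a one-line observation. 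In the write-up I would simply cite \cite[2.6]{GPS} and \cite{Meyer} for the technical heart, as the theorem is stated ``for the reader's convenience.''
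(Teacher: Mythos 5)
Your outline is essentially the standard argument, and it matches the approach of the sources the paper itself cites for this statement: the paper does not prove Theorem \ref{thm:CommClassify} but records it ``for the reader's convenience'' with references to \cite[2.6]{GPS} and \cite{Meyer}, where exactly this translation (commensurability $\Leftrightarrow$ isomorphism of the $K$-groups $\SO(q)$, $\SO(q')$ compatible with the distinguished real place, followed by the classification of special orthogonal groups up to similarity of forms) is carried out. Your proposal is correct and takes the same route.
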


We now briefly recall the construction of finite-volume codimension $k$ immersed totally geodesic submanifolds of arithmetic hyperbolic manifolds arising from quadratic forms. Let $\Ga$ be an arithmetic lattice in $\Isom(\Hy^n)$ associated with the quadratic space $(V, q)$ over the number field $K$. Let $W \subset V$ be a ($K$-defined) codimension $k$ subspace such that the restriction of $q$ to $W$ has signature $(n-k, 1)$ on $W(\R)$. Then the intersection of $W(\R)$ with $\Hy^n \subset V(\R)$ defines a totally geodesic embedding $f : \Hy^{n - k} \hookrightarrow \Hy^n$. The induced embedding of $\SO(W, q|_W)$ into $\SO(V, q)$ yields an embedding
\[
G_W = \mathrm{S}(\mathrm{O}(k) \times \mathrm{O}(n-k,1)) \cap \SO_0(n, 1) \hookrightarrow \Isom(\Hy^n),
\]
such that $\Ga_W = G_W \cap \Ga$ is a lattice in $G_W$. In particular, $\Hy^{n-k} / \Ga_W$ maps in as an immersed finite-volume totally geodesic submanifold of $\Hy^n / \Ga$. Moreover, it is well-known that all geodesic submanifolds of $\Hy^n / \Ga$ arise from the above construction, (see \cite{Meyer}).

\begin{rem}
It is also known that arithmetic manifolds of the above kind are the only arithmetic manifolds that contain a codimension one finite volume totally geodesic submanifold. In particular, this means that the arithmetic manifolds described above are precisely those that can be used in the constructions of non-arithmetic manifolds that follow.
\end{rem}

\subsection{Constructions of non-arithmetic manifolds following Gromov and Piatetski-Shapiro}\label{ssec:GPS}

In this section, we recall the construction by Gromov and Piatetski-Shapiro of non-arithmetic hyperbolic manifolds \cite{GPS}, along with generalizations by Raimbault \cite{Raimbault} and Gelander--Levit \cite{GeLev}.

\medskip

First, we recall the notion of a \emph{building block} and an \emph{arithmetic building block} from the introduction. As previously, a hyperbolic manifold will always mean a connected and oriented quotient of hyperbolic space with finite-volume. If $M$ is a hyperbolic $n$-manifold, then a building block $N \subset M$ is an $n$-dimensional submanifold with nonempty totally geodesic boundary such that $\pi_1(N) < \pi_1(M)$ is Zariski dense in $\Isom(\Hy^n)$ under the holonomy of the hyperbolic structure on $M$ and such that all cusps of $N$ have full rank. We call $N$ arithmetic when $M$ is an arithmetic hyperbolic manifold. The following is evident from \cite[1.6.A]{GPS}.

\begin{prop}\label{prop:BlockComm}
Suppose that $M$ is an arithmetic hyperbolic manifold and $N \subset M$ is an arithmetic building block. If $N \subset M^\prime$ for $M^\prime$ another arithmetic hyperbolic manifold, then $M^\prime$ is commensurable with $M$.
\end{prop}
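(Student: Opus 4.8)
The statement to be proved is Proposition~\ref{prop:BlockComm}: if $N$ is an arithmetic building block sitting inside two arithmetic hyperbolic manifolds $M$ and $M'$, then $M$ and $M'$ are commensurable. The plan is to reduce this to a statement about the fundamental group of $N$ and the ambient arithmetic groups. First I would pass to the level of holonomy: the hyperbolic structures on $M$ and $M'$ restrict to the \emph{same} hyperbolic structure on $N$ (that is what ``$N$ is isometric to a building block for $M'$'' means), so after conjugating in $\Isom(\Hy^n)$ we may assume $\pi_1(N)$ maps to one and the same Zariski-dense subgroup $\Lambda < \Isom(\Hy^n)$, which is simultaneously a subgroup of $\Gamma = \pi_1(M)$ and of $\Gamma' = \pi_1(M')$, both arithmetic lattices. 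The key point is that $\Lambda$, being a Zariski-dense subgroup of both arithmetic lattices, already ``knows'' the commensurability class.

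The heart of the argument is the rigidity of the arithmetic structure, which is exactly what \cite[1.6.A]{GPS} provides and which I would invoke as follows. An arithmetic lattice $\Gamma$ in $\Isom(\Hy^n)$ is, up to commensurability and conjugation, the integer points of a $\Q$-defined (really, a $K$-defined and restriction-of-scalars) algebraic group; equivalently, the commensurator $\Comm(\Gamma)$ in $\Isom(\Hy^n)$ is the group of $K$-rational points of that algebraic group, by Borel--Harish-Chandra / Margulis. Now a Zariski-dense subgroup $\Lambda$ of an arithmetic lattice $\Gamma$ has the property that its commensurator in $\Isom(\Hy^n)$ contains $\Comm(\Gamma)$, and in fact the trace field and the quaternion/quadratic-form data defining the commensurability class of $\Gamma$ are determined by the adjoint trace field of $\Lambda$ and its invariant algebra — this is the content of the invariant trace field being a commensurability invariant, together with the fact that for a Zariski-dense subgroup it agrees with that of any arithmetic lattice containing it. Since $\Lambda < \Gamma$ and $\Lambda < \Gamma'$ with $\Lambda$ Zariski dense in both, the invariant trace field and invariant algebra of $\Lambda$ coincide with those of $\Gamma$ and with those of $\Gamma'$; hence $\Gamma$ and $\Gamma'$ have the same commensurability invariants, and by the commensurability classification (Theorem~\ref{thm:CommClassify}) they lie in the same commensurability class. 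Therefore $M$ and $M'$ are commensurable.

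Concretely, I would organize the write-up in three steps. Step 1: normalize so that $\pi_1(N) = \Lambda$ is literally a common Zariski-dense subgroup of $\Gamma$ and $\Gamma'$, using that a building block of $M$ determines its hyperbolic structure on $N$ and hence the holonomy up to conjugacy. Step 2: recall that for an arithmetic lattice $\Gamma < \Isom(\Hy^n)$ associated to a quadratic space $(V,q)/K$, the pair $(K, [q])$ (field, similarity class of the form) is recovered from the abstract group $\Gamma$ — indeed from any finite-index or, more to the point, any Zariski-dense subgroup — via the invariant trace field and the associated invariant quadratic form / Clifford algebra; cite \cite{GPS} (1.6.A), and \cite{Meyer} for the quadratic-form formulation. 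Step 3: apply Step 2 to $\Lambda$ viewed inside $\Gamma$ and inside $\Gamma'$ to get $(K,[q]) = (K',[q'])$, then conclude by Theorem~\ref{thm:CommClassify}.

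The main obstacle — and the reason this is a genuine lemma rather than a triviality — is Step 2: one must know that the commensurability invariants of an arithmetic lattice are detected by an arbitrary Zariski-dense subgroup, not merely by a finite-index subgroup. This is where Zariski density in the building-block hypothesis is essential (a thin subgroup of the wrong Zariski closure would carry no such information), and it is exactly the input that \cite[1.6.A]{GPS} is being cited for; so in practice the proof is short, amounting to unwinding definitions and quoting that result together with Theorem~\ref{thm:CommClassify}. I would also remark that this simultaneously justifies the introduction's claim that the arithmetic manifold $M'$ from which an arithmetic building block is cut is well-defined up to commensurability, and hence that ``similar'' versus ``dissimilar'' is well-posed.
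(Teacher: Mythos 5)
Your argument is correct and is essentially the paper's own: the paper gives no proof beyond asserting the proposition is ``evident from \cite[1.6.A]{GPS}'', and your write-up is precisely the unwinding of that citation — a common Zariski-dense subgroup of two arithmetic lattices forces their defining data $(K,[q])$ to agree, whence commensurability via Theorem~\ref{thm:CommClassify}. No discrepancy to report.
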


In particular, if $N_1, N_2$ are arithmetic building blocks with associated arithmetic hyperbolic manifolds $M_1$ and $M_2$, it makes sense to say that they are \emph{similar} if $M_1$ and $M_2$ are commensurable and \emph{dissimilar} otherwise. We now return to constructing non-arithmetic hyperbolic manifolds.

\medskip

The most basic Gromov--Piatetski-Shapiro construction is a hybrid manifold defined by interbreeding two incommensurable arithmetic hyperbolic manifolds. In our language, one builds a hyperbolic manifold consisting of two dissimilar arithmetic building blocks. Specifically, suppose that $M_1$ and $M_2$ are incommensurable arithmetic hyperbolic $n$-manifolds and that $\Sigma$ is a hyperbolic $(n-1)$-manifold that admits totally geodesic embeddings
\[
f_i : \Sigma \hookrightarrow M_i,
\]
$i = 1,2$. Suppose that $f_i(\Sigma)$ separates $M_i$, and choose one side $N_i$. This is then a hyperbolic manifold with connected totally geodesic boundary isometric to $\Sigma$, and $N_i$ is an arithmetic building block in the sense described above. Gluing $N_1$ to $N_2$ along their boundaries then defines a non-arithmetic manifold comprised of two incommensurable arithmetic building blocks.

\medskip

Further variants of this construction are given by Raimbault \cite{Raimbault} and Gelander--Levit \cite{GeLev}, who use a generalization of this idea to provide asymptotic counts on the number of hyperbolic manifolds in terms of volume. We describe the constructions so the reader can explicitly see that they satisfy the assumptions of Theorem \ref{thm:Main}.

\medskip

Raimbault's variant of the Gromov--Piatetski-Shapiro construction glues together arithmetic building blocks in a circular pattern. More specifically, start with a family of $r$ arithmetic building blocks $N_1,\dots, N_r$ that are pairwise dissimilar and such that $\partial N_j$ has two connected components, each being isometric to some fixed hyperbolic $(n-1)$-manifold $\Sigma$ (with opposite orientations). Let $\Sigma_j^+$ (resp.\ $\Sigma_j^-$) denote the positively (resp.\ negatively) oriented boundary component. One then glues $N_j$ to $N_{j+1}$ by identifying $\Sigma_j^+$ to $\Sigma_{j+1}^-$, $1 \le j \le r-1$, and $N_r$ to $N_1$ by $\Sigma_r^+$ to $\Sigma_1^-$ to obtain a non-arithmetic hyperbolic manifold built from $r$ building blocks.

\medskip

Gelander--Levit also construct large families of non-arithmetic manifolds, in which they start with any 4-regular and 2-colored graph $G$ on $k$ vertices such that every vertex has the same color except for one and such that each edge is given a label from the set $\{a,a^{-1},b,b^{-1}\}$.
They then form a graph of spaces where the vertex set $V$ corresponds to two fixed arithmetic building blocks $\{V_0,V_1\}$ each with totally geodesic boundary with four connected components, each isometric to a fixed hyperbolic $(n-1)$-manifold $\Sigma$.
Moreover, the edge set $E$ corresponds to gluing together two of four arithmetic building blocks from a set $\{A^-,A^+,B^-,B^+\}$ each with totally geodesic boundary with two connected components, each also isometric to the same $\Sigma$ as before (see \cite[Rem.\ 3.2]{GeLev} for details).
Provided the building blocks are dissimilar, this constructs a non-arithmetic hyperbolic manifold.

\section{Closure rigidity}\label{sec:Closure}

The purpose of this section is to show that any totally geodesic submanifold with boundary lying on a cutting hypersurface can be extended to a finite-volume totally geodesic submanifold of the arithmetic manifold associated with each of the adjacent building blocks.

\medskip

Proving this will require the following lemma, whose statement requires a definition. Let $M$ be a finite-volume cusped hyperbolic $n$-manifold. Then a geodesic $\sigma$ in $M$ is called a \emph{cusp-to-cusp geodesic} if it is the image in $M$ of a geodesic in $\widetilde{M} = \Hy^n$ connecting two parabolic fixed points for the action of $\pi_1(M)$ on $\Hy^n$.

\begin{lemma}\label{lem:FiniteVolBoundary}
Let $M$ be a finite-volume hyperbolic $n$-manifold, $\Sigma$ a finite-volume embedded totally geodesic hypersurface, and $N$ a finite-volume immersed totally geodesic $m$-manifold, $m \ge 2$. Suppose $N$ is not contained in $\Sigma$. Then $N \cap \Sigma$ is a union of complete hyperbolic $(m-1)$-manifolds. If either $m \ge 3$ or $\Sigma$ is compact, then each element of this union has finite volume. If $m=2$ and $N \cap \Sigma$ is not finite-volume, then it is a union of closed geodesics and cusp-to-cusp geodesics.
\end{lemma}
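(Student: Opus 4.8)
The plan is to work in the universal cover $\widetilde{M} = \Hy^n$ and analyze the intersection at the level of totally geodesic subspaces, then descend. Lift $\Sigma$ to a totally geodesic hyperplane $H \cong \Hy^{n-1}$ stabilized by a lattice $\Gamma_\Sigma = \Stab_{\pi_1 M}(H)$, and lift $N$ to a totally geodesic $\Hy^m$ stabilized by a group $\Gamma_N$ which is a lattice in the corresponding $\Isom(\Hy^m)$. A connected component of $N \cap \Sigma$ in $M$ pulls back to an intersection $H \cap g\widetilde{N}$ for various translates; since $N$ is not contained in $\Sigma$, any such intersection of distinct totally geodesic subspaces in $\Hy^n$ is itself a totally geodesic subspace $\Hy^{m-1}$ (or lower-dimensional, or empty — but the generic transverse case gives exactly $\Hy^{m-1}$, and one argues the relevant components have this dimension since $N \not\subset \Sigma$ forces the intersection to be a proper totally geodesic subspace of $\widetilde N$ of codimension one). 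So each component of $N \cap \Sigma$ is a quotient of $\Hy^{m-1}$ by the stabilizer $\Lambda = \Gamma_N \cap \Stab(H)$, hence is a complete hyperbolic $(m-1)$-manifold. This handles the first assertion.

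For finiteness of volume, the key point is to identify $\Lambda$ as the stabilizer, inside the lattice $\Gamma_N \subset \Isom(\Hy^m)$, of a totally geodesic hyperplane $\Hy^{m-1} \subset \Hy^m$. First I would observe $N \cap \Sigma$ is itself a totally geodesic hypersurface inside the complete hyperbolic $m$-manifold $N$. When $m \ge 3$, I would invoke the fact — essentially Kazhdan–Margulis / thick–thin together with the structure of cusps in rank one — that a codimension-one totally geodesic submanifold of a finite-volume hyperbolic manifold of dimension $\ge 3$ automatically has finite volume: the cross-sections of cusps of $N$ are flat $(m-1)$-tori (or flat manifolds), and a totally geodesic hyperplane in $\Hy^m$ invariant under a parabolic subgroup either misses the cusp or meets its boundary horosphere in a totally geodesic flat subtorus of codimension one, which is again compact. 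Thus $N \cap \Sigma$ has only finitely many ends, each of finite volume, and finite volume overall. When $\Sigma$ is compact (the other hypothesis), $N \cap \Sigma$ is a closed subset of the compact set $\Sigma$, hence has finite volume regardless of $m$; alternatively $\Lambda$ acts cocompactly on $H$ since $\Gamma_\Sigma$ does.

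The remaining case is $m = 2$, where $N \cap \Sigma$ is a $1$-manifold, i.e.\ a disjoint union of geodesics in the surface $N$, each either closed, a complete line going out an end, or running between ends. If a component is not itself finite volume — i.e.\ not a closed geodesic — then the corresponding lift is a geodesic line $\ell \subset \Hy^2 = \widetilde N$ whose endpoints on $\partial \Hy^2$ are fixed by the intersection with $\Gamma_\Sigma$; I need to show each endpoint is a parabolic fixed point of $\Gamma_N$, so that $\ell$ descends to a cusp-to-cusp geodesic. Here I would argue that an endpoint of $\ell$ is either fixed by a hyperbolic element (in which case $\ell$ is a closed geodesic axis, contradicting non-finiteness) or is a parabolic point, using that $\ell$ is the intersection $H \cap \widetilde N$ and that $\Gamma_\Sigma = \Stab(H)$ is a lattice in $\Isom(\Hy^{n-1})$: the endpoint lies in the limit set of $\Gamma_\Sigma$ restricted appropriately, and a non-closed component exiting a cusp of $N$ must exit into a cusp. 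I expect this $m=2$ endpoint analysis to be the main obstacle — ruling out "irrational" behavior where $\ell$ exits an end of $N$ without limiting on a parabolic point — and it is presumably handled by a compactness/recurrence argument in the cusped geometry of $N$ combined with discreteness of $\Gamma_\Sigma$, showing that the tail of $\ell$ must be asymptotic to a cusp of $N$ whose horospherical cross-section is preserved by the parabolic subgroup fixing that endpoint. The other routine points (that the intersection subspace has the claimed dimension, that $\Lambda$ is indeed a lattice or cocompact lattice in the relevant sub-group) I would treat by the standard dimension count and by restricting the $\Gamma_\Sigma$-action to $H$.
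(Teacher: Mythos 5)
Your overall strategy matches the paper's: the intersection of two complete totally geodesic subspaces is again complete and totally geodesic of codimension one in $N$, so each component of $N \cap \Sigma$ is a complete hyperbolic $(m-1)$-manifold, and finiteness of volume is then a question about the ends. However, the two steps you lean on for the ends are exactly the crux and are not justified as written. For $m \ge 3$ you assert that the cusp cross-section of a component is ``a totally geodesic flat subtorus of codimension one, which is again compact.'' A codimension-one totally geodesic flat submanifold of a flat torus need not be compact (an irrational-slope line in a $2$-torus is the model failure), and the general ``fact'' you invoke --- that a complete codimension-one totally geodesic submanifold of a finite-volume hyperbolic $m$-manifold, $m \ge 3$, automatically has finite volume --- is false without properness: arithmetic hyperbolic $3$-manifolds contain dense, infinite-area immersed geodesic planes. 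What rescues the argument, and what the paper actually uses, is that $N \cap \Sigma$ is the intersection of two closed subsets of $M$, hence properly immersed, and that its cusp cross-section is the intersection of the \emph{compact} flat cusp cross-sections of $N$ (an $(m-1)$-manifold) and of $\Sigma$ (an $(n-2)$-manifold) inside the compact cross-section of $M$. That intersection is a compact flat $(m-2)$-manifold, so each end is this compact piece cross $(0,\infty)$ and has finite volume once $m>2$. Your argument stays entirely inside $N$ and never uses $\Sigma$'s finite volume at this point, which is why the compactness does not follow.

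For $m=2$ you explicitly defer the key point --- that a non-closed component has \emph{both} endpoints at parabolic fixed points --- to a ``presumably handled'' recurrence argument. The paper closes this by the same properness observation: the component is a properly embedded complete geodesic (again because it is the intersection of two closed totally geodesic subsets of $M$), and a properly embedded bi-infinite geodesic in a finite-volume manifold cannot return to the compact core infinitely often in either direction; hence each end eventually exits a cusp and limits on a parabolic point, so the component is a cusp-to-cusp geodesic. In short, the missing ingredient in both places is the same: record that $N \cap \Sigma$ is closed in $M$ and exploit the compactness of the cusp cross-sections of both $N$ and $\Sigma$, rather than appealing to structure internal to $N$ alone.
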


\begin{proof}
Let $M$, $\Sigma$, and $N$ be as in the statement of the lemma, and let $X = N \cap \Sigma$. This intersection is necessarily transverse, since $N$ is not contained in $\Sigma$ and for any $x \in \Hy^n$ there is a unique $\Hy^\ell$ tangent to any $\ell$-plane in $T_x \Hy^n$. Being either complete or totally geodesic is preserved under intersections, so each connected component of $X$ is a properly embedded complete immersed totally geodesic submanifold. If $X$ is a union of compact components,   it is clearly finite volume and we are done.

If $X$ is not compact, then $M$ necessarily has cusps, so we want to show that each component of $X$ has cusps of full rank. In other words, we want to show that its cusps are finite-volume. Indeed, the intersection of $X$ with the compact core of $M$ is compact so this intersection clearly has finite volume. Cusp cross-sections of $X$ are intersections of cusp cross-sections of $N$ and $\Sigma$, which are closed flat $(m-1)$- and $(n-2)$-manifolds, respectively, naturally immersed inside a closed flat $(n-1)$-manifold cusp cross-section of $M$. In particular, this intersection is a closed flat $(m-2)$-manifold. When $m > 2$, this gives us a closed flat $(m-2)$-manifold cross $(0, \infty)$ as the cross-section. This has finite volume, and the lemma is proved in this case.

When $m=2$, we are in the case where our component of $X$ going out the cusp of $M$ is a complete hyperbolic $1$-manifold with cusps, i.e., a geodesic ray going out to the cusp. Since this component of $X$ is complete, it extends in the other direction to a bi-infinite geodesic contained in $N \cap \Sigma$. We claim that the other end of this geodesic also goes to a cusp of $M$. To see this, notice that otherwise such a geodesic could not be properly embedded in $M$, but this is impossible for the intersection of two complete totally geodesic subspaces of $M$. In particular, we see that this component of $X$ must be a cusp-to-cusp geodesic. This proves the lemma.
\end{proof}

\noindent We will also need the following.

\begin{lemma}\label{lem:FindElement}
Let $N$ be a finite volume hyperbolic $m$-manifold with nonempty totally geodesic boundary $\partial N$, and let $N_1, \dots ,N_k$ be the connected components of $\partial N$. If either
\begin{enumerate}

\item at least two components $N_i$ have finite $(m-1)$-dimensional volume, or

\item at least one component $N_i$ has finite $(m-1)$-dimensional volume and $N$ does not deformation retract into that boundary component,

\end{enumerate}
then there exists an element $g\in \pi_1(N)$ that is not conjugate into any of the $\pi_1(N_i)$.
\end{lemma}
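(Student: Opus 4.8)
The statement is purely topological, about the structure of $\pi_1(N)$ as a group acting on a hyperbolic space, so I would work with the universal cover $\widetilde N \subset \Hy^m$, which is a convex subset whose boundary is a disjoint union of copies of hyperbolic hyperplanes (the lifts of the boundary components $N_i$). The key point is that an element $g \in \pi_1(N)$ is conjugate into $\pi_1(N_i)$ if and only if its axis (or, if $g$ is parabolic, its fixed point) is carried by, i.e. stays within bounded distance of, one of these boundary hyperplanes — more precisely if and only if $g$ stabilizes one of the lifts of $N_i$ in $\partial \widetilde N$. So the task reduces to producing a single element of $\pi_1(N)$ whose axis is not asymptotic to any single boundary hyperplane.

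\textbf{Case (1).} Here I would use the two finite-volume boundary components $N_i$ and $N_j$. Pick loxodromic elements $a \in \pi_1(N_i)$ and $b \in \pi_1(N_j)$ (these exist: a finite-volume hyperbolic $(m-1)$-manifold has nontrivial, indeed non-virtually-abelian, fundamental group once $m\ge 2$, so it contains loxodromics). Their axes lie on two \emph{distinct} boundary hyperplanes $\widetilde N_i$ and $\widetilde N_j$ of $\widetilde N$, and these hyperplanes are disjoint (distinct boundary components of $\partial N$ lift to disjoint totally geodesic hyperplanes bounding the convex region $\widetilde N$). A standard ping-pong / Klein-combination argument then shows that for sufficiently large $p,q$ the element $g = a^p b^q$ is loxodromic with attracting and repelling fixed points close to those of $a$ and $b$ respectively — in particular its two fixed points lie near different hyperplanes, so its axis is not within bounded distance of any single boundary hyperplane, hence $g$ is not conjugate into any $\pi_1(N_\ell)$. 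I should check carefully that $g$'s axis genuinely fails to be carried by \emph{every} $N_\ell$ (not just $N_i, N_j$): this follows because a hyperplane carrying the axis would have to contain both of its endpoints at infinity, one near $\partial \widetilde N_i$ and one near $\partial\widetilde N_j$, which — for endpoints sufficiently close to the respective hyperplanes and $\widetilde N_i \ne \widetilde N_j$ disjoint — no single boundary hyperplane of the convex set $\widetilde N$ can do.

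\textbf{Case (2).} Now I have only one finite-volume component $N_i$, plus the hypothesis that $N$ does not deformation retract onto $N_i$. The second hypothesis, via the long exact sequence / Stallings-type reasoning, says that $\pi_1(N_i) \to \pi_1(N)$ is not surjective (if $N$ were aspherical and the inclusion a $\pi_1$-isomorphism it would be a homotopy equivalence, and for manifolds with boundary this would force a deformation retraction). So choose $h \in \pi_1(N) \setminus \pi_1(N_i)$ and a loxodromic $a \in \pi_1(N_i)$. If $h$ is already not conjugate into any $\pi_1(N_\ell)$ we are done; otherwise $h$ is conjugate into some $\pi_1(N_j)$, and after conjugating we may assume $h$ stabilizes a boundary hyperplane $\widetilde N_j \ne \widetilde N_i$, and again $\widetilde N_i, \widetilde N_j$ are disjoint; I can also arrange (replacing $h$ by a power, or by $ha'$ for suitable $a'$) that $h$ is loxodromic with axis on $\widetilde N_j$ or at least with a fixed point off $\widetilde N_i$, and then run the same ping-pong argument with $a$ and $h$ to build $g = a^p h^q$ with fixed points near distinct, disjoint boundary hyperplanes. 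The one subtlety is the degenerate possibility that $\partial N$ has only the single component $N_i$: then $h \notin \pi_1(N_i)$ has axis not on $\widetilde N_i$ (the only boundary hyperplane up to the $\pi_1(N)$-action is a lift of $N_i$, and $h$ does not stabilize the chosen one), and if $h$ is loxodromic we are immediately done, while if $h$ is parabolic or elliptic we replace it by $h a^p$ for large $p$ to get a loxodromic element whose axis has an endpoint near the repelling point of $a$ on $\widetilde N_i$ and the other endpoint off $\widetilde N_i$, hence not carried by $\widetilde N_i$.

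\textbf{Main obstacle.} The routine ping-pong estimates are not where the difficulty lies; the delicate point is the precise translation ``$g$ conjugate into $\pi_1(N_\ell)$ $\iff$ the axis/fixed-point set of $g$ lies on a $\pi_1(N)$-translate of $\widetilde N_\ell$,'' together with ruling out that the axis of the combined element $g$ is carried by \emph{some other} boundary hyperplane we did not start with. This rests on the geometry of the convex core $\widetilde N$: its boundary hyperplanes are pairwise disjoint (two intersecting totally geodesic hyperplanes cannot both bound a common convex region without the region being lower-dimensional), so two points at infinity lying close to two distinct boundary hyperplanes cannot be spanned by a third, and the endpoints of $g$'s axis are pinned near two such distinct hyperplanes by the ping-pong construction. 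I would make this quantitative by fixing the ping-pong neighborhoods small enough that their closures in $\partial \Hy^m$ are disjoint from all but the intended boundary-hyperplane limit sets, using that only finitely many boundary hyperplanes come near any compact region and that the limit sets of the $\widetilde N_\ell$-translates form a closed, nowhere-dense family.
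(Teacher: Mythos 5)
Your proposal is correct and follows essentially the same route as the paper: both pass to the ball model, pick hyperbolic elements whose axes lie on two distinct (hence disjoint) boundary hyperplanes and whose fixed points are non-cusp points, and then combine them into a single hyperbolic element whose two endpoints are pinned near distinct boundary hyperspheres, using the finiteness of boundary hyperspheres of diameter bounded below to rule out the axis lying on any single one. The only cosmetic differences are that the paper invokes a standard limit-set fact to produce the combined hyperbolic element rather than running ping-pong explicitly, and in case (2) it simply takes $g_2 = h g_1 h^{-1}$, which stabilizes the second hyperplane $h\widetilde{N}_1$ and streamlines your case analysis.
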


\begin{proof}
We will fix an isometric embedding of the universal cover $\widetilde{N}$ into the ball model of $\Hy^m$. There is an isometric action of $\De = \pi_1(N)$ on $\widetilde{N}$, which we can extend to an isometric action on $\Hy^m$. The region $\widetilde{N}$ inside $\Hy^m$ has boundary consisting of a countably infinite collection of isometrically embedded copies of $\Hy^{m-1}$, each a lift of one of the boundary components $N_i$. We will call these the \emph{boundary hyperplanes} of the region $\widetilde N$.

All the distinct boundary hyperplanes are pairwise disjoint inside $\Hy^m$, but can have closures inside $\Hy^m \cup \partial^\infty \Hy^m$ that intersect nontrivially. Note that the closures intersect nontrivially when the corresponding point at infinity comes from distinct cusps of $\partial N$ that are asymptotic in $N$. Since we are in the ball model for $\Hy^m$, each of the boundary hyperplanes has a boundary at infinity which determines a round $(m-2)$-sphere inside the standard sphere $S^{m-1}=\partial^\infty \Hy^m$. We will call these the \emph{boundary hyperspheres}. Observe that for any $\epsilon >0$, there are only finitely many  boundary hyperspheres of diameter greater than $\epsilon$, measured in the round metric on $S^{m-1}$.

Let us consider case (1), and assume $N_1, N_2$ have finite volume. Note that the negative curvature assumption implies that $N$ cannot be a cylinder. Each of the corresponding subgroups $\De_i =\pi_1(N_i)$ inside $\De=\pi_1(N)$ is nontrivial and contains an element acting hyperbolically on $\widetilde{N}_i$. Here we are viewing $\widetilde{N}_i$ as a specific boundary hyperplane of the region $\widetilde{N}$. Let $\gamma_i\in \Delta_i$ be such an element and $\gamma_i^\pm \subset S^{m-1}=\partial^\infty \Hy^m$ denote the two limit points of the geodesic determined by $\gamma_i$. Notice that this geodesic lies inside $\widetilde{N}_i$, so the pair of points $\gamma_i^{\pm}$ lie on the boundary hypersphere $\partial^\infty \widetilde{N}_i$ inside $S^{m-1}$. Also, this geodesic is the lift of a closed geodesic in $N_i$, so the points $\gamma_i^\pm$ do not correspond to cusp points in $\partial^\infty \widetilde{N}_i$, and hence cannot lie in any other boundary hyperspheres.

Let $\Lambda \subset S^{m-1}$ be the limit set for the $\De$-action. The pair of distinct points $\gamma_1^+, \gamma_2^+$ lie in $\Lambda$ and
are a positive distance $\delta$ apart. Since they are disjoint, there are only finitely many boundary hyperspheres of diameter greater than $\delta/3$, so we can choose an $\epsilon <\delta/3$ with the property that the open $\epsilon$-balls $B_i \subset S^{m-1}$ centered at $\gamma_i^+$ do not intersect this finite collection of boundary hyperspheres, except potentially $\partial^\infty \widetilde{N}_1$ and $\partial^\infty \widetilde{N}_2$. Then $B_1\cap \Lambda$ and $B_2\cap \Lambda$ are a pair of disjoint open subsets in the limit set $\Lambda$, so by [17, \S\S 8.2F-G], there exists a hyperbolic element $g\in \De$
with $g^+\in B_1$ and $g^-\in B_2$. From our choice of $\epsilon$, the pair of points $g^\pm$ cannot lie on any single boundary hypersphere. This implies that the element $g$ cannot be conjugated into any $\pi_1(N_i)$, as desired.

Now consider case (2), and assume $N_1$ has finite volume and $N$ does not deformation retract into $N_1$. Since $N$, $N_1$ are aspherical, this implies that there exists an $h\in \pi_1(N)\smallsetminus \pi_1(N_1)$. We now consider the two distinct boundary hyperplanes $\widetilde{N}_1$ and $h\, \widetilde{N}_1$. Since $N_1$ has finite volume, we can pick a hyperbolic element $g_1\in \pi_1(N_1)$, and then define another hyperbolic element $g_2 = hg_1h^{-1}$. These stabilize the boundary hyperplanes $\widetilde{N}_1$ and $h\, \widetilde{N}_1$ respectively, so we obtain pairs of limit points $g_1^\pm \in \partial ^\infty \widetilde{N}_1$ and $g_2^\pm \in \partial ^\infty (h\, \widetilde{N}_1)$ in the corresponding boundary hyperspheres. We can then proceed as in case (1) to produce the desired element $g$. This completes the proof of the lemma.
\end{proof}

\noindent We are now ready to state and prove the main result of this section.

\begin{thm}[Closure Rigidity]\label{thm:ClosureRigidity}
Let $M = \Hy^n / \Ga$ be a finite-volume arithmetic hyperbolic $n$-manifold, $n \ge 3$, $\pi : \Hy^n \to M$ be the universal covering, and $\Sigma = \Hy^{n-1} / \Ga_0 \subset M$ be an embedded oriented finite volume totally geodesic hypersurface. For $1< m \le n-1$, suppose that $f : \Hy^m \to \Hy^n$ is a totally geodesic embedding such that the image $(\pi \circ f)(\Hy^m) \subset M$ contains a connected finite-volume $m$-manifold with totally geodesic boundary $N$ such that $\partial N \cap \Sigma \neq \emptyset$ and $N$ does not deformation retract into $\Sigma$. Then $(\pi \circ f)(\Hy^m)$ is an immersed finite-volume totally geodesic submanifold of $M$ of dimension $m$.
\end{thm}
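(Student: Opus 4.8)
The plan is to show that the immersed $m$-manifold $(\pi\circ f)(\Hy^m)$ is finite-volume by exhibiting enough of $\Ga$ stabilizing the totally geodesic copy of $\Hy^m$ to force the stabilizer to be a lattice in $\mathrm{SO}(m,1)$, and then invoke arithmeticity to finish. Let me set up notation: let $Y = f(\Hy^m) \subset \Hy^n$ be the totally geodesic plane, let $\Lambda = \Stab_\Ga(Y)$, and let $P = (\pi\circ f)(\Hy^m)$, which equals $\Hy^m / \Lambda'$ where $\Lambda'$ is the image of $\Lambda$ in $\Isom(\Hy^m)$. The hypothesis gives us a finite-volume submanifold-with-boundary $N \subset P$ whose totally geodesic boundary meets $\Sigma$ and which does not deformation retract into $\Sigma$. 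First I would pull $N$ back to $\Hy^m$: a lift $\widetilde{N} \subset Y$ is a convex region bounded by boundary hyperplanes that are lifts of components of $\partial N$, at least one of which (the one mapping into $\Sigma$) has finite $(m-1)$-volume by Lemma~\ref{lem:FiniteVolBoundary} (using $m-1 \ge 1$; when $m-1 \ge 2$ this is immediate, and when $m=2$ the boundary is a point or geodesic, handled directly). Since $N$ does not deformation retract into that boundary component, Lemma~\ref{lem:FindElement}(2) produces an element $g \in \pi_1(N) < \Lambda$ not conjugate into the boundary subgroups; in particular $g$ is a hyperbolic isometry of $Y$ whose axis is not contained in any boundary hyperplane of $\widetilde{N}$, so the $\Lambda$-translates of $\widetilde{N}$ genuinely spread around inside $Y$.

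The heart of the argument is to upgrade "$\Lambda'$ acting on $\Hy^m$ with a finite-volume piece $N$ plus one extra hyperbolic element" to "$\Lambda'$ is a lattice in $\Isom(\Hy^m)$." Here is where arithmeticity enters decisively. Because $M$ is arithmetic, $\Ga$ is commensurable with some $\SO(q,\calO_K)$, and the totally geodesic hypersurface $\Sigma$ is cut out by a $K$-rational subspace $W_0 \subset V$ of signature $(n-1,1)$; more to the point, the plane $Y$ — being a totally geodesic subspace meeting $\Sigma$ in positive codimension along a finite-volume piece — should be defined over $K$ as well. Concretely: $\partial N \cap \Sigma$ has a finite-volume component $Z$, which is a totally geodesic $(m-1)$-submanifold inside the arithmetic manifold $\Sigma = \Hy^{n-1}/\Ga_0$; by the standard structure theory recalled in the excerpt (all geodesic submanifolds of arithmetic manifolds are cut out by rational subspaces), the $(m-1)$-plane carrying $Z$ is $K$-rational inside $W_0$, hence inside $V$. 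The $m$-plane $Y$ contains this rational $(m-1)$-plane and is moreover determined, together with the hyperbolic element $g$ whose axis lies in $Y \setminus \partial\widetilde N$: I would argue that $g \in \Ga$ is itself a $K$-point, its axis is a $K$-rational geodesic, and the $K$-span of that geodesic together with the rational $(m-1)$-plane is all of $Y$'s defining subspace $W \subset V$. Thus $W$ is $K$-rational of signature $(m,1)$, and then $\Stab_\Ga(W(\R)) = \SO(q|_W, \calO_K)$ (up to finite index and commensurability) is an arithmetic lattice in $\SO(m,1)$ — exactly the codimension-$(n-m)$ geodesic submanifold construction recalled before Theorem~\ref{thm:CommClassify}. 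Since $\Lambda = \Stab_\Ga(Y)$ contains this arithmetic lattice, $\Lambda'$ is itself a lattice, so $P = \Hy^m/\Lambda'$ has finite volume.

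The main obstacle I anticipate is the rationality step: proving that $Y$ (equivalently its defining subspace $W \subset V$) is $K$-rational rather than merely some real totally geodesic plane. The subtlety is that a priori $N$ is only given as an abstract finite-volume piece inside the immersed image, and one must leverage the two genuinely different constraints — the finite-volume intersection $Z \subset \Sigma$ forcing a rational $(m-1)$-plane, and the "non-deformation-retract" hypothesis forcing (via Lemma~\ref{lem:FindElement}) an extra integral hyperbolic element $g$ transverse to it — to pin down $W$ over $K$. I would handle the case $m = n-1$ (where $Y$ and $\Sigma$ have the same dimension but $Y \ne \Sigma$, and $\partial N \cap \Sigma$ is codimension one in each) and the case $m < n-1$ somewhat separately, since in the former the "extra element" plays a slightly different role. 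A secondary technical point is the low-dimensional base case $m = 2$ of Lemma~\ref{lem:FiniteVolBoundary}: there $\partial N \cap \Sigma$ might be a cusp-to-cusp geodesic rather than a finite-volume $1$-manifold, but a cusp-to-cusp geodesic in the arithmetic manifold $\Sigma$ still connects two $\Ga_0$-rational parabolic points, so its defining line is $K$-rational and the argument goes through; I would record this carefully. Once rationality is in hand, everything else is the standard arithmetic dictionary, and there is no further analytic input needed.
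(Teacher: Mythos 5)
Your treatment of the main case is essentially the paper's argument: both proofs use the $K$-rationality of the $(m-1)$-plane $V_1$ carrying a finite-volume component of $\partial N\cap\Sigma$, plus the element $g\in\pi_1(N)$ supplied by Lemma \ref{lem:FindElement}, to produce one additional $K$-rational vector in $W(\R)\smallsetminus V_1(\R)$ and hence pin down $W$ over $K$. (Your source for that extra vector is the $K$-defined signature-$(1,1)$ plane of the axis of $g$, where the paper instead uses a vector of $g\bigl(V_1(K)\bigr)$; both work, and your observation that the axis plane is $K$-defined is exactly what the paper uses elsewhere.)

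The genuine gap is in the case you dismiss as a ``secondary technical point.'' When $m=2$ and $N\cap\Sigma$ is a union of closed and cusp-to-cusp geodesics, Lemma \ref{lem:FiniteVolBoundary} does \emph{not} guarantee a finite-volume boundary component: a cusp-to-cusp geodesic has infinite length, so if all components of $\partial N$ are of this type, neither hypothesis of Lemma \ref{lem:FindElement} holds and no element $g$ is produced. Indeed $\pi_1(N)$ can be trivial (e.g.\ $N$ an ideal polygon meeting $\Sigma$ along its sides, the situation flagged in Remark \ref{rem:ClosureAlmostDied}), so ``the argument goes through'' cannot be right as stated --- your argument has nothing to span with once $g$ is gone. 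Knowing that one cusp-to-cusp boundary geodesic determines a $K$-rational signature-$(1,1)$ plane $Y\subset V_0$ only gets you a rational $2$-plane inside the $3$-dimensional $W$; you still need a rational vector of $W(\R)$ outside $V_0(\R)$. The paper's fix is a separate argument: take a \emph{second} boundary geodesic $\tau$ of $N$ which, for the chosen lift $\widetilde\Sigma$, is not contained in $\widetilde\Sigma$; it likewise determines a $K$-rational signature-$(1,1)$ plane $Z\not\subset V_0$, and $\mathrm{span}_K(Y,Z)$ is the required $K$-rational $3$-plane of signature $(2,1)$. Without some such second source of rational data, your proof does not close in this case.
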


\begin{proof}
Since $M$ contains a finite volume hypersurface, we have that $M$ is defined by a quadratic form $q$ on the vector space $V$ defined over a number field $K$ and $\Sigma$ comes from a $K$-defined codimension one subspace $V_0 \subset V$. It will be convenient to set $k = n - m$ as the codimension of $f(\Hy^m)$ in $\Hy^n$.

If $\Sigma$ is nonseparating then let $M_1=M$ cut open along $\Sigma$ and otherwise suppose that $\Sigma$ divides $M$ into two submanifolds with boundary $M_1 \cup M_2$, $\partial M_1 = \partial M_2 = \Sigma$. Set $\Ga_i = \pi_1(M_i)$ and $\Ga_0 = \pi_1(\Sigma)$, considered as a subgroup of $\Ga$. Without loss of generality, we can assume that $N \subset M_1$. Indeed, if every such component of $N$ deformation retracts into $\Sigma$, then $N$ itself would deformation retract into $\Sigma$. Then $\partial N$ contains a nonempty union $N_1, \dots, N_r$ of immersed totally geodesic submanifolds of $\Sigma$ (note that the $N_i$ might intersect if $N$ is not embedded).

We first consider the case where the $N_i$ satisfy either (1) or (2) in Lemma \ref{lem:FindElement}. Let $\De = \pi_1(N) \subset \Ga_1$ and $\De_i = \pi_1(N_i) \subset \Ga_0$. After reordering the boundary components, we can assume that $\pi_1(N_1)$ is nontrivial. Lemma \ref{lem:FindElement} implies that there is some $g \in \De$ that is not $\De$-conjugate into some $\De_i$ (i.e., does not deformation retract into $\Sigma$). For example, this is necessarily the case when $N$ has a unique boundary component, since $N$ cannot deformation retract into $\Sigma$ (see Figure \ref{fig:ConnectedBoundary}). Recall from the proof of Lemma \ref{lem:FindElement} that $N$ cannot be a cylinder.
\begin{figure}
\begin{center}
\begin{tikzpicture}
\draw[very thick, rounded corners] (4,4) -- (0,4) -- (0,0) -- (4,0);
\draw[very thick, rounded corners] (0,3.5) -- (0,4) -- (1,5) -- (5,5) -- (5, 4.8);
\draw[blue, rounded corners] (4.5, 2) .. controls (4.25, 1.75) .. (2, 1.85) -- (2,2.15) .. controls (4.25, 2.25) .. (4.5, 2);
\filldraw (4.5, 2) circle (0.025);
\draw[thick, green, rounded corners] (4.25, 1.5) -- (4.75, 2) -- (4.75, 3) -- (4.25, 2.5) -- (4.25, 1.5);
\draw[thick, green, rounded corners] (4.25, 1.5) -- (1.25, 1.5) -- (1.25, 2.5) -- (4.25, 2.5) -- (4.25, 2.25);
\draw[thick, green, rounded corners] (1.25, 2.25) -- (1.25, 2.5) -- (1.75,3) -- (4.75, 3) -- (4.75, 2.75);
\draw[very thick, red, rounded corners] (4,0) -- (4,4) -- (5,5) -- (5,1) -- (4,0);
\draw (0.5, 0.5) node {$M_1$};
\draw (5.4,4) node [red] {$\Sigma$};
\draw (4.7,1.4) node [green] {$N_1$};
\draw (1.5,3.1) node [green] {$N$};
\draw (1.8,2) node [blue] {$g$};
\end{tikzpicture}
\caption{The case when $N$ has one boundary component.}\label{fig:ConnectedBoundary}
\end{center}
\end{figure}
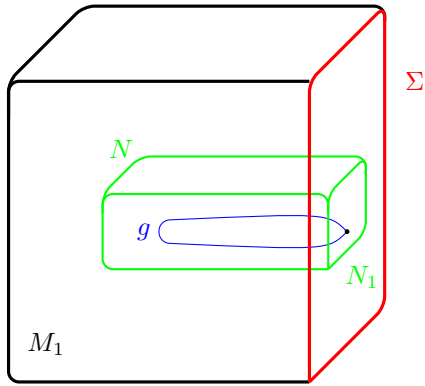
We now need some arithmetic notation.

Since $N_1$ is a finite-volume totally geodesic submanifold of $\Sigma$, it is associated with a $K$-defined subspace $V_1 \subset V_0$ of codimension $k$. Similarly, associated with $f : \Hy^m \to \Hy^n$ is a codimension $k$ subspace $W(\R) \subset V(\R)$ with the property that $V_1(\R) \subset W(\R)$. It follows that $W(\R)$ is a $\De$-invariant subspace of $V(\R)$, since the image of $f$ contains a lift of the map on universal coverings $\widetilde{N} \to \widetilde{M}_1$. In fact, there exists a vector $w \in V(\R)$ so that $W(\R)$ is spanned by $w$ and $V_1(K)$. We will show that we can take $w \in V(K)$, hence $W$ is a $K$-defined subspace of $V$ and $(\pi \circ f)(\Hy^m)$ then must determine an immersed finite-volume totally geodesic submanifold of $M$.

We have that $g(W(\R)) = W(\R)$, but $g(V_1(\R)) \neq V_1(\R)$, since $g$ is not contained in $\Ga_0$. However, the fact that $g \in \Ga_1$ implies that $g(V(K)) = V(K)$. In particular, $g(V_1(K)) \subset W(\R)$ contains a vector $v \in V(K)$ that is not contained in $V_1(\R)$. Therefore, the $K$-span of $V_1(K)$ and $v$ is a codimension $1$ subspace of $V(K)$ contained in $W(\R)$. Thus $W$ is defined over $K$ as claimed. This completes the proof when the $N_i$ satisfy either (1) or (2) in Lemma \ref{lem:FindElement}.

\medskip

By Lemma \ref{lem:FiniteVolBoundary}, what remains is the case where $m=2$ and $N \cap \Sigma$ is a finite union of closed geodesics and cusp-to-cusp geodesics. (Lemma \ref{lem:FindElement} actually allows us to reduce to the case where $N\cap\Sigma$ contains at most one closed geodesic, but this is not needed in what follows.)

Let $\sigma$ and $\tau$ be two of the boundary components of $N$. Each is either a closed geodesic in $M$, or a cusp-to-cusp geodesic. Choose a lift of $\Sigma$ to $\widetilde{\Sigma} = \Hy^{n-1} \subset \Hy^n$. We then have a lift of $N$ to $\Hy^n$ that we can complete to $\widehat{N} = \Hy^2 \subset \Hy^n$ such that $\widehat{N} \cap \widetilde{\Sigma}$ is a geodesic $\widetilde{\sigma}$ lifting $\sigma$. Associated with $\widetilde{\Sigma}$ is a subspace $V_0 \subset V$ of codimension one on which the restriction of $q$ has signature $(n-1,1)$.

First, suppose that $\sigma$ is a cusp-to-cusp geodesic. Then $\widetilde{\sigma}$ is a geodesic connecting two cusp points $z_1, z_2 \in \partial^\infty \Hy^n$ for the action of $\Ga$. Each $z_i$ is determined by a unique $K$-defined isotropic line $L_i \subset V$. Let $Y$ be the plane in $V$ spanned over $K$ by these two lines. Since $G$ has $\Q$-rank one, there cannot be $q$-orthogonal isotropic lines in $V$, hence the restriction of $q$ to $Y^\perp$ is definite. Having dimension $n-1$, we see that $q$ is positive definite on $Y^\perp$, and hence $q$ has signature $(1,1)$ on $Y$ (for the chosen embedding of $K$ in $\R$). This subspace of $V(\R)$ is associated with the geodesic in $\Hy^n$ connecting the two ideal points $z_1$ and $z_2$. Note that $Y \subset V_0$.

When $\sigma$ is a closed geodesic, we similarly obtain a $K$-defined subspace $Y$ on which $q$ has signature $(1,1)$ associated with the lift $\widetilde{\sigma}$. Our choice of lift to $\widetilde{M}$ determines a unique cyclic subgroup $C$ in the conjugacy class of cyclic subgroups of $\Ga$ associated with the geodesic $\sigma$. Then $C$ preserves a $K$-defined $q$-orthogonal splitting of $V$ into $Y \subset V_0$ and $Y^\perp$ having the same properties as in the previous case.

Now, consider the lift $\widetilde{\tau}$ of $\tau$ to $\widehat{N}$. The same arguments as above show that this determines two $K$-defined and $q$-orthogonal subspaces $Z$ and $Z^\perp$ such that the restriction of $q$ to $Z$ has signature $(1,1)$. However, since $\widetilde{\tau}$ is not contained in $\widetilde{\Sigma}$ for this choice of lifts, we see that $Z$ is not a subspace of $V_0$.

\medskip

Now, let $U$ be the subspace of $V$ spanned over $K$ by $Y$ and $Z$. Since $\widetilde{\sigma}$ and $\widetilde{\tau}$ both lie in $\widehat{N}$, i.e., the same embedded $\Hy^2$ in $\Hy^n$, it follows that $U$ is $3$-dimensional and that the restriction of $q$ to $U$ has signature $(2,1)$. Since $U$ is $K$-defined, we see that there is an arithmetic subgroup of $\Ga$ acting on $\widehat{N}$ with finite-volume quotient. This completes the proof of the theorem in this case.
\end{proof}

\begin{rem}\label{rem:ClosureFail}
The assumption that $1 < m$ is critical. Indeed, in the case where $N$ is a geodesic arc in $M_1$ connecting two distinct points in $\Sigma$, the proof completely falls apart in that $N$ and $\partial N$ have trivial fundamental group. However, the first step of the proof does work in the following sense. Suppose that $\Ga$ is an arithmetic Kleinian group that contains arithmetic Fuchsian subgroups. If $\ga_1, \ga_2$ are two purely hyperbolic elements of $\Ga$ that mutually preserve a totally geodesic hyperbolic plane in $\Hy^3$, then $\langle \ga_1, \ga_2\rangle$ is contained in an arithmetic Fuchsian subgroup of $\Ga$.
\end{rem}

\begin{rem}\label{rem:ClosureAlmostDied}
We note that the proof required extra care in the case $m=2$ with $N$ noncompact. For example, one is faced with the possibility that $N$ is an ideal polygon meeting $\Sigma$ along its sides. In this case, there is no fundamental group for $N$ or $\partial N$, but the cusp-to-cusp geodesics still carry sufficient arithmetic data for us to conclude that $N$ is contained in a totally geodesic surface.
\end{rem}

\section{Angle Rigidity}\label{sec:Angle}

\noindent
In this section we prove the following.

\begin{thm}[Angle Rigidity]\label{thm:GPSFiniteness}
Fix a non-arithmetic hyperbolic manifold $M$ that is built from building blocks, and suppose that two adjacent building blocks $N_1$ and $N_2$ are arithmetic and dissimilar. Let $A\subset M$ be a connected finite-volume immersed totally geodesic submanifold of dimension at least $2$ such that $A$ intersects the interior of $N_1$ and $N_2$, i.e. crosses a cutting hypersurface $\Sigma$. Then $A$ meets $\Sigma$ orthogonally.
\end{thm}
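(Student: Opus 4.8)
The plan is to combine Closure Rigidity (Theorem~\ref{thm:ClosureRigidity}) with the commensurability classification (Theorem~\ref{thm:CommClassify}) to derive a contradiction from the assumption that $A$ meets $\Sigma$ nonorthogonally. Let $\Sigma$ be the cutting hypersurface between the adjacent arithmetic blocks $N_1$ and $N_2$, cut from arithmetic manifolds $M_1 = \Hy^n/\Ga^{(1)}$ and $M_2 = \Hy^n/\Ga^{(2)}$, respectively. Since $A$ crosses $\Sigma$ and has dimension $m\ge 2$, the intersection $A\cap N_i$ contains a connected totally geodesic $m$-manifold with boundary on $\Sigma$; after passing to a component and applying Lemma~\ref{lem:FiniteVolBoundary} one arranges that this piece does not deformation retract into $\Sigma$ (the case $m=2$ with $A\cap\Sigma$ a single closed or cusp-to-cusp geodesic needs the separate argument from the proof of Theorem~\ref{thm:ClosureRigidity}, which still produces the conclusion below). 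Closure Rigidity then says: the lift $\widehat A = \Hy^m \subset \Hy^n$ of $A$ through this piece is, on the $N_i$ side, the intersection with $N_i$ of a $K_i$-defined subspace $W_i(\R)\subset V_i(\R)$, i.e. $\widehat A$ descends to a finite-volume totally geodesic submanifold of \emph{both} $M_1$ and $M_2$ simultaneously.

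The key step is to extract from this a constraint forcing orthogonality. Fix a common lift $\widetilde\Sigma = \Hy^{n-1}\subset\Hy^n$ with associated $K_i$-defined hyperplane $V_0^{(i)}\subset V_i$ (the same real hyperplane, but with two different $K$-rational structures coming from the two arithmetic groups). The reflection $r$ in $\widetilde\Sigma$ lies in the commensurator-normalizer picture on each side; more precisely, since $\Sigma$ is an embedded two-sided totally geodesic hypersurface in the arithmetic manifold $M_i$, the group generated by $\Ga^{(i)}$ and $r$ is again arithmetic (commensurable with $\Ga^{(i)}$), because $r$ preserves the quadratic form $q_i$ up to scaling — it is the orthogonal reflection in $V_0^{(i)}$. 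Now consider the subspace $\widehat A$: on the $M_1$ side it is $K_1$-defined, hence $r(\widehat A)$ is again a $K_1$-defined totally geodesic submanifold of $M_1$; likewise on the $M_2$ side $r(\widehat A)$ is $K_2$-defined in $M_2$. So $r(\widehat A)$ is itself a totally geodesic submanifold simultaneously defined over both arithmetic structures, sharing the boundary piece $r(\widehat A \cap \widetilde\Sigma) = \widehat A\cap\widetilde\Sigma$ on $\widetilde\Sigma$.

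From here the argument is: the two hyperplanes $\widehat A$ and $r(\widehat A)$ in $\Hy^n$ meet along $\widehat A\cap\widetilde\Sigma$, and if they are distinct they span a totally geodesic subspace $\widehat B$ of dimension $m+1$ which — being spanned by two $K_i$-defined subspaces — is $K_i$-defined for \emph{each} $i$. Iterating (replacing $\widehat A$ by $\widehat B$, which still crosses $\widetilde\Sigma$ nonorthogonally if $\widehat A$ did) we eventually reach dimension $n$, i.e. all of $\Hy^n$, forcing $V_1 = V$ to be $K_1$-defined \emph{and} $K_2$-defined with the \emph{same} rational hyperplane $V_0$. Then $V_0^\perp$ is a line defined over both $K_1$ and $K_2$, and comparing the restrictions of $q_1,q_2$ to this common rational structure shows $q_1$ and $q_2$ are similar over a common field, so $\Ga^{(1)}$ and $\Ga^{(2)}$ are commensurable by Theorem~\ref{thm:CommClassify} — contradicting that $N_1$ and $N_2$ are dissimilar. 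Hence $\widehat A = r(\widehat A)$, which is exactly the statement that $\widehat A$ (equivalently $A$) meets $\widetilde\Sigma$ (equivalently $\Sigma$) orthogonally.

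The main obstacle I expect is making the ``spanning and iterating'' step rigorous while controlling the rational structures: one must check that the span of two $K$-defined subspaces meeting along a $K$-defined subspace is again $K$-defined (straightforward), but more delicately that the nonorthogonality is genuinely inherited at each stage — equivalently, that $r(\widehat A)\ne\widehat A$ is equivalent to nonorthogonality and persists under the span operation — and that the endgame comparison of $q_1$ and $q_2$ really forces commensurability rather than mere isomorphism of fields. A cleaner route to the endgame may be to avoid iterating to dimension $n$ and instead argue directly: the subspace $U$ spanned by $\widehat A$ and $r(\widehat A)$ is $K_1$- and $K_2$-defined, $r$ acts on $U(\R)$ preserving both rational structures, and $r$ is the orthogonal reflection in $U\cap V_0$; the $(-1)$-eigenline of $r$ in $U$ is then defined over both $K_1$ and $K_2$, and one derives the commensurability contradiction already at this stage from Theorem~\ref{thm:CommClassify} applied to the forms restricted to $U$ together with the fact that both $\Ga^{(i)}\cap\SO(U)$ are arithmetic lattices in the associated $\SO(m,1)$. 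I would present the orthogonality conclusion as the negation of ``$r(\widehat A)\ne\widehat A$'' throughout, so the whole proof becomes: nonorthogonal $\Rightarrow$ a common rational structure on an extra line $\Rightarrow$ $N_1\sim N_2$, contradiction.
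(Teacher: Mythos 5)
Your first step---applying Closure Rigidity on both sides to realize $\widehat A$ as a subspace $W(\R)$ that is simultaneously $K_1$-rational for $(V_1,q_1)$ and $K_2$-rational for $(V_2,q_2)$, and reformulating orthogonality as $r$-invariance of $\widehat A$, where $r$ is the reflection in $\widetilde\Sigma$---matches the paper's setup and is correct: $r$ is the $q_i$-orthogonal reflection in the common hyperplane $H$, hence preserves both rational structures, and $r(W)=W$ if and only if $W$ contains the normal direction $e_0$ to $H$. The problem is that neither of your two proposed endgames actually extracts a contradiction from $r(\widehat A)\neq\widehat A$.

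First, the iteration: $\widehat B=\mathrm{span}(\widehat A, r\widehat A)$ satisfies $r(\widehat B)=\widehat B$ by construction, so $\widehat B$ meets $\widetilde\Sigma$ orthogonally; the claim that nonorthogonality ``persists under the span operation'' is false, and the iteration terminates after one step. (Even if you could reach dimension $n$, the conclusion would be that $V(\R)$ is rational for both structures, which is vacuously true and contradicts nothing.) Second, the eigenline argument: the $(-1)$-eigenline of $r$ is exactly the line $\R e_0$ orthogonal to $H$, and this line is \emph{always} defined over both fields---the two rational structures on it differ by the scalar $\lambda=\sqrt{\alpha_2/\alpha_1}$, but a real line spanned by a $K$-vector for each structure is rational for each. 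So its bi-rationality holds whether or not the blocks are dissimilar and yields no contradiction; nor is there any a priori reason the two arithmetic lattices $\Gamma^{(i)}\cap\SO(U)$ should be commensurable with one another. What is missing is a quantitative use of dissimilarity: writing $q_i=\langle\alpha_i\rangle\oplus q$ on $V_i=\langle e_0\rangle\oplus H$, the gluing map multiplies the $e_0$-coordinate by $\lambda$, and dissimilarity says (via Witt cancellation) that $\lambda\notin K$. One must then check directly that a subspace $W=U\oplus\langle ce_0+h\rangle$ with $U\subset H$, $c\neq 0$ can be rational for \emph{both} structures only when $h\in U(\R)$, i.e.\ only in the orthogonal case---for instance because the line $W/U$ inside $\mathrm{span}(W,rW)/U$ would have slope lying in both $K$ and $\lambda K$, which intersect trivially. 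This computation is the heart of the paper's proof and is absent from yours; without it the dissimilarity hypothesis is never genuinely engaged, and indeed your argument as written would ``prove'' orthogonality even for similar blocks, where the statement is false.
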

\noindent As remarked in the introduction, a special case of this result was recently and independently obtained in \cite{BenoistOh}.
\begin{proof}
The basic idea of the proof is as follows. Each $N_i$ is associated with an arithmetic manifold $M_i$. That the $M_i$ share isometric codimension $1$ submanifolds allows us to assume they are defined via quadratic forms over the same field $K$ \cite[\S 6]{Meyer}. Closure Rigidity allows us to assume that $A$ is built from gluing submanifolds $A_i$ of $N_i$, where $A_i = N_i \cap B_i$ for $B_i$ a finite-volume immersed totally geodesic submanifold of $M_i$. We will show when $M_1$ is not commensurable with $M_2$ that such a matching can only occur when the $B_i$ meet $\Sigma$ orthogonally. See Figure \ref{fig:Angle}. The key point is that such submanifolds $B_i$ arise from certain $K$-defined subspaces of the $K$-vector space for $M_i$, but the identification between half-spaces in $\Hy^n$ arising from gluing $N_2$ to $N_1$ is defined by a map of real vector spaces that preserves definition over $K$ only for very special subspaces.

\medskip

We first carefully explain why Closure Rigidity applies. Set $A_i = N_i \cap A \subset M_i$. This defines an immersed totally geodesic submanifold of $M_i$ with totally geodesic boundary that meets the cutting hypersurface $\Sigma$ nontrivially. If $f_i : \Hy^m \hookrightarrow \Hy^n$ denotes the extension to $\Hy^m$ of the lift $\widetilde{A}_i \to \widetilde{M}_i$, we see that $\widetilde{A}_i$ satisfies the assumptions of Theorem \ref{thm:ClosureRigidity}. The existence of the totally geodesic submanifold $B_i$ as above follows.

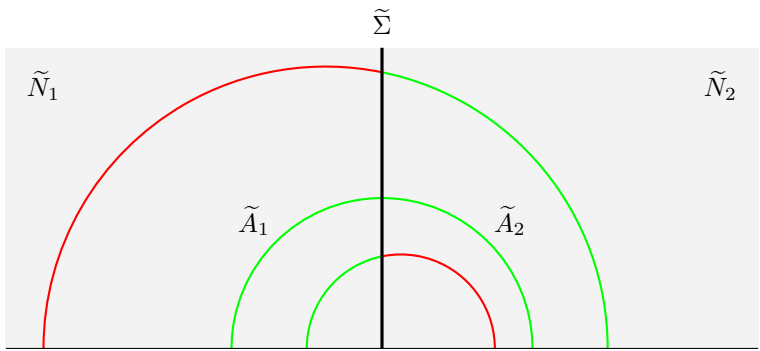
\begin{figure}
\begin{tikzpicture}
\filldraw [black, opacity = 0.001, fill opacity = 0.05] (0,0) -- (5,0) -- (5,4) -- (0,4) -- (0,0);
\node at (4.5,3.5) {$\widetilde{N}_2$};
\filldraw [black, opacity = 0.001, fill opacity = 0.05] (0,0) -- (-5,0) -- (-5,4) -- (0,4) -- (0,0);
\node at (-4.5,3.5) {$\widetilde{N}_1$};
\node at (0, 4.35) {$\widetilde{\Sigma}$};
\draw [thick, green] (-2,0) arc (180:0:2);
\draw [thick, green] (3, 0) arc (0:78.5:3.75);
\draw [thick, red] (-4.5, 0) arc (180:78.5:3.75);
\draw [thick, green] (-1, 0) arc (180:101:1.25);
\draw [thick, red] (1.5, 0) arc (00:101:1.25);
\node at (-1.7,1.7) {$\widetilde{A}_1$};
\node at (1.7,1.7) {$\widetilde{A}_2$};
\draw (-5,0) -- (5,0);
\draw [very thick] (0,0) -- (0,4);
\end{tikzpicture}
\caption{The proof of Angle Rigidity. Subspaces of $\widetilde{N}_i$ associated with geodesic subspaces of $M_i$ (green) are matched only if they meet $\Sigma$ orthogonally.}\label{fig:Angle}
\end{figure}

\medskip

Next we set up some notation and make some simplifying assumptions. Suppose that $N_1$ and $N_2$ are dissimilar arithmetic building blocks glued together by an orientation-reversing isometry of the connected totally geodesic boundary component $\Sigma$. Being arithmetic, each $N_i$ has an associated quadratic space $(V_i, q_i)$, which is well defined up to similarity. The isometry $\varphi : \Sigma \to \Sigma$ determines a similarity between codimension one subspaces $H_i$ of $V_i$ on which the restriction of $q_i$ has signature $(n-1, 1)$. We can change the similarity class of $q_2$ and assume that the similarity $H_2 \to H_1$ is an isometry.

Therefore, we can assume there is a quadratic space $(H, q)$ of dimension $n$ and signature $(n-1, 1)$ that is isometric to a codimension one subspace of each $(V_i, q_i)$. Using Gram--Schmidt, we can write
\[
(V_i, q_i) = (V_i,\langle \alpha_i \rangle \oplus q),
\]
for some $\alpha_i \in K^*$. More specifically, we can choose coordinates $\{x_0, \dots, x_n\}$ on $V_1$ and $\{y_0, \dots, y_n\}$ on $V_2$ such that
\begin{align*}
q_1(x_0, \dots, x_n) &= \alpha_1 x_0^2 + q(x_1, \dots, x_n), \\
q_2(y_0, \dots, y_n) &= \alpha_2 y_0^2 + q(y_1, \dots, y_n).
\end{align*}
The isometry between the models $\Hy_{q_i}^n$ of hyperbolic space associated with $q_2$ and $q_1$ is then induced by the map $V_2(\R) \to V_1(\R)$ defined by $y_0 \mapsto \sqrt{\al_2 / \al_1}\, x_0$ and $y_i \mapsto x_i$ for $1 \le i \le n$ (compare with \cite[\S 2.9]{GPS}). Let $\Phi$ denote this map, and note that $\Phi$ determines a map of $K$-vector spaces $V_2 \to V_1$ if and only if $\alpha_2 / \alpha_1$ is a square in $K^*$. By Witt cancellation, this holds if and only if $q_1$ is isometric to $q_2$, which is definitely not the case if $M_1$ and $M_2$ are noncommensurable.

\medskip

Consider a codimension $k$ totally geodesic submanifold $B_i$ of the arithmetic manifold $M_i$ associated with $q_i$. Then $B_i$ arises from a codimension $k$ subspace $W_i$ of the $K$-vector space $V_i$ on which the restriction of $q_i$ has signature $(n-k, 1)$. Furthermore, suppose that the intersection of $B_i$ with $\Sigma$ contains a codimension $k$ totally geodesic submanifold of $\Sigma$. Then, for the correct choice of base point, $W_i$ intersects $H$ in a codimension $k$ subspace $U_i$ of $H$ on which the restriction has signature $(n-k-1,1)$. Thus $W_i$ is generated by $U_i$ and a vector not in $H$.

Turning this around, we can construct all the submanifolds $B_i$ as above by starting with a codimension $k$ subspace $U_i$ of $H$ then taking the subspace of $V_i$ generated by $U_i$ and another vector $\xi_i$ that is not contained in $H$. Note that $\xi_i$ is not in $H$ if and only if $y_0 \neq 0$. Let $W_i$ be the span over $K$ of $\xi_i$ and $U_i$. To have $\widetilde{B}_1$ glue to $\widetilde{B}_2$ means precisely that $\Phi$ takes $W_2(\R)$ to $W_1(\R)$.

Notice that $\Phi(U_2) = U_1$, since $\Phi(H) = H$. If $\xi_2$ has coordinates $y_i$, then
\[
\Phi(\xi_2) = \left(\sqrt{\al_2 / \al_1} y_0 \,,\, y_1 \,,\, \dots, y_n\right) \in V_1(\R).
\]
If $\al_2 / \al_1$ fails to be a square in $K^*$, we see that $\Phi(W_2(\R))$ is $W_1(\R)$ for $W_1$ a $K$-defined subspace of $V_1$ if and only if $y_1 = \cdots = y_n = 0$. This proves that such a $K$-defined subspace of $V_2$ maps to a $K$-defined subspace of $V_1$ if and only if $W_i(\R)$ is the $\R$-span of $U_i$ and the vector $(1, 0, \dots, 0)$. The same argument shows that this again holds with the roles of $V_1$ and $V_2$ reversed.

\medskip

Geometrically, this proves that the submanifolds $B_i$ of $M_i$ that could possibly give rise to $A$ as in the statement of the theorem are associated with subspaces $W_i$ of $V_i$ generated by a codimension one subspace $U_i$ of $H$ along with the vector $(1,0,\dots,0)$. It remains to compute that such a $B_i$ intersects $\Sigma$ orthogonally.

The subspace of $V_i$ associated with $\Sigma$ is the $q_i$-orthogonal complement to $e = (1,0,\dots,0)$. The subspace associated with $B_i$ is the orthogonal complement $W_i$ to a subspace $Z_i$ of $V_i$ with dimension $k$. Since $(1,0,\dots,0) \in W_i$, $Z_i$ must be a subspace of $H$. However, the angle $\theta_i$ between $B_i$ and $\Sigma$ satisfies
\[
\cos^2 \theta_i = \sup_{z \in Z_i} \frac{\langle e, z \rangle_{q_i}^2}{q_i(e) q_i(z)}.
\]
See \cite[p.\ 71]{Ratcliffe}. Since $Z_i \subseteq H$, $z$ is $q_i$-orthogonal to $e$ for all $z \in Z_i$, and we see that the above expression is identically zero. This shows that $B_i$ meets $\Sigma$ orthogonally, as desired.
\end{proof}
}

\begin{rem}
Note that the proof of Theorem \ref{thm:GPSFiniteness} does not appear to use the assumption that $A$ has dimension at least two. In fact, the use of this hypothesis is hiding in our application of Closure Rigidity, in the very first step of the proof. Indeed, Closure Rigidity fails when $A$ is a geodesic; see Remark \ref{rem:ClosureFail}.
\end{rem}

\begin{rem}\label{rem:BO}
With a bit more work, one can show that this approach proves a stronger result. Suppose $M_1, M_2$ are arithmetic hyperbolic $n$-manifolds containing an isometric totally geodesic hypersurface $\Sigma$. Suppose that there is a further totally geodesic subspace $Z_0 \subset \Sigma$ of codimension $1 \le k < n-1$ and a codimension $k$ totally geodesic submanifold $N_i$ of $M_i$ that meets $\Sigma$ at $Z_0$ with angle $\theta_i$. If $\theta_2 = \pm \theta_1$, then $M_2$ is commensurable with $M_1$. One uses the above decomposition of $q_1$ and $q_2$ to directly build an isometry between the quadratic spaces.
\end{rem}

\begin{rem}\label{rem:BenoistOh}
We now discuss the similarities and differences between our work and the work of Benoist--Oh, in particular with regard to the proofs of Theorem \ref{thm:SimpleMain} in dimension $3$. The angle calculation at the end of the proof of Theorem \ref{thm:GPSFiniteness} specializes in dimension $3$ to be almost identical to their Lemma 12.2 and Corollary 12.3. Corollary 12.3 is then used in exactly the same way in the proof of their Proposition 12.1 as our observation is used in our proof of Theorem \ref{thm:GPSFiniteness}. While there are superficial similarities, the rest of the proofs are in fact substantially different. In most cases of our proof of Theorem \ref{thm:ClosureRigidity}, we use geometric techniques to prove that a connected component of the intersection of a totally geodesic submanifold with an arithmetic building block has non-elementary fundamental group and hence is arithmetically defined. This appears similar to the use of \cite[Thm.\ 11.8]{BenoistOh} in their proof of \cite[Prop.\ 12.1]{BenoistOh}, but in fact the groups considered are different. They consider the stabilizer of an entire geodesic plane in the infinite cover of the arithmetic manifold corresponding to the fundamental group of the building block and use dynamics of unipotent flows to show this group is non-elementary. The group they consider always contains the group we consider. This difference is most striking in those special cases where the object we consider may have no fundamental group while they prove the object they consider always has non-elementary fundamental group.  The proof of \cite[Prop.\ 12.1]{BenoistOh} in that paper only covers the case of three manifolds built by gluing along cocompact cutting surfaces.
\end{rem}

\section{Equidistribution, homogeneous dynamics, and proofs of the main theorems}
\label{section:nimish}

\noindent In this section, we complete the proof of Theorem \ref{thm:Main}.
We will first give some geometric reductions in Section \ref{sec:GeomtoDyn} that recast the problem as a dynamical one. For the reader's convenience, we then give a simple proof in Section \ref{subsection:codim1} in the special case of closed hypersurfaces in compact manifolds. We then go on to give a proof of the general form of Theorem \ref{thm:Main}, first in the cocompact case in Section \ref{sec:GenMainThm}, and finally in the finite-volume case in Section \ref{subsection:finvolhomdyn}.

\subsection{Some geometric preliminaries}\label{sec:GeomtoDyn}

For a Riemannian manifold $M$, we denote by $\OF(M)$ the bundle of \emph{oriented} orthonormal frames on $M$ and for any $1\le m\le n$ we denote by $G^m M$ the bundle over $M$ whose fiber over a point $p$ is the Grassmannian of $m$-dimensional subspaces in $T_pM$. For each $m$, there is a natural bundle map $\OF(M)\rightarrow G^m M$ obtained by sending an orthonormal frame $(v_1, \ldots, v_n)$ at a point $p$ to the $m$-dimensional subspace spanned by $v_1, \ldots ,v_m$. The fiber of the bundle map $\OF(M)\rightarrow G^m M$ can naturally be identified with
\[
\mathrm{S}\left(\Ort(m)\times \Ort(n-m)\right) = \left\{(A,B) \in \Ort(m)\times \Ort(n-m)\ :\ \det(A) \det(B) = 1 \right\}.
\]
An immersion $N\rightarrow M$ with $\dim(N) = m$ induces a map $G^m N \to G^m M$.

\medskip

\noindent
We now spend the rest of this subsection proving the following proposition.

\begin{prop}\label{prop:boundaryprop}
Let $M$ be an $n$-dimensional hyperbolic manifold built from building blocks containing two adjacent, arithmetic, and dissimilar building blocks. Then for any $2 \le m < n$ there is an open subset $\Omega_m \subseteq G^m M$ such that $G^m N \cap \Omega_m = \emptyset$ for any finite-volume immersed totally geodesic submanifold $N \subset M$ of dimension $m$.
\end{prop}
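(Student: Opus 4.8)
The plan is to combine the two rigidity theorems already proved---Closure Rigidity (Theorem \ref{thm:ClosureRigidity}) and Angle Rigidity (Theorem \ref{thm:GPSFiniteness})---to carve out the forbidden open set $\Omega_m$ inside the Grassmann bundle $G^m M$. Fix two adjacent, arithmetic, dissimilar building blocks $N_1$ and $N_2$ with common cutting hypersurface $\Sigma$, and let $\Sigma$ lift to $\widetilde\Sigma = \Hy^{n-1} \subset \Hy^n$. The point is this: if $N \subset M$ is a finite-volume immersed totally geodesic $m$-submanifold, then either $N$ is \emph{disjoint} from the interior of $\Sigma$ (in which case $N$ stays on one side, hence $G^m N$ avoids a neighborhood of the tangent $m$-planes that genuinely cross $\Sigma$ transversally), or $N$ crosses $\Sigma$, in which case Angle Rigidity forces $N$ to meet $\Sigma$ orthogonally. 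So the $m$-planes in $T M$ that are tangent to some geodesic submanifold $N$ and lie at a point of $\Sigma$ are constrained: at a point $p \in \Sigma$, the tangent $m$-plane $P \subset T_p M$ either lies inside $T_p\Sigma$, or is transverse to $T_p\Sigma$ but then must be orthogonal to it, i.e.\ $P = (P \cap T_p\Sigma) \oplus \R\nu$ where $\nu$ is the unit normal to $\Sigma$. The complement of this closed condition within the fiber of $G^m M$ over points of (a suitable open subset of) $\Sigma$ will be the open set $\Omega_m$.

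Concretely, I would first define, for each $p$ in the interior of $\Sigma$, the subset $\calG_p \subset \mathrm{Gr}_m(T_pM)$ consisting of $m$-planes $P$ such that either $P \subseteq T_p\Sigma$, or $\dim(P \cap T_p\Sigma) = m-1$ and the line $P \cap (T_p\Sigma)^\perp$ is all of $(T_p\Sigma)^\perp$ (equivalently $P \supseteq (T_p\Sigma)^\perp$). This $\calG_p$ is a closed, proper (for $2 \le m < n$) subvariety of the Grassmannian: a generic $m$-plane through $p$ meets $T_p\Sigma$ in dimension $m-1$ but is not orthogonal to it. Let $\Omega_m$ be the union over $p$ in the interior of $\Sigma$ of the open complements $\mathrm{Gr}_m(T_pM) \setminus \calG_p$; since $\Sigma$ is an embedded submanifold, this is an open subset of $G^m M$ (one should shrink to an open subset of $\Sigma$ sitting in the interior of $N_1 \cup N_2$ so that everything is happening inside the two building blocks). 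It remains nonempty because $\calG_p$ is proper in each fiber.

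Now suppose $N \subset M$ is a finite-volume immersed totally geodesic $m$-submanifold and that $G^m N$ meets $\Omega_m$; I will derive a contradiction. A point of $G^m N \cap \Omega_m$ gives a point $p$ in the interior of $\Sigma$ lying on $N$, with tangent plane $T_pN \notin \calG_p$. In particular $N$ passes through a point of $\Sigma$ and is not contained in $\Sigma$ (if it were, its tangent plane would lie in $T_p\Sigma$, hence in $\calG_p$). Since $T_pN \notin \calG_p$, the plane $T_pN$ is transverse to $T_p\Sigma$ but does not contain the full normal line, so $N$ is \emph{not} orthogonal to $\Sigma$ at $p$. Using that $N$ is complete and totally geodesic and that a neighborhood of $p$ in $N$ lies inside $N_1 \cup N_2$, the submanifold $N$ genuinely crosses the cutting hypersurface $\Sigma$ (it enters the interiors of both $N_1$ and $N_2$ near $p$, because being transverse but not contained, it has points on both sides; one should note the short argument that $N$ cannot have positive-codimension intersection with $\Sigma$ causing it to ``bounce''). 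Then $N$ satisfies the hypotheses of Theorem \ref{thm:GPSFiniteness} with $A = N$, so $N$ meets $\Sigma$ orthogonally---contradicting $T_pN \notin \calG_p$. Hence $G^m N \cap \Omega_m = \emptyset$.

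The main obstacle, and the step deserving the most care, is verifying that the local hypothesis produced by a point of $G^m N \cap \Omega_m$ actually places us in the global situation required by Angle Rigidity: namely that $N$ ``crosses a cutting hypersurface,'' meaning it meets the interiors of both adjacent blocks $N_1$ and $N_2$. One must choose the open subset of $\Sigma$ used to define $\Omega_m$ to lie well inside $N_1 \cup N_2$, so that a small transverse disk of $N$ around $p$ is contained in $N_1 \cup N_2$; then transversality of $T_pN$ to $T_p\Sigma$ guarantees points of $N$ strictly on each side, i.e.\ in the interiors of $N_1$ and of $N_2$. A secondary technical point is checking that $\calG_p$ is genuinely a proper closed subset of $\mathrm{Gr}_m(T_pM)$ for all $2 \le m < n$ and that its complement varies to give an \emph{open} subset of the total space $G^m M$---both are elementary but should be stated. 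With these in place, Angle Rigidity delivers the contradiction and the proposition follows.
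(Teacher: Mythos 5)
There is a genuine gap, and it is located exactly where you flagged the ``secondary technical point'': the set $\Omega_m$ you construct is \emph{not} an open subset of $G^m M$. Your $\Omega_m$ lies entirely in the part of the bundle sitting over (an open subset of) $\Sigma$, i.e.\ in $\pi^{-1}(\Sigma)$ where $\pi : G^m M \to M$ is the bundle projection. Since $\Sigma$ has codimension one in $M$, the set $\pi^{-1}(\Sigma)$ has empty interior in the total space: the projection $\pi$ is an open map, so any nonempty open subset of $G^m M$ contained in $\pi^{-1}(\Sigma)$ would project to a nonempty open subset of $M$ contained in $\Sigma$, which is impossible. Your set is open only in the restricted bundle $G^m M|_\Sigma$, not in $G^m M$. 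This is not a cosmetic issue: in \S\ref{sec:GenMainThm} the proposition is used by choosing a continuous function $\phi_m$ supported in the preimage $V_m$ of $\Omega_m$ with $\int \phi_m \, d\mu = 1$, so $\Omega_m$ must have nonempty interior (positive measure) in $G^m M$. A set carried by the measure-zero locus $\pi^{-1}(\Sigma)$ cannot do this job.

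The fix is not merely to fatten your set to a neighborhood in $G^m M$, because then $\Omega_m$ contains pairs $(p,W)$ with $p \notin \Sigma$, and for such pairs you must still prove that a totally geodesic $N$ with $T_pN = W$ meets $\Sigma$ somewhere non-orthogonally --- and that implication is the real content of the proof, which your argument never supplies. The paper's construction works over a tubular neighborhood $\widehat U$ of a piece of $\Sigma$: for $p \in \widehat U$ let $\gamma_p$ be the shortest geodesic from $p$ to $\Sigma$, and let $\Omega_m$ consist of those $(p,W)$ for which the angle $\theta(W, \dot\gamma_p)$ is strictly between $0$ and a threshold $\epsilon$ chosen by an explicit computation in $\Hy^2$ (so that a geodesic leaving $p$ at angle less than $\epsilon$ from $\gamma_p$ is forced to cross $\Sigma$). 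This is a genuinely open, nonempty condition over an open set of base points. Then, given $(p, T_pN) \in \Omega_m$, one projects $\dot\gamma_p$ into $T_pN$ to get a geodesic $\eta \subset N$ that must hit $\Sigma$ at some $q'$, and a geodesic-triangle argument together with parallel transport of the component of $\dot\gamma_p$ normal to $T_pN$ produces a vector in $(T_{q'}N)^\perp$ not lying in $T_{q'}\Sigma$, contradicting Theorem \ref{thm:GPSFiniteness}. Your reduction of the problem to Angle Rigidity is the right instinct, but without the thickening and the accompanying hyperbolic trigonometry the proposition does not follow.
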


\begin{proof}
Fix $m \ge 2$. By assumption, there is a totally geodesic cutting hypersurface $\Sigma \subset M$ contained in the common boundary of the arithmetic building blocks.
We know from Theorem \ref{thm:GPSFiniteness} that any closed immersed totally geodesic $m$-dimensional submanifold $N\rightarrow M$ intersects $\Sigma$ orthogonally. That the submanifold $N$ intersects $\Sigma$ orthogonally means that $(T_xN)^\perp \subseteq T_x\Sigma$ for every $x\in N\cap \Sigma$ (note that this condition is in fact symmetric).

\medskip

\begin{figure}
\begin{tikzpicture}
\node at (0, 4.35) {$\gamma$};
\node at (1.1, 2) {$\epsilon_0$};
\node at (1.7, 1.4142) {$A$};
\node at (-0.3, 2) {$B$};
\node at (-0.3, 2) {$B$};
\node at (0.6,1.51) [blue] {$\overline{AB}$};
\node at (1.7,3) [red] {$\alpha$};
\draw (1.4142, 1.8) arc (90:150:0.3);
\draw [very thick, blue] (0,2) arc (90:45:2);
\draw (-5,0) -- (5,0);
\draw [very thick] (0,0) -- (0,4);
\draw [very thick, red] (1.4142,1.4142) -- (1.4142,4);
\filldraw (1.4142,1.4142) circle (0.05);
\filldraw (0,2) circle (0.05);
\end{tikzpicture}
\caption{Choosing $\epsilon_0$ in the proof of Proposition \ref{prop:boundaryprop}.}\label{fig:PickEp}
\end{figure}
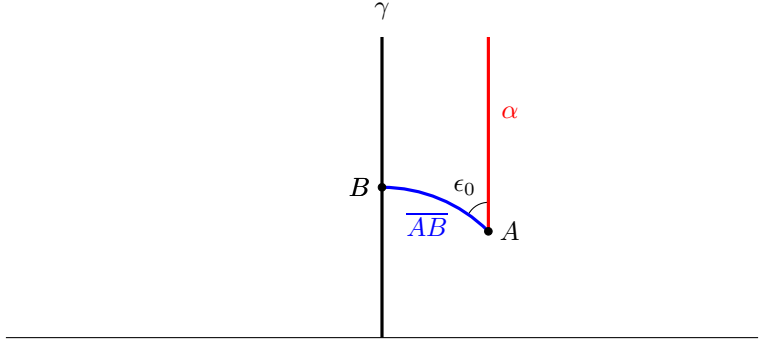

Choose a compact core $C$ for $M$ \cite[p.\ 156]{Marden}. Since $\Sigma \cap C$ is a smoothly embedded compact codimension one submanifold in $M$,
there is a small enough $\delta$ so that the exponential map, defined on the $\delta$-neighborhood of the zero section of the normal bundle to $\Sigma \cap C \subset M$,
gives a diffeomorphism from $(\Sigma \cap C) \times (-\delta, \delta)$ into a subset of $M$. For a subset $S\subset \Sigma \cap C$, denote by $\widehat S$ the image of
$S\times (-\delta, \delta)$ under this exponential map. For all points $p$ inside $\widehat S$, there is a unique minimal length geodesic
$\gamma_p$ from $p$ to $\Sigma \cap C$, which will have length $<\delta$, and will terminate at a point on $S$. Now pick $x$ in the interior of $\Sigma \cap C$,
and fix a small open metric neighborhood $U$ of $x$ in $\Sigma \cap C$, of radius smaller than the distance from $x$ to $\partial C$.

We form the subset
\[
\calV = \{ (p, W)\in G^m M \ | \ p\in { \widehat U} \,,\, \dot{\gamma}_p \in W \},
\]
and denote by $\calV_p$ the subset of $\calV$ lying above a point $p\in \widehat U$. Note that $\calV \to \widehat U$ is a fiber bundle over an open subset of $M$ with fiber $\calV_p$ a closed subset of the corresponding Grassmannian fiber of $G^m M$. Indeed, $\calV_p$ is a copy of $Gr(m-1, \mathbb R^{n-1})$ lying within the $Gr(m, \mathbb R^n)$ fiber of $G^m M$.

Take a geodesic $\gamma$ in $\Hy^2$, a point $A$ at distance $\delta/2$ from $\gamma$, and let $B$ denote the projection of $A$ onto $\gamma$. Set $0<\epsilon_0< \pi/2$ to be the angle between the geodesic $\overline{AB}$ from $A$ to $B$ and the geodesic ray $\alpha$ from $A$ to one of the endpoints of $\gamma$ on $\partial^\infty \Hy^2$. See Figure \ref{fig:PickEp}. In particular, notice that any geodesic arc in $\Hy^2$ starting at $A$ with initial angle $0 < \epsilon < \epsilon_0$ with $\overline{AB}$ then must intersect $\gamma$ in $\Hy^2$.

Take $0 < \epsilon < \epsilon_0$, and consider the set $N_\epsilon(\calV)$ defined by taking the fiber-wise $\epsilon$-neighborhood of the set $\calV$, measured in the angular metric on the Grassmannian. Thus a pair $(p, W)$ lies in $N_\epsilon(\calV)$ if and only if $p\in \widehat U$ and there exists a $W' \in \calV_p$ such that $\theta (W, W')<\epsilon$. Finally, we let
\[
\Omega_m = N_\epsilon (\calV)\ssm \calV,
\]
and note that $\Omega_m$ is an open subset of $G^m M$.

We now check that $G^m N \cap \Omega_m = \emptyset$ for any finite-volume immersed $m$-dimensional totally geodesic submanifold $N$ of $M$. By way of contradiction, assume that there exists a point $(p, W) \in G^m N \cap \Omega_m$. Since $N$ has dimension $m$, this means that $p\in N$, $W= T_pN \subset T_pM$, and $0<\theta (W, \calV_p) < \epsilon$. This last inequality tells us that $0<\theta (W,\dot{\gamma}_p)< \epsilon$, where $\gamma_p$ is the unique geodesic from $p \in \widehat U$ to $U \subset \Sigma$.

We let $w\in W$ denote the projection of $\dot{\gamma}_p$ to $T_pN$, and consider the subspace $Z=\spn\{w, \dot{\gamma}_p\}$ of $T_pM$. We have that $\dim(Z\cap T_pN) = \dim(Z\cap W) =1$, and hence $Z$ defines a (not necessarily closed) geodesic $\eta$ contained in the submanifold $N$. It also contains the direction vector $\dot{\gamma}_p$ corresponding to the minimal geodesic from $p$ to $\Sigma$. Moreover, we have that
\[
z=\dot{\gamma}_p - w \in (T_pN)^\perp,
\]
is a nonzero vector, hence $\dim (Z\cap (T_pN)^\perp)=1$.

\medskip

The $2$-dimensional subspace $Z$ gives rise to an isometric immersion $f:\mathbb H^2 \rightarrow M$ whose image contains both geodesics $\gamma_p$ and $\eta$. We let $q$ denote the terminal point of $\gamma_p$ in the open subset $U$ of the hypersurface $\Sigma$.
Since $\theta (\dot{\gamma}_p, \dot{\eta})=\theta (\dot{\gamma}_p , w) <\epsilon$, our choice of $\epsilon<\epsilon_0$ implies that the geodesic
$\eta$ intersects $\Sigma$ at a point $q'$. Since $\dot{\eta} \in T_pN$, we see that $\eta$ is contained entirely in the submanifold $N$, and hence that $q'\in N\cap \Sigma$.

Notice that we now have a geodesic triangle contained in the isometrically immersed hyperbolic plane $f(\Hy^2)$, consisting of the geodesic segment $\gamma_p$, the geodesic segment $\eta$, and the geodesic $\xi$ joining the two endpoints $q, q' \in \Sigma$. See Figure \ref{fig:hyptriangle}. Since $\Sigma$ is totally geodesic, we see that $\xi$ is contained entirely
in $\Sigma$, and basic hyperbolic trigonometry implies that $\theta_{q'}(\eta, \xi) < \pi/2$, since $\theta_q(\xi, \gamma_p) =\pi/2$.

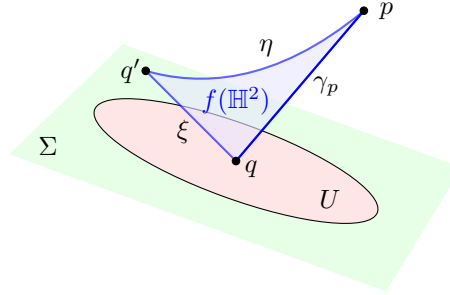
\begin{figure}
\begin{tikzpicture}
\filldraw [fill=green!10, green!10] (2,-1.7279404685324) -- (-3,0.091910702798600) -- (-1.5,1.5459553513993) -- (3, -0.091910702798600);
\filldraw [fill = red!10, rotate around={160:(0,0)}] (0,0) ellipse (2 and 0.5);
\filldraw [thick, blue, fill=blue!10, opacity=0.6] (1.7,2) -- (0,0) -- (-1.2,1.2) .. controls (-0.5,1) and (0.5,1) .. (1.7, 2);
\draw [thick, blue] (1.7,2) -- (0,0);
\node at (1.25,-0.5) {$U$};
\filldraw (0,0) circle (0.05);
\node at (0.2,-0.1) {$q$};
\filldraw (-1.2,1.2) circle (0.05);
\node at (-1.4,1.2) {$q'$};
\node at (-2.5,0.2) {$\Sigma$};
\filldraw (1.7,2) circle (0.05);
\node at (2,2) {$p$};
\node at (1.2,1) {$\ga_p$};
\node at (0.4,1.5) {$\eta$};
\node at (-0.7,0.4) {$\xi$};
\node at (0,0.8) [blue] {$f(\Hy^2)$};
\end{tikzpicture}
\caption{The geodesic triangle in Proposition \ref{prop:boundaryprop}.}\label{fig:hyptriangle}
\end{figure}

We claim that $N$ fails to be orthogonal to $\Sigma$ at the point $q'$. To see this, we need to exhibit a vector in $(T_{q'}N)^\perp$ that does not lie in the subspace $T_{q'}\Sigma$. Recall that $z\in Z$ was a vector orthogonal to $T_pN$, and tangent to the immersed totally geodesic hyperbolic plane $f(\Hy^2)$. We can thus parallel transport this orthogonal vector along $\eta \subset N$ to obtain a vector $z' \in T_{q'}M$. Since parallel transport preserves orthogonality, we see that $z' \in (T_{q'}N)^\perp$. Also, $z\in T_pf(\Hy^2)$, and $\eta$ is contained in $f(\Hy^2)$, which forces $z' \in T_{q'}f(\Hy^2)$. Since we have $z \perp \dot{\eta}(p)$, it follows that $z' \perp \dot{\eta}(q')$.

Finally, focusing on the behavior at $T_{q'}M$, we have three vectors that all lie in the $2$-dimensional subspace $T_{q'}f(\Hy^2) \subset T_{q'}M$: the vector $\dot{\xi}$, which spans the $1$-dimensional subspace
\[
T_{q'}f(\Hy^2) \cap T_{q'}\Sigma,
\]
the vector $\dot{\eta}$, which we argued forms an angle $\theta_{q'}(\dot{\eta}, \dot{\xi}) < \pi/2$ with $\dot{\xi}$, and the vector $z' \in (T_{q'}N)^\perp$ that is
orthogonal to $\dot{\eta}$. It immediately follows that $z'$ cannot be parallel to $\dot{\xi}$. Since $\dot{\xi}$ generates the intersection of $(T_{q'}N)^\perp$ with
$T_{q'}\Sigma$, we have found an element of $(T_{q'}N)^\perp$ that cannot be in $T_{q'}\Sigma$, and hence $N$ is not orthogonal to $\Sigma$.

\medskip

\noindent This contradicts Theorem \ref{thm:GPSFiniteness}, so we conclude that the set $G^m N \cap \Omega_m$ is indeed empty, completing the proof of the proposition.
\end{proof}

\subsection{Codimension 1}
\label{subsection:codim1}

In this subsection, we show how Proposition \ref{prop:boundaryprop} can be used to complete the proof of Theorem \ref{thm:Main} for the case of closed hypersurfaces in a compact manifold.
For this, we use the following result of Shah \cite[Thm.\ B]{Shah} from homogeneous dynamics.

\begin{thm}
\label{thm:nimish}
Suppose $n \ge 3$, $G=\SO_0(n,1)$, $\Gamma < G$ a cocompact lattice, and $W=\SO_0(n-1,1)$. Then every $W$ invariant subset of $G/\Gamma$ is either dense or the union of finitely many closed $W$ orbits.
\end{thm}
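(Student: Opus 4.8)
The plan is to deduce Theorem~\ref{thm:nimish} from Ratner's orbit closure theorem together with the fact that $W = \SO_0(n-1,1)$ is generated by its unipotent one-parameter subgroups. First I would recall that since $W$ is generated by unipotents, Ratner's theorem applies to the $W$-action on $G/\Gamma$: the closure of any single $W$-orbit is of the form $Hx$ for some closed connected subgroup $H$ with $W \subseteq H \subseteq G$ such that $Hx$ carries an $H$-invariant probability measure. The key algebraic input is then the classification of closed connected subgroups $H$ of $G = \SO_0(n,1)$ containing a conjugate of $W = \SO_0(n-1,1)$: since $W$ is a maximal connected subgroup of $G$ (it is the stabilizer, up to index, of a totally geodesic hyperplane $\Hy^{n-1} \subset \Hy^n$, and there is nothing strictly between), one has either $H = W$ or $H = G$. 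So each $W$-orbit closure is either all of $G/\Gamma$ or is itself a closed $W$-orbit supporting a finite invariant measure.

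Next I would pass from a single orbit to an arbitrary $W$-invariant set $F \subseteq G/\Gamma$. If some $W$-orbit in $F$ is dense, then $F$ is dense and we are done. Otherwise every $W$-orbit in $F$ is closed, hence $F$ is a union of closed $W$-orbits, and it remains to show there are only finitely many of them. For this I would use cocompactness of $\Gamma$: each closed $W$-orbit $W x \cong W / (W \cap x\Gamma x^{-1})$ supports a $W$-invariant probability measure, and I would argue that there is a uniform positive lower bound on the volume of such orbits. One clean way is to invoke the linearization/avoidance technology underlying the non-divergence results for unipotent flows — or, more simply in the cocompact case, to note that a sequence of distinct closed $W$-orbits, by Ratner measure classification applied to weak-$*$ limits of the associated measures, would have to equidistribute toward the $G$-invariant (Haar) measure on $G/\Gamma$; but a limit of homogeneous measures on closed $W$-orbits is again supported on a finite union of closed $H$-orbits with $H \supsetneq W$, forcing $H = G$ and hence the limit to be Haar. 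This equidistribution shows that outside any fixed compact neighborhood structure only finitely many closed $W$-orbits can "fail to spread out," and a compactness argument then bounds their number.

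I expect the main obstacle to be the finiteness count rather than the dichotomy itself: ruling out an infinite family of closed $W$-orbits within a single $W$-invariant \emph{closed} set requires care, since a priori such orbits could accumulate. The cleanest resolution is the equidistribution argument just sketched — any infinite sequence of distinct closed $W$-orbits equidistributes to Haar measure, so their union is dense, contradicting the assumption that the orbit is closed and the set is a union of such orbits; concretely, if $F$ is $W$-invariant and closed and contains infinitely many closed $W$-orbits, then $F$ contains a point whose $W$-orbit closure is all of $G/\Gamma$, whence $F = G/\Gamma$, so again we land in the "dense" case. Thus the real content is organizing (i) Ratner's theorem, (ii) maximality of $\SO_0(n-1,1)$ in $\SO_0(n,1)$, and (iii) the equidistribution of closed $W$-orbit measures, into the stated trichotomy-collapsing-to-dichotomy. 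Since this is precisely Shah's \cite[Thm.~B]{Shah}, in the write-up I would simply cite it, but the above is the structure of the argument one would reconstruct.
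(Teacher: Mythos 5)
Your proposal is correct and, in the end, does exactly what the paper does: Theorem \ref{thm:nimish} is not proved in the paper but is quoted as Shah's Theorem B, and the Ratner-plus-equidistribution sketch you give is precisely the alternative route the paper itself mentions (and implements in the finite-volume argument via Dani--Margulis). The only soft spot in your sketch is the claim that a weak-$*$ limit of measures on distinct closed $W$-orbits must be invariant under some $H \supsetneq W$ — ruling out the limit being supported on a single closed $W$-orbit requires the rigidity part of Mozes--Shah (or a linearization argument), but since you ultimately cite Shah this does not affect the written proof.
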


It is worth mentioning that one can also prove Theorem \ref{thm:nimish} by using Ratner's work on invariant measures for unipotent flows \cite{Ratner} and work of either Ratner or Dani--Margulis on equidistribution and orbit closures \cite{RatnerOrbitClosure, DaniMargulis}.
In fact we will use this approach in Section \ref{sec:GenMainThm}. The proof given in Shah's paper follows instead Margulis' original proof of the Oppenheim conjecture using topological dynamics \cite{MargulisOppenheim, DaniMargulisOppenheim}. It seems possible that those methods might prove all the results we need in this paper, but we do not pursue this here.

The following standard lemma shows how immersed totally geodesic submanifolds correspond to closed $W$ orbits in $G/\Gamma$.

\begin{lemma}
\label{lem:submanifoldstoorbits}
Let $M$ be a compact hyperbolic manifold of dimension $n$ and $N \subset M$ a closed immersed totally geodesic hypersurface. Then there are either one or two closed $W$ orbits in $G/\Gamma$ which project onto $N$. Furthermore, distinct immersed totally geodesic hypersurfaces give rise to distinct $W$ orbits.
\end{lemma}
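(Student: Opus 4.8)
The statement has two parts: (1) an immersed totally geodesic hypersurface $N \subset M$ lifts to one or two closed $W$-orbits in $G/\Gamma$, where $G = \SO_0(n,1)$, $W = \SO_0(n-1,1)$, and $\Gamma = \pi_1(M)$; and (2) distinct hypersurfaces give distinct orbits. The plan is to unwind the standard dictionary between totally geodesic submanifolds and homogeneous subsets of the frame bundle. First I would fix the identification $\OF^+(\Hy^n) \cong G$ and hence $\OF^+(M) \cong G/\Gamma$, choosing the basepoint frame so that the distinguished copy $\Hy^{n-1} \subset \Hy^n$ through it has oriented orthonormal frame bundle equal to the subgroup $W < G$. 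An immersed totally geodesic hypersurface $N \to M$ is, up to the choices involved, the image of $\Hy^{n-1} \to \Hy^n \to M$ for some isometric embedding $\Hy^{n-1} \hookrightarrow \Hy^n$; after moving by $G$ we may assume this embedding is the standard one, so $N$ is covered by $gW\Gamma/\Gamma$ for suitable $g$, and in fact (absorbing $g$) by $W\Gamma/\Gamma$. The stabilizer of this $W$-orbit is controlled by $W \cap \Gamma$ and by $\Stab_\Gamma(\widetilde N)$ where $\widetilde N = \Hy^{n-1}$: the orbit $W\Gamma/\Gamma$ is closed precisely because $\pi_1(N) = \Stab_\Gamma^+(\widetilde N)$ (the subgroup preserving $\widetilde N$) acts cocompactly on $\widetilde N$ — this is exactly the hypothesis that $N$ is a closed, i.e. finite-volume compact, totally geodesic submanifold — so $W/(W\cap \Gamma)$ maps to a compact subset of $G/\Gamma$.

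The next step is the counting. The $W$-orbits lying over $N$ correspond to the ways of choosing an oriented orthonormal frame along $N$: given an unoriented totally geodesic copy of $\Hy^{n-1}$, there are exactly two orientations of the normal line (equivalently, two cosets of $W$ inside $\mathrm{S}(\Ort(1)\times\Ort(n-1,1))$, the full stabilizer of the unoriented hyperplane inside $\SO_0(n,1)$). The dichotomy one-or-two arises according to whether some element of $\Stab_\Gamma(\widetilde N)$ reverses the coorientation of $\widetilde N$ in $\Hy^n$: if every element of $\Stab_\Gamma(\widetilde N)$ preserves the coorientation (equivalently lies in $W$ after conjugation), then the two normal orientations give genuinely distinct closed $W$-orbits; if some deck element swaps them, the two orbits coincide as subsets of $G/\Gamma$. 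Concretely, one considers the image of $\Stab_\Gamma(\widetilde N)$ in the two-element group $\{\pm 1\}$ recording the action on the normal line at infinity, and the number of $W$-orbits over $N$ is the index of the kernel, which is $1$ or $2$. I would phrase this via the action of the normalizer $N_G(W)$, which contains $W$ with index two, the nontrivial coset being represented by $\diag(-1,1,\dots,1,1) \in \SO_0(n,1)$ composed with an orientation correction; the $N_G(W)$-orbit of the basepoint is a single closed set projecting to $N$, and it is either one or two $W$-orbits.

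For part (2), suppose $N_1 \ne N_2$ are distinct immersed totally geodesic hypersurfaces but a closed $W$-orbit $\mathcal O$ projects onto both. Projecting $\mathcal O$ down to $G^{n-1}M$ (via the bundle map $\OF(M) \to G^{n-1}M$ from the excerpt) and then to $M$, the image is a single immersed totally geodesic hypersurface, since $\mathcal O$ is a single $W$-orbit: the set of $(n-1)$-planes appearing is $W \cdot (\text{basepoint plane})$, and the tangent distribution it defines along a lift $\Hy^{n-1} \subset \Hy^n$ is integrable with the unique totally geodesic $\Hy^{n-1}$ as leaf. Thus the image in $M$ is determined by $\mathcal O$, forcing $N_1 = N_2$, a contradiction. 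I expect the counting dichotomy in part (1) to be the only real subtlety — the orientation bookkeeping, specifically correctly identifying when a deck transformation of $N$ in $M$ reverses the coorientation and hence collapses the two $W$-orbits into one — while the closedness and the injectivity in part (2) are formal consequences of properness of the $\Stab_\Gamma(\widetilde N)$-action on $\widetilde N$ and of the orbit being a single $W$-orbit. Since this lemma is labelled "standard," I would keep the writeup brief, citing the frame bundle correspondence and emphasizing only the orientation count.
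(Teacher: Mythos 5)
Your proposal is correct and follows essentially the same route as the paper: identify $\OF^+(M)$ with $G/\Gamma$, note that the full (co)orientation-preserving stabilizer of the hyperplane is $\mathrm{S}(\Ort(1)\times\Ort(n-1,1))\cap\SO_0(n,1)$, which contains $W$ with index two, deduce closedness from cocompactness of $\pi_1(N)$ in that stabilizer, and get the one-or-two dichotomy from whether a deck element reverses the coorientation. The injectivity argument is also the same in substance (the orbit determines the hypersurface, phrased in the paper as distinct $N$ giving distinct conjugate stabilizer subgroups $H$).
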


\begin{proof}
Given a closed immersed totally geodesic hypersurface $N$, its lift $\widetilde N$ to the universal cover $\widetilde M= \Hy^n$ has stabilizer
\[
H=g^{-1}\Ort^+(n-1,1)g,
\]
where $g$ is any isometry taking $\widetilde N$ to the standard $\Hy^{n-1}$ stabilized by
\[
\Ort^+(n-1,1) = \mathrm{S}(\Ort(1) \times \Ort(n-1,1)) \cap \SO_0(n,1).
\]
Since $N$ is closed, $H \cap \Gamma$ is a cocompact lattice. Writing $\Ort^+(n-1,1)g = gH$ we see that the $\Ort^+(n-1,1)$ orbit of $[g]$ is the $g$ translate of $H/{H\cap\Gamma}$ in $G/\Gamma$. As $\Ort^+(n-1,1)$ contains $\SO_0(n-1,1)$ as a subgroup of index two, this $\Ort^+(n-1,1)$ orbit is either one or two $\SO_0(n-1,1)$ orbits giving the first claim. Since distinct choices of $N$ give rise to distinct groups $H$, the last statement is clear.
\end{proof}

\noindent We are now in a position to conclude Theorem \ref{thm:Main} in our special case.

\begin{thm}
Let $M$ be a compact hyperbolic manifold built from building blocks with two adjacent, arithmetic, and dissimilar building blocks. Then there are only finitely many closed immersed totally geodesic hypersurfaces in $M$.
\end{thm}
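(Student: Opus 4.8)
The plan is to combine the geometric input from Proposition \ref{prop:boundaryprop} with the dynamical dichotomy of Theorem \ref{thm:nimish} via the dictionary of Lemma \ref{lem:submanifoldstoorbits}. Write $G = \SO_0(n,1)$, $W = \SO_0(n-1,1)$, and realize $M = \Hy^n / \Ga$ with $\Ga < G$ a cocompact lattice. Let $\calC$ be the collection of closed immersed totally geodesic hypersurfaces of $M$, and suppose for contradiction that $\calC$ is infinite. By Lemma \ref{lem:submanifoldstoorbits}, each $N \in \calC$ gives rise to one or two closed $W$-orbits in $G/\Ga$, and distinct hypersurfaces give distinct orbits; hence the union $Y \subseteq G/\Ga$ of all these closed $W$-orbits is a $W$-invariant set that is \emph{not} a finite union of closed $W$-orbits. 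By Theorem \ref{thm:nimish}, $Y$ must therefore be dense in $G/\Ga$.

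Next I would transport this density statement from the frame-type bundle $G/\Ga$ to the Grassmann bundle $G^{n-1}M$. There is a natural projection $G/\Ga \to G^{n-1}M$: a coset $g\Ga$ determines a point of $M$ together with the $(n-1)$-plane spanned by the first $n-1$ vectors of the frame $g$, and this map is continuous, surjective, and open (its fibers are the compact group $\mathrm{S}(\Ort(n-1)\times\Ort(1))$ acting on the right). Under this projection, a closed $W$-orbit through $g\Ga$ corresponding to a hypersurface $N$ maps onto $G^{n-1}N \subset G^{n-1}M$, the set of tangent $(n-1)$-planes to $N$. Therefore the image of $Y$ is exactly $\bigcup_{N \in \calC} G^{n-1}N$, and since $Y$ is dense and the projection is continuous and surjective, this union is dense in $G^{n-1}M$.

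Now I apply Proposition \ref{prop:boundaryprop} with $m = n-1$: since $M$ is built from building blocks with two adjacent, arithmetic, dissimilar ones, there is a nonempty open set $\Omega_{n-1} \subseteq G^{n-1}M$ with $G^{n-1}N \cap \Omega_{n-1} = \emptyset$ for every finite-volume (in particular every closed) immersed totally geodesic hypersurface $N$. But then $\bigcup_{N\in\calC} G^{n-1}N$ is disjoint from the nonempty open set $\Omega_{n-1}$, contradicting its density in $G^{n-1}M$. Hence $\calC$ is finite.

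The step I expect to be the main (though still routine) obstacle is making the passage between $G/\Ga$ and $G^{n-1}M$ fully precise — identifying the correct fiber group of the bundle map $G/\Ga \to G^{n-1}M$, checking that a closed $W$-orbit maps onto precisely $G^{n-1}N$ for the associated $N$ (and not something larger or smaller), and verifying openness of the projection so that density is preserved. One must also be slightly careful that $\Ort^+(n-1,1)$ versus $\SO_0(n-1,1)$ bookkeeping from Lemma \ref{lem:submanifoldstoorbits} does not affect the image in the Grassmann bundle, which it does not since the extra $\Z/2$ acts within a single fiber of $G/\Ga \to G^{n-1}M$. Everything else is a direct concatenation of the quoted results.
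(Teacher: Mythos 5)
Your proposal is correct and follows essentially the same route as the paper: combine Proposition \ref{prop:boundaryprop}, Lemma \ref{lem:submanifoldstoorbits}, and Shah's dichotomy (Theorem \ref{thm:nimish}); the only cosmetic difference is that you push density forward from $G/\Gamma$ to $G^{n-1}M$, whereas the paper pulls the open set $\Omega_{n-1}$ back to its preimage $V\subset G/\Gamma\cong\OF(M)$ and notes the orbits miss $V$, which is the same argument read in the other direction. Your cautionary remarks about the fiber $\mathrm{S}(\Ort(n-1)\times\Ort(1))$ and the $\Ort^{+}(n-1,1)$ versus $\SO_0(n-1,1)$ bookkeeping are handled correctly and match the paper's setup.
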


\begin{proof}
Denote by $\Omega_{n-1}$ the open set provided by Proposition \ref{prop:boundaryprop} applied when $m=n-1$ and by $V$ its pre-image in $G/\Gamma$
(which can be identified with $OF(M)$, the orthonormal frame bundle). By Proposition \ref{prop:boundaryprop} and Lemma \ref{lem:submanifoldstoorbits}, the $W$ orbits arising from all closed immersed totally geodesic hypersurfaces in $M$ have trivial intersection with $V$. By Theorem \ref{thm:nimish} we thus have a finite collection of $W$ orbits and hence we must only have finitely many totally geodesic closed immersed hypersurfaces in $M$.
\end{proof}

\subsection{Theorem \ref{thm:Main} in the compact case}
\label{sec:GenMainThm}

For simplicity, we write this subsection assuming that $M$ is compact, that is to say that $\Gamma <G$ is a cocompact lattice. Throughout we assume $M$ satisfies the hypotheses of Theorem \ref{thm:Main} and use the conventions that $m=n-k$ and $W_m =\SO_0(m,1)$. Our choice of $W_m$ ensures that it is connected and generated by one-parameter unipotent subgroups.

We begin by recalling some notation and a result of Dani--Margulis. Let $\calH$ be the collection of subgroups $H$ of $G$ such that $H \cap \Gamma$ is a lattice in $H$ and $\Ad(H\cap\Gamma)$ is Zariski dense in $\Ad(H)$. Given $H \in \calH$ and a group $W \subset G$ generated by unipotent elements, define
\[
X(H,W) = \{ g \in G \mid Wg \subset gH \}.
\]
We remark for later that the following is immediate from the definition.

\begin{lemma}
\label{lemma:conjugationxhw}
If $x \in G$, then $X(H,xWx^{-1})=xX(H,W)$.
\end{lemma}

We now recall a special case of \cite[Thm.\ 3]{DaniMargulis}, where the advantage of assuming that $\Gamma$ is cocompact is that we may take $K=G/\Gamma$ in their notation. (We emphasize here that our $K$ below will be a maximal compact subgroup of $G$.)

\begin{thm}
\label{thm:dm}
Let $G$ be a Lie group and $\Gamma<G$ a cocompact lattice. Let $\mu$ be the Haar measure on $G/\Gamma$. Let $U=\{u_t\}$ be a one parameter $\Ad$-unipotent subgroup of $G$ and let $\phi$ be a bounded continuous function on $G/\Gamma$. Fix $\epsilon >0$, then there exist finitely many subgroups $H_1, \ldots H_r \in \calH$ such that for $x \notin \cup_{i=1}^r X(H_i, U)\Gamma/\Gamma$ there exists a $T_0 \ge0$ such that
\[
\left|\frac{1}{T}\int_0^T \phi(u_tx) dt - \int_{G/\Gamma} \phi d\mu\right|<\epsilon,
\]
for all $T > T_0$.
\end{thm}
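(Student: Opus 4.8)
The plan is to extract the statement directly from the equidistribution theorem of Dani and Margulis, \cite[Thm.\ 3]{DaniMargulis}; there is essentially nothing to do beyond specializing their hypotheses. Recall the general form of that result: given a Lie group $G$, a lattice $\Gamma < G$, a \emph{compact} subset $K \subseteq G/\Gamma$, a one-parameter $\Ad$-unipotent subgroup $U = \{u_t\} \subset G$, a bounded continuous function $\phi$ on $G/\Gamma$, and $\epsilon > 0$, there exist finitely many $H_1, \dots, H_r \in \calH$ such that for every $x \in K$ with $x \notin \bigcup_{i=1}^r X(H_i, U)\Gamma/\Gamma$ there is a $T_0 = T_0(x) \ge 0$ with
\[
\left| \frac{1}{T} \int_0^T \phi(u_t x)\, dt - \int_{G/\Gamma} \phi\, d\mu \right| < \epsilon
\]
for all $T > T_0$, where $\mu$ is the Haar measure on $G/\Gamma$ and $X(H,U) = \{ g \in G : Ug \subset gH\}$, matching the notation fixed above.

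To deduce the theorem as stated we take $K = G/\Gamma$. This is permissible precisely because $\Gamma$ is assumed cocompact, so that $G/\Gamma$ is itself compact; every remaining hypothesis of \cite[Thm.\ 3]{DaniMargulis} is already in force ($U$ is a one-parameter $\Ad$-unipotent subgroup, $\phi$ is bounded and continuous, $\mu$ is the Haar measure on $G/\Gamma$, and $\calH$ and $X(H,U)$ are defined exactly as there). With this choice, the conclusion holds verbatim for all $x$ outside the singular set $\bigcup_{i=1}^r X(H_i,U)\Gamma/\Gamma$, which is the assertion. So the only step is the innocuous one of observing that cocompactness lets the compact set in the Dani--Margulis theorem be all of $G/\Gamma$.

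For completeness, and in case one prefers an argument to a citation, we recall what underlies \cite[Thm.\ 3]{DaniMargulis}: one would combine Ratner's classification of finite ergodic $U$-invariant measures on $G/\Gamma$ as homogeneous measures \cite{Ratner} (which in particular forces $\calH$ to meet only countably many $\Gamma$-conjugacy classes) with the linearization technique, whereby the return behaviour of the unipotent trajectory near each homogeneous subvariety $X(H,U)\Gamma/\Gamma$ is tracked by polynomial maps into finite-dimensional representations of $G$; a trajectory that fails to equidistribute must then spend a uniformly positive fraction of time near one of finitely many such subvarieties, which by the polynomial-divergence estimates can only happen when $x$ itself lies in that subvariety. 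The genuinely delicate point there is the \emph{uniformity}, namely producing a single finite list $H_1, \dots, H_r$ valid for all $x \in K$ simultaneously, and that is the technical heart of \cite{DaniMargulis}. Since we quote that theorem directly, this difficulty is not one we have to confront here.
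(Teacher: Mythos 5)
Your proposal is correct and matches the paper's treatment exactly: the paper likewise presents this statement as the special case of \cite[Thm.\ 3]{DaniMargulis} obtained by taking the compact set $K$ in their hypotheses to be all of $G/\Gamma$, which is permissible precisely because $\Gamma$ is cocompact. The additional sketch of the Ratner/linearization machinery underlying the Dani--Margulis theorem is accurate but not required, since the citation does all the work.
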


\noindent The theorem says that any $u_t$ trajectory not contained in any $X(H_i,U)$ eventually has time average for $\phi$ equal to the space average for $\phi$. Let $\pi_m:\OF(M)\rightarrow G^{m}M$ be the bundle map defined in \S \ref{sec:GeomtoDyn}, $\Omega_m$ be the open set provided by Proposition \ref{prop:boundaryprop}, and let $V_m = \pi_m^{-1}(\Omega_m)$.
For our purposes we will always take $\phi_m$ to be a compactly supported function of total integral $1$ with support in $V_m$. Then any unipotent trajectory in $\OF(M)$ corresponding to part of the horocycle flow of a closed immersed totally geodesic submanifold must have
\[
\frac{1}{T}\int_0^T \phi_m(u_tx) dt=0,
\]
for all $T$. In particular, the entire trajectory is contained in
\[
\bigcup_{i=1}^r X(H_i, U)\Gamma/\Gamma,
\]
by Theorem \ref{thm:dm}.

In the remainder of this section, we show that all pre-images of closed immersed totally geodesic submanifolds are contained in
\[
\bigcup_{i=1}^r X(H_i, W_m)\Gamma/\Gamma,
\]
for some finite collection of subgroups $H_i\in\calH$, then give a concrete description of each $X(H_i,W_m)$. This is the content of Lemma \ref{lemma:onetomany} and Proposition \ref{prop:xhw}.
The geometric translations in Corollaries \ref{cor:finitelymanyxhw} and \ref{cor:oneortwosubs} will then give the requisite finiteness statement.
We now begin with a key reduction for our proof, noting that $W_m$ is generated by unipotent subgroups.

\begin{lemma}
\label{lemma:onetomany}
Fix $m$ between $2$ and $n-1$, let $W=W_m$, and let $U_1, \ldots, U_s$ be unipotent subgroups generating $W$. If $H \in \calH$, then
\[
X(H,W) = \bigcap _{i=1}^s X(H, U_i).
\]
\end{lemma}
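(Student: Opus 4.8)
The plan is to prove the two inclusions separately, where one direction is essentially formal and the other requires a short argument about how the defining condition $Wg \subset gH$ propagates along a generating set.

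First I would establish the inclusion $X(H,W) \subseteq \bigcap_{i=1}^s X(H,U_i)$, which is immediate: if $g \in X(H,W)$ then $Wg \subset gH$, and since each $U_i \subseteq W$ we get $U_i g \subset Wg \subset gH$, so $g \in X(H,U_i)$ for every $i$. This holds with no hypotheses beyond $U_i \subseteq W$ and uses nothing about $\calH$ or unipotence.

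The substance is the reverse inclusion. Suppose $g \in \bigcap_{i=1}^s X(H,U_i)$, so $U_i g \subset gH$ for each $i$, equivalently $g^{-1} U_i g \subset H$ for each $i$. I want to conclude $g^{-1} W g \subset H$. The set $S = \{ w \in G : g^{-1} w g \in H \} = g H g^{-1}$ is a subgroup of $G$ containing each $U_i$; since the $U_i$ generate $W$ as a group, $W \subseteq g H g^{-1}$, i.e.\ $Wg \subset gH$, so $g \in X(H,W)$. The only subtlety is that $X(H,U_i)$ was defined via $U_i g \subset gH$ where $U_i$ is a subgroup, so the condition really is $g^{-1} U_i g \subseteq H$ on the nose (not merely for a generating one-parameter piece), and then the generation statement ``$U_1,\dots,U_s$ generate $W$'' is exactly what closes the argument. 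I do not expect any genuine obstacle here; the proof is a one-paragraph group-theoretic observation, and the role of $H \in \calH$ and of unipotence is only to make the ambient statements (and the later applications via Theorem \ref{thm:dm}) meaningful, not to power this particular lemma. The one point to be careful about in writing is to phrase everything in terms of the subgroup $gHg^{-1}$ (or equivalently $g^{-1}(\cdot)g \in H$) so that the passage from generators to the generated subgroup $W$ is transparent.

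\begin{proof}
The inclusion $X(H,W) \subseteq \bigcap_{i=1}^s X(H,U_i)$ is immediate from the definition: if $g \in X(H,W)$, then $U_i g \subseteq Wg \subseteq gH$ for each $i$, since $U_i \subseteq W$.

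For the reverse inclusion, suppose $g \in \bigcap_{i=1}^s X(H,U_i)$, so that $U_i g \subseteq gH$, equivalently $g^{-1} U_i g \subseteq H$, for every $i$. Consider the subgroup $g^{-1}\left(\bigcap_{i} \text{—}\right)$; more precisely, let
\[
S = \{ w \in G \mid g^{-1} w g \in H \} = g H g^{-1},
\]
which is a subgroup of $G$. By hypothesis $U_i \subseteq S$ for all $i$, and since $U_1, \ldots, U_s$ generate $W$ as a group, it follows that $W \subseteq S = gHg^{-1}$. Hence $Wg \subseteq gH$, i.e.\ $g \in X(H,W)$. This proves $\bigcap_{i=1}^s X(H,U_i) \subseteq X(H,W)$, and the lemma follows.
\end{proof}
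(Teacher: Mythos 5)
Your proof is correct, and it takes a slightly different route from the paper's. You argue entirely at the group level: the condition $U_ig\subset gH$ is equivalent to $U_i\subset gHg^{-1}$, and since $gHg^{-1}$ is a subgroup and the $U_i$ generate $W$ as a group, you conclude $W\subset gHg^{-1}$ directly. The paper instead differentiates the defining condition, observing that $g\in X(H,W)$ if and only if $\Ad(g^{-1})\fw\subset\fh$, and then uses that $\fh$ is a Lie subalgebra together with the fact that the $\fu_i$ generate $\fw$ as a Lie algebra to get the reverse containment. The two arguments have the same skeleton (one inclusion is formal, the other follows from a generation statement plus closure of the relevant object under the group/algebra operations), but yours is more elementary: it avoids the implicit appeals to the correspondence between connected subgroups and their Lie algebras, and to the fact that group generation by connected subgroups matches Lie-algebra generation. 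The paper's infinitesimal formulation buys nothing extra here, since $W$ and the $U_i$ are connected and the hypothesis is stated as group generation; your version is the cleaner one for this lemma. The only cosmetic issue is the dangling phrase ``Consider the subgroup $g^{-1}(\bigcap_i\mbox{---})$,'' which should simply be deleted in favor of the precise definition of $S$ that follows it.
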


\begin{proof}
It is clear that $X(H,W)\subset X(H,U_i)$ for all $1\le i\le s$.
Taking the derivative of the defining equation of $X(H,W)$ we see that $g\in X(H,W)$ if and only if $\Ad(g^{-1}) \fw \subset \fh$.
Consequently if $g\in X(H,W)$, then $\Ad(g^{-1}) \fu \subset \fh$ for any connected subgroup $U<W$.
As $\fh$ is a subalgebra, if $\Ad(g^{-1}) \fu_i \subset \fh$ for all $1\le i\le s$ then $\Ad(g^{-1}) \fw \subset \fh$.
This gives the reverse containment.
\end{proof}

\begin{cor}
\label{cor:finitelymanyxhw}
Fix $m$ between $2$ and $n-1$, and let $K \cong \SO(n)$ denote a maximal compact subgroup of $G = \SO_0(n,1)$. Then there are finitely many subgroups $H_{1,m}, \cdots H_{j_m,m}$ of $G$ such that the set of all closed immersed totally geodesic submanifolds of $M$ of dimension $m$ is contained in
\[
\calX_m = K\backslash K\left(\bigcup_{i=1}^{j_m} X(H_{i,m},W_m)\Gamma/\Gamma\right)\subset M.
\]
In particular, the set of all closed immersed totally geodesic submanifolds of $M$ of dimension between $2$ and $n-1$ is contained in the union of the $\calX_m$.
\end{cor}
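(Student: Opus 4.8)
The plan is to translate the statement into homogeneous dynamics on the frame bundle $\OF(M)=G/\Gamma$ and combine three ingredients: the equidistribution theorem of Dani--Margulis (Theorem~\ref{thm:dm}), the ``forbidden region'' Proposition~\ref{prop:boundaryprop}, and the reduction Lemma~\ref{lemma:onetomany}, after which projecting from the frame bundle to $M=K\backslash G/\Gamma$ is immediate. Fix $2\le m\le n-1$ and write $W=W_m=\SO_0(m,1)$. First I would recall, via the dimension-$m$ analogue of Lemma~\ref{lem:submanifoldstoorbits}, that a closed immersed totally geodesic $m$-dimensional submanifold $N\subset M$ lifts to finitely many closed $W$-orbits $\calO\subset G/\Gamma$, each carried by $\pi_m$ into the tangent lift $G^mN\subset G^mM$ and each projecting onto $N\subset M$. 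By Proposition~\ref{prop:boundaryprop} we have $G^mN\cap\Omega_m=\emptyset$, hence $\calO\cap V_m=\emptyset$ for every such $\calO$; since $\phi_m$ is supported in $V_m$, this forces $\phi_m\equiv 0$ on $\calO$. Consequently, for every unipotent one-parameter subgroup $U\subseteq W$, every $x\in\calO$, and every $T>0$ we have $\frac{1}{T}\int_0^T\phi_m(u_tx)\,dt=0$, whereas $\int_{G/\Gamma}\phi_m\,d\mu=1\neq 0$.

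The heart of the argument is to confine these orbits to finitely many sets of the form $X(H,W)\Gamma/\Gamma$. I would fix one unipotent one-parameter subgroup $U_1\subseteq W$ and apply Theorem~\ref{thm:dm} with $\phi=\phi_m$ and $U=U_1$, obtaining a \emph{fixed} finite family $H_1,\dots,H_r\in\calH$ (depending only on $m$, not on $N$) outside the union of whose sets $X(H_i,U_1)\Gamma/\Gamma$ every $U_1$-trajectory equidistributes. By the previous paragraph no orbit $\calO$ as above can equidistribute, so $\calO\subseteq\bigcup_i X(H_i,U_1)\Gamma/\Gamma$. Since $X(H_i,U_1)\Gamma$ is a closed subset of $G$ (a standard fact in the Dani--Margulis/Mozes--Shah linearization theory) while $X(H_i,U_1)$ is a real algebraic subvariety of $G$, each $X(H_i,U_1)\Gamma/\Gamma\cap\calO$ is a closed, locally analytic subset of the connected homogeneous space $\calO$, hence is either all of $\calO$ or nowhere dense; as $\calO$ is not a finite union of proper closed nowhere-dense subsets, $\calO\subseteq X(H_{i_0},U_1)\Gamma/\Gamma$ for a single index $i_0$. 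Using the $W$-invariance of $\calO$ together with the identity $wX(H,U_1)=X(H,wU_1w^{-1})$ from Lemma~\ref{lemma:conjugationxhw}, this gives $\calO=w\calO\subseteq X(H_{i_0},wU_1w^{-1})\Gamma/\Gamma$ for every $w\in W$; since $W$ is almost simple, finitely many conjugates $w_1U_1w_1^{-1},\dots,w_NU_1w_N^{-1}$ generate $W$, and Lemma~\ref{lemma:onetomany} identifies the single group $X(H_{i_0},W)=\bigcap_{a=1}^N X(H_{i_0},w_aU_1w_a^{-1})$.

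The step I expect to be the main obstacle is passing from ``$\calO$ lies in each $X(H_{i_0},w_aU_1w_a^{-1})\Gamma/\Gamma$'' to ``$\calO$ lies in $X(H_{i_0},W)\Gamma/\Gamma$'' — that is, producing a \emph{single} $\Gamma$-coset representative witnessing all of these containments at once, rather than one representative per $a$ (the na\"ive intersection only gives $\calO\subseteq\bigcap_a\bigl(X(H_{i_0},w_aU_1w_a^{-1})\Gamma/\Gamma\bigr)$, which a priori strictly contains $X(H_{i_0},W)\Gamma/\Gamma$). Here I would exploit that $\calO$ is a \emph{closed} $W$-orbit: choosing a representative $g$ of a point of $\calO$ with $g^{-1}U_1g\subseteq H_{i_0}$, the group $F=g^{-1}Wg$ lies in $\calH$ by the Borel density theorem, $\calO\subseteq X(F,W)\Gamma/\Gamma$ tautologically, and the containments above show that $F$ is generated by the unipotent subgroups $g^{-1}(w_aU_1w_a^{-1})g$, each lying in some $\Gamma$-conjugate of $H_{i_0}$; using almost-simplicity of $F$ one forces $F$ into a single $\Gamma$-conjugate $\gamma H_{i_0}\gamma^{-1}$, whence $X(F,W)\subseteq X(\gamma H_{i_0}\gamma^{-1},W)$ and so $\calO\subseteq X(\gamma H_{i_0}\gamma^{-1},W)\Gamma/\Gamma=X(H_{i_0},W)\Gamma/\Gamma$. (This is the point at which a concrete description of the sets $X(H,W)$ — to be developed in this section — pays off.) Relabelling the finitely many groups produced this way as $H_{1,m},\dots,H_{j_m,m}$, every closed immersed totally geodesic $m$-submanifold has its lifted $W$-orbits inside $\bigcup_i X(H_{i,m},W_m)\Gamma/\Gamma$; projecting under $\OF(M)=G/\Gamma\to M=K\backslash G/\Gamma$ yields $N\subseteq\calX_m$, and taking the union over $2\le m\le n-1$ gives the final statement.
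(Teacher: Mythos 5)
Your overall route is the paper's: Proposition \ref{prop:boundaryprop} forces the time averages of $\phi_m$ along every unipotent trajectory in the lift of $N$ to vanish, Theorem \ref{thm:dm} then traps these trajectories in finitely many sets $X(H,U)\Gamma/\Gamma$, and Lemma \ref{lemma:onetomany} converts the unipotent data into the groups $X(H_{i,m},W_m)$. The paper's own proof stops there, declaring the corollary ``immediate'' from these two statements (and in the finite-volume analogue, Proposition \ref{prop:finitevolume}, it simply says ``taking intersection over $j$''), so the coset-matching issue you isolate --- that $\bigcap_j\bigl(X(H,U_j)\Gamma/\Gamma\bigr)$ a priori strictly contains $X(H,W)\Gamma/\Gamma$ because each containment may be witnessed by a different $\gamma\in\Gamma$ --- is genuinely elided in the paper, and you are right to flag it. Your patch is in the correct spirit but has two soft spots: $X(H,U)\Gamma$ need not be closed in $G$ (it is only a countable union of the closed subvarieties $X(H,U)\gamma$, so the dichotomy should be Baire-category on countably many pieces, not a finite union of closed sets), and the final appeal to ``almost-simplicity forces $F$ into a single $\Gamma$-conjugate of $H_{i_0}$'' is not a valid deduction as stated, since different unipotent generators could a priori land in different conjugates. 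The clean way to close the gap is to run your category argument on a fixed lift: writing the closed orbit covering $N$ as $Lg\Gamma/\Gamma$ with $L=\mathrm{S}(\Ort(n-m)\times\Ort(m,1))\cap G$ (for $m<n-1$ it is this $L$-orbit, not a single $W_m$-orbit, that is closed and projects onto $N$; note $L$ normalizes $W_m$), the sets $\{l\in L : lg\gamma\in X(H_i,U_1)\}$ are countably many closed subvarieties of $L$ covering $L$, so one of them equals $L$, i.e.\ $Lg\gamma_0\subseteq X(H_{i_0},U_1)$ for a \emph{single} $\gamma_0\in\Gamma$. Then $(lg\gamma_0)^{-1}U_1(lg\gamma_0)\subseteq H_{i_0}$ for all $l\in L$, and since the $W_m$-conjugates of $U_1$ generate $W_m$, this already yields $Lg\gamma_0\subseteq X(H_{i_0},W_m)$ with no further intersection over generators needed. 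With that substitution your argument is complete and is, if anything, more careful than the one printed in the paper.
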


\begin{proof}
Fix some $m$ between $2$ and $n-1$.
By definition, the pre-image in $G/\Gamma$ of any closed immersed totally geodesic submanifold $N$ of $M$ with dimension $m$ has empty intersection with $V_m$. Arguing as in Lemma \ref{lem:submanifoldstoorbits}, we see that $\widetilde N \subset \widetilde M$ is stabilized by a conjugate of
\[
L = \mathrm{S}(\Ort(n-m) \times \Ort(m,1)) \cap \SO_0(n,1).
\]
Again as in Lemma \ref{lem:submanifoldstoorbits}, there is a closed $L$ orbit in $G/\Gamma$ covering $N$ which is clearly $W_m$ invariant, since $W_m<L$.
Therefore the set of all closed immersed totally geodesic submanifolds of $M$ is contained in a $W_m$ invariant set avoiding $V_m$.
Picking any bounded continuous function $\phi_m$ with support in $V_m$ and average $\int \phi_m d\mu =1$, the statement is then immediate from Theorem \ref{thm:dm} and Lemma \ref{lemma:onetomany} for totally geodesic submanifolds of dimension $m$. Repeating the argument in each dimension establishes the last statement in the corollary and completes the proof.
\end{proof}

To complete the proofs of our main theorems, we now need to compute the sets $X(H,W_m)$ for any $H\in\calH$. Note that by definition any $H$ for which $X(H_i,W_m)$ is nonempty contains a conjugate of $W_m$ or equivalently $H$ is a conjugate of a group containing $W_m$. The following lemma is elementary and contained in, for example, Einsiedler--Wirth \cite[Cor.\ 3.2]{EinsiedlerWirth}.

\begin{lemma}
\label{lemma:intermediatesubgroups}
Any closed subgroup $H<G=\SO_0(n,1)$ containing $W_m$ is of the form
\[
\mathrm{S}(K \times \Ort(\ell,1)) \cap G,
\]
where $\ell \geq m$ and $K$ is a compact subgroup of $\Ort(n-\ell)$.
\end{lemma}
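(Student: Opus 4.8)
The plan is to pass immediately to the level of Lie algebras and classify all subalgebras of $\mathfrak{g} = \mathfrak{so}(n,1)$ containing $\mathfrak{w}_m = \mathfrak{so}(m,1)$. Fix the orthogonal decomposition $\R^{n,1} = \R^{m,1}\oplus\R^{n-m}$, with $\R^{n-m}$ positive definite, on which $W_m=\SO_0(m,1)$ acts in the standard way on the first summand and trivially on the second. If $H<G$ is closed with $W_m\subseteq H$, then its Lie algebra $\mathfrak{h}$ contains $\mathfrak{w}_m$, and since $\mathfrak{h}$ is a subalgebra, $[\mathfrak{w}_m,\mathfrak{h}]\subseteq[\mathfrak h,\mathfrak h]\subseteq\mathfrak{h}$; thus $\mathfrak{h}$ is an $\ad(\mathfrak{w}_m)$-submodule of $\mathfrak{g}$. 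Because $\mathfrak{w}_m$ is semisimple, Weyl's complete reducibility theorem applies, and under the block decomposition above one has the $\mathfrak{w}_m$-module decomposition
\[
\mathfrak{g}\;=\;\mathfrak{so}(m,1)\;\oplus\;\mathfrak{so}(n-m)\;\oplus\;\bigl(\R^{n-m}\otimes\R^{m,1}\bigr),
\]
in which $\mathfrak{so}(m,1)=\mathfrak{w}_m$ is the adjoint module, $\mathfrak{so}(n-m)$ is trivial, and $\mathfrak{w}_m$ acts on the off-diagonal block through the second tensor factor via the standard representation, making that block $(n-m)$ copies of the (absolutely irreducible) standard module.

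First I would split off the summand $\mathfrak{w}_m$: since $\mathfrak{w}_m\subseteq\mathfrak h$ and $\mathfrak{w}_m$ has the $\mathfrak{w}_m$-invariant complement $\mathfrak{c}=\mathfrak{so}(n-m)\oplus(\R^{n-m}\otimes\R^{m,1})$, one gets $\mathfrak{h}=\mathfrak{w}_m\oplus(\mathfrak{h}\cap\mathfrak{c})$ with no ``twisted'' alternatives --- this is where any low-rank coincidence (e.g.\ that $\R^{2,1}$ is isomorphic to the adjoint module of $\mathfrak{so}(2,1)$, or that $\mathfrak{so}(3,1)$ has complex endomorphism ring) becomes irrelevant, since we never compare the adjoint and standard modules. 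Now $\mathfrak{h}\cap\mathfrak{c}$ is a submodule of $\mathfrak{c}$; since the trivial and standard modules of $\mathfrak{so}(m,1)$ are never isomorphic ($1\ne m+1$ as $m\ge2$) and the standard module is absolutely irreducible, every submodule of $\mathfrak{c}$ splits as $\mathfrak{a}\oplus(U\otimes\R^{m,1})$ for a linear subspace $\mathfrak{a}\subseteq\mathfrak{so}(n-m)$ and a linear subspace $U\subseteq\R^{n-m}$. Hence every candidate has the shape
\[
\mathfrak{h}\;=\;\mathfrak{so}(m,1)\;\oplus\;\mathfrak{a}\;\oplus\;\bigl(U\otimes\R^{m,1}\bigr),\qquad \mathfrak{a}\subseteq\mathfrak{so}(n-m),\ \ U\subseteq\R^{n-m}.
\]

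Next I would impose that this $\mathfrak{h}$ is closed under the bracket. Identifying $\mathfrak{so}(V)\cong\wedge^2 V$, the only relations that are not automatic are $[\,U\otimes\R^{m,1},\,U\otimes\R^{m,1}\,]\subseteq\mathfrak h$, $[\mathfrak a,\,U\otimes\R^{m,1}]\subseteq\mathfrak h$, and $[\mathfrak a,\mathfrak a]\subseteq\mathfrak a$. A short computation with the bracket of decomposable bivectors shows the first forces $\wedge^2 U=\mathfrak{so}(U)\subseteq\mathfrak a$, the second forces $\mathfrak a$ to normalize $U$, and the third says $\mathfrak a$ is a subalgebra; these conditions are also sufficient. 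A subalgebra $\mathfrak a\subseteq\mathfrak{so}(n-m)$ containing $\mathfrak{so}(U)$ and normalizing $U$ must equal $\mathfrak{so}(U)\oplus\mathfrak a''$ with $\mathfrak a''$ a subalgebra of $\mathfrak{so}(U^\perp)$. Writing $\ell=m+\dim U$ and using $\R^{m,1}\oplus U\cong\R^{\ell,1}$ and $U^\perp\cong\R^{n-\ell}$, one recognizes
\[
\mathfrak{h}\;=\;\underbrace{\mathfrak{so}(m,1)\oplus\mathfrak{so}(U)\oplus(U\otimes\R^{m,1})}_{=\ \mathfrak{so}(\ell,1)}\;\oplus\;\mathfrak a'',\qquad \mathfrak a''\subseteq\mathfrak{so}(n-\ell),\ \ \ell\ge m.
\]
To finish, I would pass back to the group, taking care of components: since $\ell\ge m\ge2$, the factor $\SO_0(\ell,1)$ is semisimple without compact factors, hence characteristic in $H^0$; as $H$ normalizes $H^0$ it normalizes $\SO_0(\ell,1)$, so it preserves $\R^{\ell,1}=\Fix(\SO_0(\ell,1))^{\perp}$ and $\R^{n-\ell}$. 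Therefore $H\subseteq(\Ort(\ell,1)\times\Ort(n-\ell))\cap G$; letting $K\subseteq\Ort(n-\ell)$ be the image of $H$ under the projection to the second factor, the inclusion $\SO_0(\ell,1)\subseteq H$ gives $H=\mathrm{S}(K\times\Ort(\ell,1))\cap G$ with $K$ compact, which is the assertion.

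I expect the main obstacle to be not any single step but the bookkeeping: writing the $\mathfrak{w}_m$-module decomposition of $\mathfrak{so}(n,1)$ and its structure constants precisely enough to extract the conditions $\mathfrak{so}(U)\subseteq\mathfrak a$ and $\mathfrak a\cdot U\subseteq U$ (including sign conventions for $\mathfrak{so}(V)\cong\wedge^2 V$), and then handling a possibly disconnected $H$ at the very end so that the conclusion is the full subgroup $\mathrm{S}(K\times\Ort(\ell,1))\cap G$ and not merely its identity component. Everything else is routine representation theory of $\mathfrak{so}(m,1)$ together with the elementary fact that a subalgebra of $\mathfrak{so}(d)$ normalizing a subspace splits along the corresponding orthogonal decomposition.
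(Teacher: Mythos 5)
Your argument is correct, but there is nothing in the paper to compare it against step by step: the paper does not prove Lemma \ref{lemma:intermediatesubgroups} at all, deferring to Einsiedler--Wirth \cite[Cor.\ 3.2]{EinsiedlerWirth}, so you have in effect supplied the omitted proof. Your route --- decompose $\fg$ as a $\fw_m$-module into the adjoint, trivial, and standard-isotypic pieces, use the containment $\fw_m\subseteq\fh$ to split off the adjoint summand, classify the remaining submodules via absolute irreducibility of the standard module, and then read off the bracket conditions $\wedge^2U\subseteq\Fr a$ and $\Fr a\cdot U\subseteq U$ --- is the standard one and all the steps check out, including the passage back to the group via the fact that $\mathfrak{so}(\ell,1)$ is the unique noncompact simple ideal of $\fh$ and hence characteristic. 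Two small points deserve an explicit sentence if you write this up: (i) the compactness of $K$ requires knowing that $p_1(H)\subseteq\Ort(n-\ell)$ is \emph{closed}, not merely bounded; this follows because any $(A_j,B_j)\in H$ with $A_j\to A$ can be adjusted by $\{1\}\times\SO_0(\ell,1)\subseteq H$ so that $B_j$ lies in a fixed finite set of coset representatives for $\Ort(\ell,1)/\SO_0(\ell,1)$, after which closedness of $H$ applies; and (ii) in the degenerate case $\dim U\le 1$ the condition $\mathfrak{so}(U)\subseteq\Fr a$ is vacuous, and one should note that $\Fr a\cdot U\subseteq U$ together with skew-symmetry already forces $\Fr a$ to act trivially on $U$, so the splitting $\Fr a=\mathfrak{so}(U)\oplus\Fr a''$ still holds. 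With those two remarks added, the proof is complete.
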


\noindent We also need an elementary algebraic lemma concerning $W_m$, $H$, and $G$.

\begin{lemma}
\label{lemma:ghw}
Fix $m$ between $2$ and $n-1$, let $W=W_m$, and let $H$ be any subgroup satisfying $W<H<G$.
Then any subgroup $W'$ of $H$ isomorphic to $W$ is conjugate to $W$ in $H$. Furthermore, $\Aut(W) < G$ and $\Aut(W) <H$ unless $H$ is $K \times \SO(m,1)$ for some compact $K$.
\end{lemma}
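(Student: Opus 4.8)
The plan is to work inside the orthogonal group, treating $W = W_m = \SO_0(m,1)$ as a subgroup of $G = \SO_0(n,1)$ in the standard way, i.e.\ acting on the last $m+1$ coordinates of $\R^{n+1}$ and fixing the first $n-m$ coordinates. By Lemma~\ref{lemma:intermediatesubgroups}, an intermediate group $H$ with $W < H < G$ has the form $\mathrm{S}(K \times \Ort(\ell,1)) \cap G$ with $\ell \ge m$ and $K$ a compact subgroup of $\Ort(n-\ell)$; in particular $H$ contains $\SO_0(\ell,1)$ as the ``noncompact part''. So I first reduce to the case $H = \mathrm{S}(K \times \Ort(\ell,1)) \cap G$ and keep track of how $W$ sits inside the $\SO_0(\ell,1)$ factor.

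For the conjugacy statement, let $W' < H$ be a subgroup isomorphic to $W = \SO_0(m,1)$. First I would argue that $W'$ projects trivially to the compact factor $K$: the group $\SO_0(m,1)$ is connected, noncompact, simple (for $m \ge 2$), and has no nontrivial compact quotients, so its image in the compact group $K$ must be trivial; hence $W' < \SO_0(\ell,1)$ (up to the determinant condition, which is automatic since $W'$ is connected). Now the problem becomes: two copies of $\SO_0(m,1)$ inside $\SO_0(\ell,1)$ are conjugate. This is a statement about subgroups of the isometry group of $\Hy^\ell$ isomorphic to $\Isom^+(\Hy^m)$; such a subgroup must stabilize a totally geodesic copy of $\Hy^m$ in $\Hy^\ell$ (this follows from the classification of such subgroups, or concretely from the fact that a faithful representation $\SO_0(m,1) \to \SO_0(\ell,1)$ of the relevant type preserves a unique $(m+1)$-dimensional subspace of signature $(m,1)$, with positive-definite complement — one can see this by decomposing the standard representation of $\SO_0(\ell,1)$ restricted to $W'$ into irreducibles and noting the constraints imposed by the signature). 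Two totally geodesic $\Hy^m$'s in $\Hy^\ell$ are carried to one another by an element of $\SO_0(\ell,1) < H$, giving the conjugacy in $H$.

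For the statement about $\Aut(W)$: since $W = \SO_0(m,1)$ is centerless (for $m \ge 2$), $\Inn(W) \cong W$, and the relevant point is whether the full normalizer of $W$ in $G$ realizes all of $\Aut(W)$ and lies in $H$. Here I would use that the normalizer $N_G(W)$ acts on $W$ by automorphisms, and compute $N_G(W)$ explicitly using the standard coordinate description: $N_G(W)$ is generated by $W$ itself together with the subgroup of $G$ commuting with $W$, which is $\mathrm{S}(\Ort(n-m) \times \Ort(1))\cap G$ acting on the complementary coordinates (plus possibly a reflection in one hyperbolic coordinate accounting for an outer automorphism of $\SO_0(m,1)$ when $m$ is such that one exists). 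The outer automorphism group of $\SO_0(m,1)$ — conjugation by an orientation-reversing isometry of $\Hy^m$, i.e.\ by an element of $\Ort(m,1)\smallsetminus\SO_0(m,1)$ — is realized by an element of $G$ precisely when there is room in the complementary coordinates to absorb the extra sign, i.e.\ when $n - m \ge 1$, which always holds since $m \le n-1$. So $\Aut(W)$ embeds in $G$. Finally, for $\Aut(W) < H$: the outer part requires an element of $\Ort(m,1)\smallsetminus\SO_0(m,1)$ in the $(m,1)$-block, compensated by a reflection in the complementary coordinates; such a compensating reflection exists inside $H = \mathrm{S}(K \times \Ort(\ell,1))\cap G$ as long as $\ell > m$ (use the extra hyperbolic coordinates) or as long as $K$ is large enough to contain a reflection. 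The one case where neither option is available is exactly $\ell = m$ and $K$ trivial, i.e.\ $H = \SO_0(m,1) = W$ itself — but the lemma's excluded case is stated as $H \cong K \times \SO(m,1)$ for compact $K$, which after matching conventions is the case $\ell = m$ with the $\Ort(n-m)$-part being all of $H$'s compact factor; in that situation the outer automorphism of $W$ cannot be realized inside $H$ because there is no orientation-reversing element of the $(m,1)$-block available within $H$ whose determinant can be compensated.

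The main obstacle I expect is pinning down exactly which $H$ fail to contain $\Aut(W)$ and matching that up cleanly with the stated exceptional case ``$H$ is $K \times \SO(m,1)$ for some compact $K$'': this requires being careful about the determinant-one condition defining $\mathrm{S}(\cdot)\cap G$ and about whether an outer automorphism of $\SO_0(m,1)$ (realized by an orientation-reversing isometry of $\Hy^m$) can be compensated by a reflection elsewhere so as to land back in $H$. The conjugacy part is comparatively routine once one knows a copy of $\SO_0(m,1)$ in $\SO_0(\ell,1)$ stabilizes a totally geodesic $\Hy^m$; I would either cite this or give the short representation-theoretic argument via decomposing the standard representation and using the signature constraint to force uniqueness of the $(m,1)$-subspace.
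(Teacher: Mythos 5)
Your proposal is correct and follows essentially the same route as the paper: reduce via Lemma~\ref{lemma:intermediatesubgroups} to the $\SO_0(\ell,1)$ factor, observe that any copy of $\SO_0(m,1)$ there stabilizes a totally geodesic $\Hy^m$ and use transitivity of $\SO_0(\ell,1)$ on such subspaces, then realize $\Aut(W)=\Isom(\Hy^m)$ via orientation-reversing isometries compensated in the complementary coordinates. You actually supply two justifications the paper leaves implicit (the trivial projection of $W'$ to the compact factor, and why $W'$ preserves an $(m,1)$-signature subspace), and your flagged worry about the exceptional case resolves exactly as the paper states: $\Aut(W)\not<H$ precisely when $\ell=m$ and the compact factor of $H$ contains no determinant $-1$ element, i.e.\ $H=K\times\SO_0(m,1)$.
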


\begin{proof}
For the first statement, we can assume $H=\SO_0(\ell,1)$ since any subgroup $W' \cong \SO_0(m,1)$ is obviously contained in a subgroup of $H$ isomorphic to $\SO_0(\ell,1)$. In this case we identify $\Isom(\Hy^\ell)$ with $H$ by writing $\Hy^\ell= K\backslash H$, where $K=\SO(\ell)$ is a maximal compact subgroup of $H$. The orbits of $W$ and $W'$ in $\Hy^\ell$ are totally geodesic copies of $\Hy^m$. Since $\SO_0(\ell, 1)$ acts transitively on totally geodesic embeddings of $\Hy^m$, there is an isometry $g \in H$ carrying the $W$ orbit to the $W'$ orbit and hence $g^{-1}Wg< W'$. The last statement follows similarly as $\Aut(W)$ is simply the group of all isometries of $\Hy^m$, including the orientation-reversing isometries, and these can be realized in $\Isom(\Hy^\ell)$.
The only case where these are not all in $H$ is when $H$ contains only $\SO_0(m,1)$ and not $\Ort^+(m,1)$.
\end{proof}

We now describe the possible $X(H,W_m)$, where in what follows we write $Z(W)$ for the centralizer of a subgroup $W$ in $G$.

\begin{prop}
\label{prop:xhw}
Fix $m$ between $2$ and $n-1$, let $W=W_m$, and fix $H\in\calH$ for which $X(H,W)$ is nonempty.
Then either there exists $x \in G$ such that $X(H,W)= Z(W)xH$ or there exists $x_1,x_2 \in G$ such that $X(H,W) = Z(W) x_1 H \cup Z(W) x_2 H$.
\end{prop}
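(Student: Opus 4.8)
The plan is to analyze the set $X(H,W)=\{g\in G\mid Wg\subset gH\}$ directly, using the structure theory of intermediate subgroups and the transitivity statements from Lemma~\ref{lemma:ghw}. First I would record the infinitesimal reformulation already used in the proof of Lemma~\ref{lemma:onetomany}: $g\in X(H,W)$ if and only if $\Ad(g^{-1})\fw\subset\fh$, equivalently $\Ad(g^{-1})\fw$ is a subalgebra of $\fh$ conjugate (inside $\fg$) to $\fw$. By Lemma~\ref{lemma:intermediatesubgroups} we may assume $H=\mathrm{S}(K_0\times\Ort(\ell,1))\cap G$ with $\ell\ge m$ and $K_0$ compact; since $X(H,W)\neq\emptyset$ gives some $g_0$ with $g_0^{-1}Wg_0<H$, replacing $H$ by a conjugate we may as well assume $W<H$ at the outset, and it then suffices (by left-translation) to describe $X(H,W)$ and remember the single conjugating element at the end.

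The key step is the following: if $g\in X(H,W)$, then $W':=\Ad(g^{-1})(W)$ is a copy of $W\cong\SO_0(m,1)$ sitting inside $H$, so by the first statement of Lemma~\ref{lemma:ghw} there is $h\in H$ with $h^{-1}W'h=W$, i.e.\ $(gh)^{-1}W(gh)=W$, so $gh\in N_G(W)$; hence $g\in N_G(W)H$. Conversely $N_G(W)H\subset X(H,W)$ since $W<H$. Thus $X(H,W)=N_G(W)H$, and the whole problem reduces to understanding the normalizer $N_G(W)=N_G(\SO_0(m,1))$ inside $G=\SO_0(n,1)$ modulo $H$. Now $N_G(W)$ preserves the totally geodesic $\Hy^m\subset\Hy^n$ stabilized by $W$ (that $\Hy^m$ is intrinsic to $W$, being the unique closed $W$-orbit that is a copy of $\Hy^m$), so $N_G(W)\subset L=\mathrm{S}(\Ort(n-m)\times\Ort(m,1))\cap G$; one checks $N_G(W)=Z(W)\cdot(\text{image of }\Aut(W)\text{ realized in }L)$, and since the identity component of $L$ already contains $W$ with $L/\!\sim$ controlled by the components, $N_G(W)$ meets at most the two cosets of $Z(W)W$ coming from $\pm\id$ on $\Hy^m$. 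Combining, $X(H,W)=N_G(W)H$ equals either $Z(W)H$ or $Z(W)H\cup Z(W)x_0H$, where $x_0$ realizes an orientation-reversing isometry of the $\Hy^m$; in the first case this happens precisely when $\Aut(W)<H$, which by the last sentence of Lemma~\ref{lemma:ghw} is exactly when $H$ is \emph{not} of the form $K\times\SO(m,1)$. Restoring the conjugating element $g_0$ from the reduction and using Lemma~\ref{lemma:conjugationxhw} gives the stated $X(H,W)=Z(W)xH$ or $Z(W)x_1H\cup Z(W)x_2H$.

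The main obstacle I anticipate is the bookkeeping of connected components: carefully justifying that $N_G(W)$ does not spread over more than two $Z(W)H$-cosets, and that the two cosets genuinely coincide exactly when $\Aut(W)<H$. This requires knowing that $N_G(W)/Z(W)W$ injects into $\Out(W)\cong\Z/2$ (the orientation class of isometries of $\Hy^m$), that $Z(W)$ already captures the $\Ort(n-m)$-ambiguity transverse to $\Hy^m$, and that an orientation-reversing element of $\Aut(W)$ can always be realized inside $L<G$. All of these are routine once one writes $G$ acting on $\Hy^n$ and $W$ acting on the totally geodesic $\Hy^m$, using that $Z(W)$ contains a copy of $\Ort(n-m)$ (intersected with $G$) acting on the orthogonal complement; the argument of Lemma~\ref{lemma:ghw} supplies the transitivity and the realization of $\Aut(W)$ that make the two-coset dichotomy precise.
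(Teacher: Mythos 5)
Your argument is correct and is essentially the paper's own proof: after reducing to $W<H$ by conjugation, you use the first part of Lemma \ref{lemma:ghw} to move $g^{-1}Wg$ back to $W$ inside $H$ and then read off the at-most-two cosets from $\Aut(W)/\Inn(W)\cong\Z/2$, exactly as the paper does (the paper phrases the conclusion as ``$yh\in Z(W')$ or $yh=zf$'' rather than through $N_G(W)H$, but this is only a repackaging). Your explicit identification $X(H,W)=N_G(W)H$ and the observation that $N_G(W)\subset Z(W)W\cup Z(W)Wf$ is a slightly cleaner way to organize the same computation, and the final conjugation via Lemma \ref{lemma:conjugationxhw} matches the paper.
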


\begin{proof}
Conjugating $W$ by a suitable element $x\in G$ we may assume that $W'=x^{-1}Wx<H$.
We first show the corresponding result for $W'$.

It is clear that $Z(W')H \subset X(H,W')$ and we need only prove the reverse inclusion.
To this end, let $y \in X(H,W')$ and define $W''=y^{-1}W'y $ to be the resulting subgroup of $H$.
Then from Lemma \ref{lemma:ghw}, we know that $W''$ is conjugate to $W'$ by an element $h\in H$. When $\Aut(W') < H$, we may moreover choose $h$ so that conjugating $W'$ by $yh$ is the identity on $W'$. This immediately implies that $yh \in Z(W')$ and hence $y\in Z(W')H$. When $\Aut(W')$ is not contained in $H$, we can only guarantee that $yh$ is either in $Z(W')$ or is of the form $zf$ where $z \in Z(W')$ and $f \in G$ is any fixed element of $G$ inducing the outer automorphism of $W$. Hence either $X(H,W')=Z(W')H$ or $X(H,W')=Z(W')H \cup Z(W')fH$ as claimed.

For the general case, we use Lemma \ref{lemma:conjugationxhw} to compute $X(H,W)$. Since
\[
Z(W')=Z(x^{-1}Wx)=x^{-1}Z(W)x,
\]
in the first case we get that $X(H,W)= x x^{-1}Z(W)xH= Z(W)xH$ and in the second case we get that $X(H,W) = Z(W)xH \cup Z(W)xfH$.
\end{proof}

\begin{cor}\label{cor:oneortwosubs}
For any fixed $H \in \calH$ and $m$ between $2$ and $n-1$, the subset $K\backslash KX(H,W_m)\Gamma/\Gamma \subset M$ is either one or two closed totally geodesic submanifolds of $M$.
\end{cor}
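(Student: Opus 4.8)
The plan is to deduce the statement directly from Proposition \ref{prop:xhw} together with the elementary geometry of $W_m=\SO_0(m,1)$ inside $G=\SO_0(n,1)$. We may assume $X(H,W_m)\ne\emptyset$, so by Proposition \ref{prop:xhw} it is a union of at most two sets of the form $Z(W_m)\,x\,H$ with $x^{-1}W_m x\subseteq H$; hence it suffices to show that for each such $x$ the set $K\backslash K Z(W_m)\,x\,H\,\Gamma/\Gamma$ is a single closed totally geodesic submanifold of $M$, and the ``one or two'' count follows. The first point is that $Z(W_m)$ is precisely the (compact) subgroup fixing pointwise the standard totally geodesic $\Hy^m\subseteq\Hy^n$, and for the standard choice of maximal compact $K\cong\SO(n)$ one has $Z(W_m)\subseteq K$; therefore $KZ(W_m)=K$ and the set in question is just $K\backslash K x H\,\Gamma/\Gamma$.

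Next I would identify $K\backslash KxH$ inside $\Hy^n=K\backslash G$. By Lemma \ref{lemma:intermediatesubgroups}, $H=\mathrm{S}(K'\times\Ort(\ell,1))\cap G$ with $\ell\ge m$, so $H$ setwise stabilises a totally geodesic $\Hy^\ell\subseteq\Hy^n$ containing the standard $\Hy^m$, and $H$ contains $\SO_0(\ell,1)$, which already acts transitively on that $\Hy^\ell$. Since $x^{-1}W_m x\subseteq H$ and, by Lemma \ref{lemma:ghw}, all copies of $W_m$ in $H$ are $H$-conjugate to the standard one, we may write $x=\nu h$ with $h\in H$ and $\nu$ in the normalizer $N_G(W_m)$ of $W_m$ in $G$; as $N_G(W_m)=\mathrm{S}(\Ort(m,1)\times\Ort(n-m))\cap G$ is exactly the stabiliser of the standard $\Hy^m$, the basepoint moved by $\nu$ lies on $\Hy^m\subseteq\Hy^\ell$. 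Consequently $K\backslash KxH=K\backslash K\nu H$ is the $H$-orbit of a point of $\Hy^\ell$, which by transitivity of $\SO_0(\ell,1)$ is all of $\Hy^\ell$. Projecting to $M$, the set $K\backslash KxH\,\Gamma/\Gamma$ is exactly the image of this totally geodesic $\Hy^\ell$ under $\Hy^n\to M$, hence a connected complete immersed totally geodesic submanifold; and since $H\in\calH$ the orbit $xH\,\Gamma/\Gamma$ is closed in $G/\Gamma$ and its image under the proper projection $G/\Gamma\to M$ is a closed subset of $M$, hence compact (as $M$ is compact) and in particular of finite volume. This produces the required single closed totally geodesic submanifold.

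The step requiring genuine care is the identification $K\backslash KxH=\Hy^\ell$: one must verify that the ``extra'' compact factor $K'$ in $H$ does not enlarge the orbit, and --- the real content --- that the element $x$ can be absorbed, modulo $K$ on the left and $H$ on the right, into $N_G(W_m)$, so that the moved basepoint remains on $\Hy^m\subseteq\Hy^\ell$ and the resulting orbit is genuinely the totally geodesic $\Hy^\ell$ rather than an equidistant hypersurface to it. This is exactly where the defining condition $x^{-1}W_m x\subseteq H$ of $X(H,W_m)$ and the conjugacy statement in Lemma \ref{lemma:ghw} enter; once it is in place, the remaining assertions --- closedness of the image, and the count of one or two components --- are bookkeeping with the classification of intermediate subgroups and the closedness of orbits of subgroups in $\calH$.
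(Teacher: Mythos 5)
Your proposal is correct and follows the same route as the paper: reduce via Proposition \ref{prop:xhw} to sets $Z(W_m)xH$, absorb $Z(W_m)$ into $K$, and identify the resulting $K,\Gamma$ double coset of $xH$ with a closed totally geodesic submanifold using the description of $H$ from Lemma \ref{lemma:intermediatesubgroups}. The one place you go beyond the paper is in spelling out the final identification $K\backslash KxH=\Hy^\ell$ --- using $x^{-1}W_mx\subseteq H$ and Lemma \ref{lemma:ghw} to write $x\in N_G(W_m)H$, so the moved basepoint lies on $\Hy^\ell$ and the $H$-orbit is the geodesic $\Hy^\ell$ rather than an equidistant hypersurface --- a detail the paper's proof asserts without justification; this is exactly the right point to worry about and your treatment of it is correct.
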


\begin{proof}
It is enough to see that $K\backslash K Z(W) x H\Gamma/\Gamma$ is as described. Since $Z(W) \subset K$, it can be dropped from the expression and we have $K\backslash K x H\Gamma/\Gamma$. By Lemma \ref{lemma:intermediatesubgroups},
\[
H= \mathrm{S}(C \times \Ort(\ell,1)) \cap \SO_0(n,1),
\]
where $C$ is a subgroup of $K$ and $H\cap \Gamma$ is a lattice.
Hence we are reduced to the $K, \Gamma$ double coset of $xH$ which is precisely a closed immersed totally geodesic submanifold of $M$.
\end{proof}

\noindent Theorem \ref{thm:Main} in the case where $M$ is compact is now immediate from Corollary \ref{cor:finitelymanyxhw} and Corollary \ref{cor:oneortwosubs}.

\subsection{Theorem \ref{thm:Main} in the general finite-volume case}
\label{subsection:finvolhomdyn}

The goal of this subsection is to prove an analogue of Corollary \ref{cor:finitelymanyxhw} in the case when $M$ has finite volume, after which one is reduced to repeating the proof in Section \ref{sec:GenMainThm}.
There is some difficulty in extending the argument above to the case of finite covolume, since the general version of Theorem \ref{thm:dm}, stated below, is more complicated. Our main tool for circumventing this issue is the following simple geometric lemma. We emphasize that in this subsection we use `closed' in the sense of orbits, so any immersed finite-volume totally geodesic submanifold is considered to be closed in what follows.

\begin{lemma}
\label{lemma:solvablecusps}
Let $M$ be a noncompact finite volume hyperbolic manifold of dimension $n$. Then there is a compact set $C \subset M$
such that any closed immersed totally geodesic submanifold $N$ in $M$ of dimension between $2$ and $n-1$ has nonempty intersection with $C$.
\end{lemma}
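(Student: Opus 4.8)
The plan is to take $C$ to be the complement of a family of pairwise disjoint embedded horoball cusp neighborhoods of $M$, and to show that a totally geodesic submanifold of dimension at least $2$ disjoint from $C$ would be forced, in the universal cover, to lie inside a single horoball — which is impossible.

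First I would invoke the thick--thin decomposition (or the Margulis lemma) to fix pairwise disjoint embedded horoball neighborhoods $\calC_1, \dots, \calC_t$ of the cusps of $M$, and set $C = M \ssm \bigcup_i \mathrm{int}(\calC_i)$, which is compact. Each $\calC_i$ has the form $B_i / \Ga_i$ for a horoball $B_i \subset \Hy^n$ and $\Ga_i < \pi_1(M)$ the stabilizer of the associated ideal point, and because $\calC_i$ is embedded its preimage $\pi^{-1}(\calC_i)$ in $\Hy^n$ is a \emph{disjoint} union of horoballs. Now suppose $N$ is a closed immersed totally geodesic submanifold with $2 \le \dim N \le n-1$ and $N \cap C = \emptyset$. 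Since, as a closed $W$-orbit, the image $N$ is a closed connected subset of $M$, and the sets $\mathrm{int}(\calC_i)$ are open and pairwise disjoint, $N$ must lie inside a single $\calC_i$.

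Next I would pass to the universal cover. Writing $N = (\pi \circ f)(\Hy^m)$ for a totally geodesic embedding $f : \Hy^m \to \Hy^n$ with $m = \dim N$, any connected lift $\widetilde N$ of $N$ is an isometrically embedded copy of $\Hy^m$ contained in $\pi^{-1}(\calC_i)$; as the latter is a disjoint union of horoballs and $\widetilde N$ is connected, $\widetilde N$ lies inside a single horoball $B$. But a horoball contains no complete geodesic — in the upper half-space model with $B = \{x_n > c\}$, every complete geodesic of $\Hy^n$ has points with $x_n$ arbitrarily small — whereas $\widetilde N \cong \Hy^m$ with $m \ge 2$ certainly contains complete geodesics. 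This contradiction yields $N \cap C \ne \emptyset$.

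The steps are all routine; the only points needing a word of justification are standard: that the cusp neighborhoods can be chosen embedded, pairwise disjoint, and with compact complement (the thick--thin decomposition of a finite-volume hyperbolic manifold), and that the image of a closed $W$-orbit is genuinely a closed subset of $M$, so that missing $C$ forces $N$ into $\bigcup_i \mathrm{int}(\calC_i)$. I do not expect a serious obstacle. An alternative route is to use that $N$, being a finite-volume hyperbolic $m$-manifold with $m \ge 2$, has a nonempty compact thick part and to control its image in $M$; but the horoball argument above is cleaner and in fact only uses $\dim N \ge 1$.
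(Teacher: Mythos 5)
Your proof is correct, but it takes a genuinely different route from the paper's. The paper argues algebraically: $\pi_1(N)$ is a lattice in some $\SO_0(m,1)$ with $m \ge 2$, hence is not virtually solvable, whereas cusp fundamental groups are solvable; so $N$ cannot be carried into a cusp, and therefore must meet a compact core of $M$. You instead argue geometrically: if $N$ missed the thick part it would lie in a single embedded horoball cusp neighborhood, a connected lift $\widetilde N \cong \Hy^m$ would then lie in a single horoball of $\Hy^n$, and a horoball contains no complete geodesic. Both arguments are short and sound. Yours has two small advantages: it works already for $m \ge 1$ (closed geodesics also cannot hide in a cusp), and it sidesteps the need to know that $\pi_1(N)$ injects into $\pi_1(M)$ as a lattice in $\SO_0(m,1)$ and that such a group admits no embedding into a solvable group. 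The paper's version has the advantage of not needing the horoball picture of embedded cusp neighborhoods, only the existence of a compact core. One cosmetic remark: you do not actually need the image of $N$ to be a closed subset of $M$ for your argument — since $C$ is defined as the complement of the open cusp neighborhoods, $N \cap C = \emptyset$ already forces $N$ into their union, and connectedness of $N$ then places it in a single one; so that justification can be dropped.
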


\begin{proof}
Note that each $N$ has fundamental group $\pi_1(N)$ which is a lattice in some $\SO_0(m,1)$ and injects in $\pi_1(M)$.
Moreover, the cusps of $M$ have solvable fundamental groups and consequently $\pi_1(N)$ admits no embeddings into these groups. Let $C_1$ be a compact core of $M$ \cite[p.\ 156]{Marden}, so $M\smallsetminus C_1$ is entirely contained in the cusps. Then the image of every $N$ in $M$ must intersect $C_1$. A compact subset $C$ of $M$ containing a neighborhood of $C_1$ gives the requisite compact set.
\end{proof}

We now state the general form of the theorem of Dani--Margulis \cite[Thm.\ 3]{DaniMargulis}, where in their language we assume that $K=C$, $F=\{x\}$, and the $C_i$ are all of the $X(H_i,U)$. As in the previous section, our $K$ below will be a maximal compact subgroup of $G$.

\begin{thm}
\label{thm:dm2}
Let $G$ be a connected Lie group and $\Gamma <G$ a lattice and $\mu$ be the $G$ invariant probability measure on $G/\Gamma$. Let $U=\{u_t\}$ be any $\Ad$-unipotent one parameter subgroup of $G$ and $\phi$ a bounded continuous function on $G/\Gamma$. Let $C$ be a compact subset of $G/\Gamma$ and $\epsilon >0$ be given. Then there exist finitely many proper closed subgroups $H_1, \ldots H_r$ such that $H_i \cap \Gamma$ is a lattice in $H_i$ and such that for any $x$ in
\[
C\smallsetminus\left( \bigcup_{i=1}^r (X(H_i,U)\Gamma/\Gamma\right),
\]
there exists $T_0 \geq 0$ such that
\[
\left|\frac{1}{T}\int_0^T \phi(u_tx) dt - \int_{G/\Gamma} \phi d\mu\right|<\epsilon,
\]
for all $T > T_0$.
\end{thm}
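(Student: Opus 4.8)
The statement is precisely Theorem~3 of \cite{DaniMargulis}, so rather than reproduce its proof I will only outline the argument one follows. The two inputs are Ratner's theorems: \emph{measure classification}, that every ergodic $U$-invariant probability measure on $G/\Ga$ is the homogeneous measure supported on a closed orbit $gH$ for some $H$ with $H\cap g\Ga g^{-1}$ a lattice in $H$; and \emph{pointwise equidistribution}, that for every $x\in G/\Ga$ the orbit $\{u_t x\}$ equidistributes to the homogeneous measure on $\overline{\{u_t x\}}$. For a single $x$ whose $U$-orbit is dense one already gets $\frac{1}{T}\int_0^T\phi(u_t x)\,dt\to\int\phi\,d\mu$; the substance of the statement is to make this uniform over $x$ in the compact set $C$ and to identify the finitely many subgroups $H_i$ responsible for non-uniform behaviour.

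The key technical device is the \emph{linearization} of the ``singular sets'' $X(H,U)$. For $H$ with $d=\dim\fh$, pick $p_H\in\bigwedge^d\fg$ spanning $\bigwedge^d\fh$; since $g\in X(H,U)$ amounts to $\Ad(g^{-1})\fu\subset\fh$, membership in $X(H,U)$ becomes an algebraic condition on the vector $g\cdot p_H$. Because $t\mapsto\Ad(u_t)v$ has polynomial coordinates of degree bounded in terms of $n$, such trajectories are $(C,\alpha)$-good in the sense of Kleinbock--Margulis; this yields the Dani--Margulis ``basic lemma'': for each $H$, either a $u_t$-trajectory lies entirely in a prescribed tubular neighbourhood of (the lift of) $X(H,U)\Ga/\Ga$ --- in which case shrinking the tube forces the base point to lie on $X(H,U)\Ga/\Ga$ --- or it spends only a uniformly small proportion of time there.

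With these ingredients the theorem follows by a compactness-and-contradiction scheme. If the conclusion were false for $C$ and $\epsilon$, then for every finite list $H_1,\dots,H_r$ one could find $x_k\in C\ssm\bigcup_i X(H_i,U)\Ga/\Ga$ and $T_k\to\infty$ with $\bigl|\frac{1}{T_k}\int_0^{T_k}\phi(u_tx_k)\,dt-\int\phi\,d\mu\bigr|\ge\epsilon$. Passing to a subsequence, the normalized orbit averages converge weak-$\ast$ to a $U$-invariant probability measure $\nu$ (no escape of mass, using the quantitative non-divergence estimates of Dani--Margulis over the compact set $C$). By Ratner's classification $\nu$ is a combination of homogeneous measures; it cannot be Haar measure alone, as that contradicts the $\epsilon$-gap, so some component sits on a closed orbit $gH$ with $H$ proper, and the basic lemma then forces $x_k$ into arbitrarily small tubes around $X(H,U)\Ga/\Ga$. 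Finiteness of the possible $H$ up to conjugacy --- bounded using that $gH$ has finite volume and that $x_k$ ranges over the compact $C$ --- means such an $H$ can be adjoined to our list, contradicting the choice of $x_k$. This produces the required finite family $H_1,\dots,H_r$.

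The main obstacle is this last circle of ideas: the quantitative non-divergence (needed to guarantee that the limit is a probability measure and to control time spent in the cusps) together with the inductive ``basic lemma'' on $\dim H$ that prevents the orbit of $x_k$ from concealing a positive fraction of its time near $\bigcup X(H,U)\Ga/\Ga$ while $x_k$ itself stays off that set. These estimates are the heart of \cite{DaniMargulis}; in this paper we will only invoke Theorem~\ref{thm:dm2} as a black box.
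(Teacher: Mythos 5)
Your proposal matches the paper exactly: the paper also treats this statement as a direct specialization of Theorem~3 of Dani--Margulis (taking $K=C$, $F=\{x\}$, and the $C_i$ to be the $X(H_i,U)$ in their notation) and invokes it as a black box without reproving it. Your sketch of the linearization and non-divergence machinery behind their theorem is accurate as an outline and does not change the logical status of the step.
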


The need for Lemma \ref{lemma:solvablecusps} comes from the fact that the $H_i$ depend on $C$ in this statement. We believe this is an artifact of the proof given in \cite{DaniMargulis}, but modifying their proof to avoid this dependence requires significant changes. We now show how to combine Lemma \ref{lemma:solvablecusps} and Theorem \ref{thm:dm2} to obtain our desired finiteness result.

\begin{prop}
\label{prop:finitevolume}
Fix $m$ between $2$ and $n-1$, let $W_m=\SO_0(m,1)$, and let $M$ be any finite volume hyperbolic manifold satisfying the hypotheses of Theorem \ref{thm:Main}.
Then there are finitely many subgroups $H_{1,m}, \cdots H_{j_m,m}$ of $G$ such that the set of all closed immersed totally geodesic submanifolds of $M$ of dimension $m$ is contained in
\[
\calX_m = K\backslash K\left(\bigcup_{i=1}^{j_m} X(H_{i,m},W_m)\Gamma/\Gamma\right),
\]
where $K$ is a maximal compact subgroup of $G$. In particular, the set of all closed immersed totally geodesic submanifolds of $M$ of dimension between $2$ and $n-1$ is contained in the union of the $\calX_m$.
\end{prop}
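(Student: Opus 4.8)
The plan is to mimic the argument of Corollary \ref{cor:finitelymanyxhw}, replacing Theorem \ref{thm:dm} by its general-covolume counterpart Theorem \ref{thm:dm2} and using Lemma \ref{lemma:solvablecusps} to produce a single compact set that works for all submanifolds at once. Fix $m$ between $2$ and $n-1$, set $W=W_m=\SO_0(m,1)$, and let $\Omega_m\subset G^mM$ be the open set from Proposition \ref{prop:boundaryprop}, with $V_m=\pi_m^{-1}(\Omega_m)\subset\OF(M)=G/\Gamma$ its preimage. Pick a bounded continuous function $\phi_m$ on $G/\Gamma$ with support in $V_m$ and $\int_{G/\Gamma}\phi_m\,d\mu=1$. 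As in Corollary \ref{cor:finitelymanyxhw}, any closed immersed totally geodesic submanifold $N$ of dimension $m$ lifts to a closed orbit of a conjugate of $L=\mathrm S(\Ort(n-m)\times\Ort(m,1))\cap\SO_0(n,1)$ in $G/\Gamma$; this orbit is $W$-invariant since $W<L$, its preimage avoids $V_m$ by Proposition \ref{prop:boundaryprop}, and hence any one-parameter $\Ad$-unipotent trajectory inside it has $\frac1T\int_0^T\phi_m(u_tx)\,dt=0$ for all $T$.

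The new ingredient is that Theorem \ref{thm:dm2} produces subgroups $H_1,\dots,H_r$ depending on a prescribed compact set $C\subset G/\Gamma$. So first I would invoke Lemma \ref{lemma:solvablecusps}: there is a single compact set $C_0\subset M$ meeting every closed immersed totally geodesic submanifold of dimension between $2$ and $n-1$; lift $C_0$ to a compact set $C\subset G/\Gamma$ (e.g. its preimage under $G/\Gamma=\OF(M)\to M$, which is compact since $\OF$ has compact fiber $\SO(n)$). Apply Theorem \ref{thm:dm2} with this $C$, the function $\phi_m$, a fixed unipotent $U=\{u_t\}<W$, and $\epsilon=\tfrac12$: this yields finitely many proper closed subgroups $H_1,\dots,H_r$ with $H_i\cap\Gamma$ a lattice, such that any $x\in C$ not lying in $\bigcup_i X(H_i,U)\Gamma/\Gamma$ has time averages of $\phi_m$ converging to $\int\phi_m\,d\mu=1\neq 0$. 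Given a closed totally geodesic submanifold $N$ of dimension $m$, its associated closed orbit $\calO_N$ meets $C$ (by the choice of $C_0$ and Lemma \ref{lemma:solvablecusps}), pick a point $x\in\calO_N\cap C$; since the unipotent trajectory through $x$ stays in $\calO_N$ and hence has vanishing time averages for $\phi_m$, we conclude $x\in X(H_i,U)\Gamma/\Gamma$ for some $i$. As $\calO_N$ is a closed $W$-orbit, $W$-invariance of $\bigcup_i X(H_i,U)\Gamma/\Gamma$ fails in general, but $X(H,U)\supseteq X(H,W)$ and we instead run the argument exactly as in Corollary \ref{cor:finitelymanyxhw}: varying the unipotent one-parameter subgroups $U_1,\dots,U_s$ generating $W$, applying Theorem \ref{thm:dm2} to each (enlarging the finite family of $H_i$ if necessary, all still adapted to the same $C$), and intersecting, Lemma \ref{lemma:onetomany} gives $\bigcap_j X(H_i,U_j)=X(H_i,W)$, so that in fact $x\in X(H_{i,m},W_m)\Gamma/\Gamma$ for one of finitely many subgroups $H_{1,m},\dots,H_{j_m,m}\in\calH$. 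Passing to the image in $M$ under $K\backslash K(\cdot)$, every closed immersed totally geodesic submanifold of dimension $m$ lies in $\calX_m$. Running this for every $m$ between $2$ and $n-1$ gives the final statement.

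I expect the only genuine subtlety to be the bookkeeping of the dependence of the $H_i$ on $C$: one must apply Theorem \ref{thm:dm2} with the single compact set $C$ before quantifying over all $N$, which is exactly why Lemma \ref{lemma:solvablecusps} is needed, and one must handle finitely many unipotent generators $U_1,\dots,U_s$ of $W_m$ simultaneously, taking the union of the resulting (finite) families of subgroups and then the intersection of the corresponding $X(H_i,U_j)$; Lemma \ref{lemma:onetomany} guarantees this intersection collapses to $X(H_i,W_m)$. After this Proposition is established, $\calX_m$ has the same form as in the cocompact case, so Proposition \ref{prop:xhw} and Corollary \ref{cor:oneortwosubs} apply verbatim — each $K\backslash KX(H_{i,m},W_m)\Gamma/\Gamma$ is one or two closed totally geodesic submanifolds of $M$ — and Theorem \ref{thm:Main} in the general finite-volume case follows just as in Section \ref{sec:GenMainThm}.
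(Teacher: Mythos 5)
Your proposal follows essentially the same route as the paper's proof: use Lemma \ref{lemma:solvablecusps} to fix a single compact set $C$ meeting every relevant closed $L$-orbit, apply Theorem \ref{thm:dm2} with that $C$ and a test function supported in $V_m$ to each of finitely many unipotent generators of $W_m$, and collapse the resulting intersections via Lemma \ref{lemma:onetomany}. The argument is correct and matches the paper's, including the key observation that the compact set must be chosen uniformly before quantifying over submanifolds.
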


\begin{proof}
 As in the proof of Corollary \ref{cor:finitelymanyxhw}, we see that each totally geodesic submanifold of $M$ of dimension $m$ gives rise to a closed orbit of
\[
L=\mathrm{S}(\Ort(n-m) \times \Ort(m,1)) \cap \SO_0(n,1).
\]
By Lemma \ref{lemma:solvablecusps}, there is a compact subset $C$ in $G/\Gamma$ such that any closed $L$ orbit intersects $C$. We also know that there are finitely many unipotent subgroups $U_1, \ldots, U_{s_m}$ that generate $W_m=\SO_0(m,1)<L$, and so
\[
X(H, W_m) = \bigcap_{j=1}^{s_m} X(H, U_j),
\]
for any $H\in\calH$.

We apply Theorem \ref{thm:dm2} to each $U_j$ using $C$ as the compact set, taking $\epsilon$ to be any fixed number $0<\epsilon<1$, and with $\phi_m$ a compactly supported function with integral $1$ and support in $V_m=\pi^{-1}(\Omega_m)$. Given a closed $L$ orbit $\mathcal{O}$, we pick a point $x$ contained in $\mathcal{O} \cap C$. For any $U_j$ the time average of $\phi_m$ over the $x$ orbit is zero, and so by the above there are finitely many $H_{i,j,m}$ such that $x$ is in one of the $X(H_{i,j,m}, U_j)$. Taking intersection over $j$ and re-indexing the resulting collection of subgroups completes the proof for fixed $m$. Repeating the argument in each dimension establishes the last statement in the corollary and completes the proof.
\end{proof}

\noindent Given Proposition \ref{prop:finitevolume}, the proof of Theorem \ref{thm:Main} for finite-volume manifolds proceeds exactly as in the compact case.

\section{Examples}\label{sec:Exs}

\subsection{Hyperbolic $3$-manifolds and link complements}\label{subsec:Links}{\ }

\medskip

\noindent
We begin with a few examples to give a sense of the known possibilities for existence of arithmetic/non-arithmetic hyperbolic $3$-manifolds with certain behavior for their geodesic surfaces.

\begin{ex}\label{ex:Fig8}
The figure-eight knot complement contains infinitely many distinct immersed totally geodesic surfaces. They may be taken to be closed. \cite[\S 9.6]{MaclachlanReid}
\end{ex}

\begin{ex}\label{ex:NoSurfacesA}
The Weeks manifold \cite[\S 13.5, Ex.\ 1]{MaclachlanReid} is arithmetic and contains no totally geodesic surfaces.
\end{ex}

\begin{ex}\label{ex:NoSurfacesNA}
Performing $(-4,1)$-surgery on the sister of the figure-eight knot complement \cite[\S 13.5, Ex.\ 8]{MaclachlanReid} determines a non-arithmetic closed hyperbolic manifold $M$ of volume $1.42361...$\ that satisfies the conditions of \cite[Thm.\ 5.3.1]{MaclachlanReid}, and hence contains no totally geodesic surfaces.
\end{ex}

\begin{ex}\label{ex:NoCompactSurfaces}
Consider the three-twist knot $5_2$. It has (invariant) trace field the cubic extension of $\Q$ with minimal polynomial $t^3 - t^2 + 1$. It follows from \cite[Thm.\ 5.3.8]{MaclachlanReid} that the associated hyperbolic knot complement contains no \emph{closed} totally geodesic surfaces. It does, however, contain an immersed totally geodesic $3$-punctured sphere (see \cite{ReidTG}).
\end{ex}

\begin{ex}
Calegari showed that fibered knots (more generally, fibered knots in rational homology spheres) with trace field of odd degree cannot contain immersed totally geodesic surfaces \cite{Calegari}. For instance, this holds for the knot $8_{20}$ in \cite{Rolfsen}.
\end{ex}

\noindent
As further motivation, we recall the Menasco--Reid conjecture.

\begin{conjec}[Menasco--Reid \cite{MenascoReid}]\label{conjec:MenRd}
Let $K$ be a hyperbolic knot. Then $S^3 \ssm K$ does not contain a closed embedded totally geodesic surface.
\end{conjec}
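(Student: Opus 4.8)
The Menasco--Reid conjecture remains open, so what follows is not a proof but a sketch of the most natural line of attack through the framework of this paper, together with an honest assessment of where the genuine difficulty lies. The plan is to argue by contradiction: suppose $K\subset S^3$ is a hyperbolic knot and $F\subset S^3\ssm K$ is a closed embedded totally geodesic surface, which we may take to be connected of genus $g\ge 2$. The first step is purely topological. By Alexander duality $H_2(S^3\ssm K;\Z)=0$, so $F$ separates $S^3\ssm K$ into two pieces, exactly one of which, call it $W_2$, contains the cusp; the other piece $W_1$ is then a compact hyperbolic $3$-manifold with connected totally geodesic boundary $F$. Because $F$ is totally geodesic it is $\pi_1$-injective, so $\pi_1(W_1)$ and $\pi_1(W_2)$ inject into $\pi_1(S^3\ssm K)$; and since each $W_i$ is non-elementary (as $\pi_1(F)$ is) and genuinely $3$-dimensional, neither group preserves a totally geodesic hyperbolic plane, so each is Zariski dense in $\Isom^+(\Hy^3)$. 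The one cusp of $W_2$ is the cusp of $S^3\ssm K$, hence of full rank. Thus, in the terminology of \S\ref{ssec:GPS}, the decomposition $S^3\ssm K=W_1\cup_F W_2$ exhibits the knot complement as built from two adjacent building blocks, with $F$ as the cutting hypersurface.

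The second step is to feed this decomposition into the arithmetic and dynamical machinery of the paper. If it happens that $W_1$ and $W_2$ can be taken arithmetic and dissimilar, then Theorem \ref{thm:Main} already bounds the number of finite-area totally geodesic surfaces in $S^3\ssm K$, and one would aim to eliminate the finitely many candidates by a direct invariant-trace-field computation in the style of \cite[Thm.\ 5.3.1]{MaclachlanReid}. More promisingly, in the arithmetic regime one should upgrade finiteness to nonexistence using the Reid and Maclachlan--Reid obstruction \cite{ReidTG, MaclachlanReid}: a closed embedded --- hence two-sided --- totally geodesic surface in an arithmetic knot complement forces the invariant quaternion algebra to be unramified over an index-two totally real subfield of the invariant trace field, and one checks this is incompatible with the constraint $H_1(S^3\ssm K;\Z)\cong\Z$. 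Calegari's fibering criterion \cite{Calegari} disposes of the fibered case independently of arithmeticity, and the parity arguments of \cite{MaclachlanReid} handle many further families.

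The main obstacle is the generic case: a non-arithmetic, non-fibered hyperbolic knot whose complement carries no arithmetic building-block structure at all. Here neither Margulis' commensurator theorem, nor Angle Rigidity (Theorem \ref{thm:GPSFiniteness}), nor the equidistribution input of \S\ref{section:nimish} applies, and one is left only with the bare topological datum that $W_1$ is a compact hyperbolic manifold with a single totally geodesic boundary component embedded in $S^3$. Extracting a contradiction from this alone --- for instance by forcing a compressing disk or an essential annulus for $F$ inside $S^3\ssm K$, which would contradict the incompressibility of $F$ --- is precisely the crux, and it does not appear reachable by the methods of the present paper. I expect a genuinely new ingredient, relating totally geodesic surfaces to the bridge structure or the essential surface theory of $K$, will be required to settle the conjecture in full; what this paper contributes is the reduction above and the resolution of the arithmetically structured cases.
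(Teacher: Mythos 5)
You were right not to claim a proof: the statement is labelled a \emph{conjecture} in the paper, attributed to Menasco--Reid, and the paper contains no proof of it. What the paper actually establishes is the much weaker Theorem \ref{thm:MenRdKnot}: for knots whose complements satisfy the hypotheses of Theorem \ref{thm:Main} (two adjacent arithmetic, dissimilar building blocks), there are only \emph{finitely many} closed immersed totally geodesic surfaces --- finiteness, not nonexistence, and only under a strong structural hypothesis. Your reduction (Alexander duality forces $F$ to separate, yielding a decomposition of $S^3 \ssm K$ into two blocks along $F$) and your identification of the crux --- that nothing forces these blocks to be arithmetic, so Angle Rigidity and the equidistribution input have no purchase in the generic case --- accurately reflect the limits of the paper's methods.

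One substantive correction to your second step. You suggest that in the arithmetic regime one can upgrade finiteness to nonexistence via the Maclachlan--Reid trace-field/quaternion-algebra obstruction. This cannot work as stated: by Reid's theorem the figure-eight knot is the unique arithmetic knot in $S^3$, and Example \ref{ex:Fig8} of the paper records that its complement contains infinitely many closed \emph{immersed} totally geodesic surfaces. So the arithmetic invariants do not obstruct closed totally geodesic surfaces there; what fails is embeddedness, which is a topological condition invisible to the invariant trace field and quaternion algebra. The figure-eight case of the conjecture is instead settled by Menasco--Reid's own topological arguments (it is alternating and tunnel number one). This is consistent with your overall assessment: the genuinely new ingredient needed is one that sees embeddedness, and neither the arithmetic obstructions nor the dynamics of this paper does.
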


In \cite{MenascoReid}, Menasco and Reid prove the conjecture for tunnel number one knots. Our results then have the following `almost' version of the conjecture, which also allows one to promote embedded to immersed:

\begin{thm}\label{thm:MenRdKnot}
Let $K$ be a hyperbolic knot for which $S^3 \ssm K$ satisfies the conditions of Theorem \ref{thm:Main}. Then $S^3 \ssm K$ contains only finitely many closed, immersed totally geodesic surfaces.
\end{thm}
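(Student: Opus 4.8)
The plan is to deduce Theorem \ref{thm:MenRdKnot} directly from Theorem \ref{thm:Main} by checking that a totally geodesic surface in $S^3 \ssm K$ is automatically maximal, so that the word ``maximal'' in Theorem \ref{thm:Main} can be dropped in this setting. First I would recall that $S^3 \ssm K$ is a hyperbolic $3$-manifold, so the ambient dimension is $n = 3$, and a totally geodesic surface has codimension $k = 1 = n - 2$; thus it falls within the range $1 \le k \le n-2$ covered by Theorem \ref{thm:Main}. A geodesic submanifold of codimension $1$ in an $n$-manifold can only fail to be maximal if it is properly contained in a geodesic submanifold of smaller codimension, i.e.\ of codimension $0$; but a codimension $0$ totally geodesic submanifold of a connected hyperbolic manifold is the whole manifold, which is not a proper submanifold. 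Hence every codimension one geodesic submanifold is maximal, and in dimension $3$ every totally geodesic \emph{surface} is therefore maximal.

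Given this observation, the argument is short. Suppose $S^3 \ssm K$ satisfies the hypotheses of Theorem \ref{thm:Main}, i.e.\ it is non-arithmetic, built from building blocks, and has two adjacent building blocks that are arithmetic and dissimilar. By Theorem \ref{thm:Main} applied with $k = 1$, the collection of all maximal codimension one finite-volume totally geodesic submanifolds of $S^3 \ssm K$ is finite. By the preceding paragraph, \emph{every} finite-volume totally geodesic surface in $S^3 \ssm K$ is maximal, so this finite collection is in fact the collection of all finite-volume totally geodesic surfaces. In particular the closed immersed totally geodesic surfaces (which are automatically finite-volume, being compact) form a finite subcollection, which is the assertion of the theorem.

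The only subtlety worth spelling out is the passage from ``totally geodesic surface'' in the statement to ``finite-volume totally geodesic submanifold'' in Theorem \ref{thm:Main}: the theorem statement of Theorem \ref{thm:MenRdKnot} restricts attention to \emph{closed} (hence compact, hence finite-volume) immersed surfaces, so there is no issue, but I would note in passing that the same argument gives finiteness for all finite-area totally geodesic surfaces, compact or not, matching the phrasing in Theorem \ref{thm:LinkMain}. I do not expect any real obstacle here; the content of the theorem is entirely in Theorem \ref{thm:Main}, and the present statement is the observation that in the knot-complement setting maximality is automatic, together with the standard fact that a hyperbolic knot complement is a finite-volume hyperbolic $3$-manifold so that the dimension hypothesis $n \ge 3$ of Theorem \ref{thm:Main} is met.
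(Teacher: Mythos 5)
Your proposal is correct and matches the paper's (implicit) argument: the theorem is stated there as an immediate consequence of Theorem \ref{thm:Main}, and your observation that a codimension one geodesic submanifold is automatically maximal (since a proper codimension zero complete totally geodesic submanifold of a connected hyperbolic manifold cannot exist) is exactly the point that lets one drop the word ``maximal'' in this setting. The dimension check ($n=3$, $k=1=n-2$) and the remark that closed surfaces are finite-volume are both handled correctly.
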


Menasco and Reid also proved that alternating hyperbolic links cannot contain embedded closed totally geodesic surfaces \cite[Thm.\ 1]{MenascoReid}. Such a link can certainly contain punctured totally geodesic surfaces, and, when Theorem \ref{thm:Main} applies, we see that $S^3 \ssm L$ can only contain finitely many such surfaces. To prove Theorem \ref{thm:LinkMain}, we must produce an infinite family of examples to which Theorem \ref{thm:Main} will apply. We do this via the operation on links called the \emph{belted sum}, which is justified by the following theorem of Adams.

\begin{thm}[Adams \cite{Adams}]\label{thm:Adams}
Let $M$ be a complete finite-volume hyperbolic $3$-manifold. Then any incompressible and properly embedded $3$-punctured sphere in $M$ is isotopic to one that is totally geodesic.
\end{thm}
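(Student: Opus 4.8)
The plan is to combine the algebraic rigidity of the thrice-punctured sphere with a geometric analysis of the lifts of the given surface. Write $M=\Hy^3/\Gamma$ and let $S\subset M$ be an incompressible, properly embedded thrice-punctured sphere; in particular $M$ has a cusp, so the truncated manifold is Haken. Then $F=\pi_1(S)<\Gamma$ is free of rank $2$, and since $S$ is properly embedded each of its three ends runs out a cusp of $M$, so the three peripheral classes — a free basis $a,b$ together with $ab$ — are all parabolic in $\Gamma$. A one-line $\SL_2$ computation then pins down $F$: normalizing so that $a$ is $z\mapsto z+1$ (parabolic fixing $\infty$) and $b$ is a parabolic fixing $0$ with lower-left entry $t$, one has $\mathrm{tr}(ab)=t+2$, so parabolicity of $ab$ forces $t\in\{0,-4\}$, and faithfulness of the holonomy excludes $t=0$; hence $t=-4$ and $F<\PSL_2(\R)$. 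Consequently $F$ preserves the totally geodesic plane $P\subset\Hy^3$ spanned by its limit circle $C$, and $T:=P/F$ is an immersed finite-area totally geodesic thrice-punctured sphere whose inclusion induces, up to conjugacy in $\Gamma$, the same map on $\pi_1$ as $S$. Since $S$, $T$, and $M$ are aspherical, the inclusions $S\hookrightarrow M$ and $T\to M$ are freely homotopic (and one can arrange the homotopy to carry cusps to cusps).

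The second step is to show that $T$ is in fact embedded. The subgroup $F$ is geometrically finite with limit set the round circle $C$, so for each $\gamma\in\Gamma$ the translate $\gamma\widetilde S$ of a fixed lift $\widetilde S$ of $S$ has the same ideal boundary as the geodesic plane $\gamma P$, namely the round circle $\gamma C$. Because $S$ is embedded, distinct $\Gamma$-translates of $\widetilde S$ are disjoint properly embedded planes, hence their ideal boundary circles are pairwise unlinked in $\partial^\infty\Hy^3$; but two geodesic planes with unlinked (or equal) boundary circles are disjoint (or equal), so the $\Gamma$-translates of $P$ are pairwise disjoint or equal. Finally, the stabilizer $\Gamma_C<\Gamma$ of $C$ is a finite-covolume Fuchsian group containing $F$ with $\mathrm{area}(P/\Gamma_C)\le\mathrm{area}(P/F)=2\pi$; since $M$ is a manifold, $\Gamma$ — hence $\Gamma_C$ — is torsion-free, and every hyperbolic surface has area at least $2\pi$, so the covering $P/F\to P/\Gamma_C$ has degree one and $\Gamma_C=F$. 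Putting these together, $P$ is disjoint from each of its distinct $\Gamma$-translates, so $T=P/F\hookrightarrow M$ is an embedding, and being freely homotopic to $S$ it is two-sided and incompressible.

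The last step is to upgrade the free homotopy between the embedded incompressible surfaces $S$ and $T$ to an ambient isotopy. Truncating the cusps turns $S$ and $T$ into properly embedded, two-sided, incompressible surfaces in a compact Haken manifold $M_0$ with toral (or Klein-bottle) boundary, and one checks that an incompressible thrice-punctured sphere is $\partial$-incompressible there, since a $\partial$-compression would cut the pair of pants into an annulus that is either boundary-parallel (so the compression was inessential, and we isotope) or essential — and a finite-volume hyperbolic manifold contains no essential annulus. The classical homotopy-implies-isotopy theorem for two-sided incompressible $\partial$-incompressible surfaces in Haken manifolds (Waldhausen) then shows $S$ is isotopic to $T$, which is totally geodesic, as desired.

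The algebraic input (Step one) is essentially a trivial $\SL_2$ identity; \textbf{the main obstacle is the topological half}, Steps two and three. Specifically, the delicate points are (a) ruling out ``hidden symmetry'' of the geodesic representative — that is, showing the stabilizer of the limit circle is no larger than $F$, which is exactly where torsion-freeness of $\Gamma$ and the minimal-area property of hyperbolic surfaces are used — together with excluding transverse self-intersections of the translates of $P$ via the linking argument; and (b) carrying out the homotopy-to-isotopy step with proper care for the cusps and for $\partial$-incompressibility. I expect (a) and the cusped version of (b) to require the bulk of the work, whereas the rigidity and the asphericity argument are routine.
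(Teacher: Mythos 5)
The paper does not prove this statement: it is quoted directly from Adams \cite{Adams} and used as a black box in \S\ref{subsec:Links}, so there is no internal proof to compare against. Your argument is, in substance, a correct reconstruction of Adams' original proof: (i) the trace identity $\tr(ab)=t+2$ forcing $t=-4$, so that $F=\pi_1(S)$ is conjugate into $\PSL_2(\R)$ and stabilizes a geodesic plane $P$ with $P/F$ a totally geodesic thrice-punctured sphere in the correct free homotopy class; (ii) embeddedness of $P/F$ via disjointness of the $\Gamma$-translates of $P$ together with the Gauss--Bonnet/torsion-freeness argument showing $\Stab_\Gamma(C)=F$; (iii) Waldhausen's homotopy-implies-isotopy theorem for two-sided incompressible, $\partial$-incompressible surfaces in the truncated (Haken) manifold. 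Two points you compress deserve a sentence each in a full write-up, though neither is a gap. First, the ``unlinked circles'' step needs the prior observation that the accumulation set of a lift $\widetilde S$ on $\partial^\infty\Hy^3$ is \emph{exactly} $C=\Lambda(F)$; this uses that $S$ is properly embedded in a finite-volume manifold, so its compact core stays a bounded distance from an $F$-orbit and its three ends lie in horoballs based at parabolic fixed points of $F$ -- and one then needs the separation statement that a properly embedded plane whose closure meets $S^2_\infty$ in a round circle separates $\overline{\Hy^3}$ compatibly with the two disks bounded by that circle. Second, to invoke Waldhausen you must first isotope so that $\partial S$ and $\partial T$ agree on each boundary torus and the free homotopy becomes a proper homotopy of pairs; this works because homotopic essential simple closed curves on a torus are isotopic and the corresponding ends of $S$ and $T$ exit the same cusp. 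With those remarks supplied, the proof is complete and matches Adams' approach.
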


Let $M_1, M_2$ be noncommensurable arithmetic hyperbolic $3$-manifolds each containing an embedded totally geodesic $3$-punctured spheres. Since the hyperbolic structure on a $3$-punctured sphere is unique, we can cut $M_1$ and $M_2$ open along these surfaces and obtain a Gromov and Piatetski-Shapiro non-arithmetic manifold. The proof of \cite[Thm.\ 4.1]{Adams} makes this connection very explicit. In particular, Theorem \ref{thm:Main} applies and the hybrid manifold has finitely many totally geodesic surfaces.

We will apply this in the case where $M_i = S^3 \ssm L_i$ for $L_i$ a link containing an unknotted component as in Figure \ref{fig:2Disk}. This contains an obvious properly embedded $2$-punctured disk $D_i$, which is then isotopic to a totally geodesic $3$-punctured sphere in $M_i$. One can then glue $M_1 \ssm D_1$ to $M_2 \ssm D_2$ to obtain a new link $L_{1,2}$ in $S^3$, which is visibly built from arithmetic building blocks. This link is the belted sum of $L_1$ and $L_2$.

\definecolor{linkcolor0}{rgb}{0.85, 0.15, 0.15}
\definecolor{linkcolor1}{rgb}{0.15, 0.15, 0.85}
\begin{figure}
\centering
\begin{tikzpicture}[line width = 1.5, line cap=round, line join=round,scale=0.3]
\draw [yellow, fill=yellow, fill opacity=0.2, draw opacity=0.01] (2.49, 15.27) .. controls (2.49, 15.27) and (2.49, 12.37) .. (2.49, 12.37) .. controls (2.08, 12.77) and (2.08, 12.77) .. (1.70, 12.37) .. controls (1.70, 12.37) and (1.70, 15.27) .. (1.70, 15.27);
\draw [yellow, fill=yellow, fill opacity=0.2, draw opacity=0.01] (8.36, 15.27) .. controls (8.36, 15.27) and (8.36, 12.37) .. (8.36, 12.37) .. controls (7.99, 12.77) and (7.99, 12.77) .. (7.61, 12.37) .. controls (7.61, 12.37) and (7.61, 15.27) .. (7.61, 15.27);
\draw [opacity=0.2] (2.04, 12.37) circle (0.35cm);
\draw [opacity=0.2] (8.03, 12.37) circle (0.35cm);
\begin{scope}[color=linkcolor1]
\draw (2.04, 15.27) .. controls (2.04, 15.27) and (2.04, 18) .. (2.04, 18);
\draw (2.04, 15.27) .. controls (2.04, 13.79) and (2.04, 12.30) .. (2.04, 10.82);
\draw (2.04, 9.93) .. controls (2.04, 9.93) and (2.04, 6) .. (2.04, 6);
\draw (8.03, 9.93) .. controls (8.03, 9.93) and (8.03, 6) .. (8.03, 6);
\draw (8.03, 10.92) .. controls (8.03, 12.37) and (8.04, 13.82) .. (8.05, 18);
\end{scope}
\draw [yellow, fill=yellow, fill opacity=0.2, draw opacity=0.01] (1.70, 15.27) .. controls (0.67, 15.27) and (0.34, 14.00) .. (0.32, 12.81) .. controls (0.31, 11.57) and (0.93, 10.36) .. (2.04, 10.37) .. controls (4.03, 10.41) and (6.03, 10.44) .. (8.02, 10.47) .. controls (9.10, 10.49) and (9.64, 11.68) .. (9.63, 12.88) .. controls (9.61, 14.03) and (9.35, 15.27) .. (8.36, 15.27) .. controls (8.36, 15.27) and (8.36, 12.37) .. (8.36, 12.37) .. controls (7.99, 11.87) and (7.99, 11.87) .. (7.61, 12.37) .. controls (7.61, 12.37) and (7.61, 15.27) .. (7.61, 15.27) .. controls (7.61, 15.27) and (2.49, 15.27) .. (2.49, 15.27) .. controls (2.49, 15.27) and (2.49, 12.37) .. (2.49, 12.37) .. controls (2.08, 11.77) and (2.08, 11.77) .. (1.70, 12.37);
\begin{scope}[color=linkcolor0]
\draw (1.70, 15.27) .. controls (0.67, 15.27) and (0.34, 14.00) .. (0.32, 12.81) .. controls (0.31, 11.57) and (0.93, 10.36) .. (2.04, 10.37);
\draw (2.04, 10.37) .. controls (4.03, 10.41) and (6.03, 10.44) .. (8.02, 10.47);
\draw (8.02, 10.47) .. controls (9.10, 10.49) and (9.64, 11.68) .. (9.63, 12.88) .. controls (9.61, 14.03) and (9.35, 15.27) .. (8.36, 15.27);
\draw (7.61, 15.27) .. controls (5.90, 15.27) and (4.19, 15.27) .. (2.49, 15.27);
\end{scope}
\end{tikzpicture}
\caption{An essential $2$-punctured disk from an unknotted component.}\label{fig:2Disk}
\end{figure}
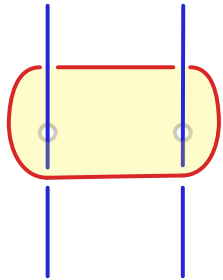

\medskip

The belted sum of noncommensurable arithmetic links will be a non-arithmetic link built out of two dissimilar hyperbolic building blocks. To prove Theorem \ref{thm:LinkMain}, we use an arithmetic link containing two trivial summands as in Figure \ref{fig:2Disk}. See Figure \ref{fig:ArithLink} for some arithmetic links with this property. We chose these links to be mutually incommensurable. The first is the Whitehead link, which is commensurable with $\PSL_2(\Z[i])$. The second is the three chain link, which is commensurable with $\PSL_2\left(\Z[\frac{1+\sqrt{-7}}{2}]\right)$. The third is a five component link commensurable with $\PSL_2\left(\Z[\frac{1+\sqrt{-15}}{2}]\right)$. See \cite[\S 9.2]{MaclachlanReid}.

\begin{figure}
\definecolor{linkcolor0}{rgb}{0.85, 0.15, 0.15}
\definecolor{linkcolor1}{rgb}{0.15, 0.15, 0.85}
\definecolor{linkcolor2}{rgb}{0.15, 0.85, 0.15}
\definecolor{linkcolor3}{rgb}{0.15, 0.85, 0.85}
\definecolor{linkcolor4}{rgb}{0.85, 0.15, 0.85}
\begin{tikzpicture}[line width=1, line cap=round, line join=round, scale=0.25]
  \begin{scope}[color=linkcolor0]
    \draw (0.88, 6.93) .. controls (0.38, 6.94) and (0.14, 6.37) ..
          (0.14, 5.80) .. controls (0.14, 5.20) and (0.52, 4.66) .. (1.08, 4.65);
    \draw (1.08, 4.65) .. controls (1.76, 4.64) and (2.45, 4.63) .. (3.13, 4.62);
    \draw (3.13, 4.62) .. controls (3.70, 4.62) and (4.12, 5.15) ..
          (4.11, 5.75) .. controls (4.11, 6.33) and (3.86, 6.90) .. (3.35, 6.91);
    \draw (2.95, 6.91) .. controls (2.39, 6.92) and (1.83, 6.92) .. (1.28, 6.93);
  \end{scope}
  \begin{scope}[color=linkcolor1]
    \draw (7.32, 7.43) .. controls (6.34, 7.46) and (6.37, 5.90) ..
          (6.41, 4.58) .. controls (6.44, 3.17) and (5.99, 1.71) ..
          (4.79, 1.75) .. controls (3.63, 1.78) and (3.11, 3.11) .. (3.13, 4.41);
    \draw (3.13, 4.83) .. controls (3.14, 5.53) and (3.15, 6.22) .. (3.16, 6.91);
    \draw (3.16, 6.91) .. controls (3.17, 8.41) and (3.92, 9.87) ..
          (5.27, 9.86) .. controls (6.44, 9.84) and (7.33, 8.83) .. (7.33, 7.64);
    \draw (7.32, 7.22) .. controls (7.32, 6.47) and (8.02, 5.93) .. (8.82, 5.93);
    \draw (8.82, 5.93) .. controls (9.76, 5.92) and (9.77, 7.29) ..
          (9.79, 8.46) .. controls (9.80, 10.33) and (7.57, 11.05) ..
          (5.43, 11.10) .. controls (3.05, 11.16) and (1.06, 9.29) .. (1.07, 6.93);
    \draw (1.07, 6.93) .. controls (1.07, 6.24) and (1.07, 5.55) .. (1.08, 4.86);
    \draw (1.08, 4.44) .. controls (1.09, 2.21) and (2.77, 0.34) ..
          (4.97, 0.24) .. controls (7.46, 0.13) and (8.84, 2.89) .. (8.82, 5.72);
    \draw (8.81, 6.14) .. controls (8.81, 6.88) and (8.11, 7.41) .. (7.32, 7.43);
  \end{scope}
\end{tikzpicture} \quad
\begin{tikzpicture}[line width=1, line cap=round, line join=round, scale=0.35]
  \begin{scope}[color=linkcolor0]
    \draw (0.80, 5.50) .. controls (0.27, 5.51) and (0.14, 4.84) ..
          (0.13, 4.22) .. controls (0.12, 3.58) and (0.38, 2.92) .. (0.95, 2.91);
    \draw (0.95, 2.91) .. controls (1.30, 2.90) and (1.64, 2.89) .. (1.99, 2.89);
    \draw (1.99, 2.89) .. controls (2.74, 2.87) and (3.39, 3.42) ..
          (3.40, 4.14) .. controls (3.41, 4.84) and (2.88, 5.44) .. (2.19, 5.46);
    \draw (1.79, 5.47) .. controls (1.58, 5.48) and (1.37, 5.48) .. (1.16, 5.49);
  \end{scope}
  \begin{scope}[color=linkcolor1]
    \draw (9.26, 5.20) .. controls (9.26, 7.00) and (7.17, 7.78) ..
          (5.12, 7.80) .. controls (3.10, 7.82) and (0.97, 7.24) .. (0.96, 5.50);
    \draw (0.96, 5.50) .. controls (0.96, 4.70) and (0.95, 3.90) .. (0.95, 3.11);
    \draw (0.94, 2.71) .. controls (0.93, 0.92) and (3.05, 0.21) ..
          (5.10, 0.18) .. controls (7.29, 0.14) and (9.26, 1.60) .. (9.26, 3.68);
    \draw (9.26, 3.68) .. controls (9.26, 4.05) and (9.26, 4.42) .. (9.26, 4.80);
  \end{scope}
  \begin{scope}[color=linkcolor2]
    \draw (8.62, 4.21) .. controls (7.95, 4.76) and (7.21, 5.23) ..
          (6.41, 5.56) .. controls (5.52, 5.93) and (4.58, 6.14) ..
          (3.62, 6.27) .. controls (2.82, 6.38) and (1.99, 6.14) .. (1.99, 5.47);
    \draw (1.99, 5.47) .. controls (1.99, 4.67) and (1.99, 3.88) .. (1.99, 3.09);
    \draw (1.99, 2.69) .. controls (1.99, 2.00) and (2.78, 1.66) ..
          (3.57, 1.74) .. controls (4.53, 1.84) and (5.47, 2.05) ..
          (6.36, 2.43) .. controls (7.19, 2.78) and (7.91, 3.36) .. (8.49, 4.06);
    \draw (8.74, 4.37) .. controls (8.92, 4.58) and (9.09, 4.79) .. (9.26, 5.00);
    \draw (9.26, 5.00) .. controls (9.44, 5.21) and (9.81, 4.95) ..
          (9.80, 4.45) .. controls (9.80, 3.89) and (9.66, 3.34) .. (9.37, 3.59);
    \draw (9.13, 3.78) .. controls (8.96, 3.93) and (8.79, 4.07) .. (8.62, 4.21);
  \end{scope}
\end{tikzpicture} \quad
\begin{tikzpicture}[line width=0.75, line cap=round, line join=round,scale=0.5]
  \begin{scope}[color=linkcolor0]
    \draw (6.32, 5.66) .. controls (6.32, 5.94) and (5.98, 6.03) ..
          (5.67, 6.03) .. controls (5.34, 6.04) and (5.01, 5.88) .. (5.02, 5.58);
    \draw (5.02, 5.58) .. controls (5.02, 5.35) and (5.02, 5.11) .. (5.02, 4.87);
    \draw (5.02, 4.87) .. controls (5.03, 4.55) and (5.34, 4.33) ..
          (5.69, 4.32) .. controls (6.02, 4.32) and (6.35, 4.47) .. (6.35, 4.76);
    \draw (6.34, 5.00) .. controls (6.34, 5.15) and (6.33, 5.30) .. (6.33, 5.45);
  \end{scope}
  \begin{scope}[color=linkcolor1]
    \draw (6.33, 5.57) .. controls (5.93, 5.58) and (5.53, 5.58) .. (5.14, 5.58);
    \draw (4.90, 5.59) .. controls (2.73, 5.60) and (0.65, 4.43) .. (0.67, 2.46);
    \draw (0.67, 2.46) .. controls (0.67, 1.55) and (1.98, 1.54) ..
          (3.11, 1.52) .. controls (4.08, 1.51) and (5.54, 1.49) .. (5.94, 1.75);
    \draw (6.13, 1.88) .. controls (6.44, 2.08) and (6.83, 2.12) .. (7.20, 2.12);
    \draw (7.43, 2.11) .. controls (7.69, 2.11) and (7.94, 2.11) .. (8.19, 2.11);
    \draw (8.19, 2.11) .. controls (8.70, 2.11) and (9.30, 2.11) .. (9.30, 1.71);
    \draw (9.30, 1.71) .. controls (9.30, 0.12) and (6.80, 0.11) ..
          (4.69, 0.10) .. controls (2.69, 0.09) and (0.08, 0.08) ..
          (0.08, 1.01) .. controls (0.08, 1.52) and (0.20, 2.04) .. (0.58, 2.38);
    \draw (0.75, 2.53) .. controls (1.08, 2.82) and (1.53, 2.88) .. (1.97, 2.87);
    \draw (2.20, 2.87) .. controls (2.45, 2.87) and (2.69, 2.86) .. (2.93, 2.86);
    \draw (3.17, 2.86) .. controls (3.38, 2.86) and (3.59, 2.85) .. (3.81, 2.85);
    \draw (3.81, 2.85) .. controls (4.32, 2.85) and (4.84, 2.73) .. (5.25, 2.42);
    \draw (5.25, 2.42) .. controls (5.51, 2.22) and (5.78, 2.02) .. (6.04, 1.81);
    \draw (6.04, 1.81) .. controls (6.37, 1.56) and (6.78, 1.46) .. (7.20, 1.45);
    \draw (7.43, 1.45) .. controls (7.69, 1.44) and (7.95, 1.44) .. (8.20, 1.43);
    \draw (8.20, 1.43) .. controls (8.55, 1.43) and (8.92, 1.46) .. (9.21, 1.65);
    \draw (9.40, 1.77) .. controls (9.87, 2.07) and (9.87, 3.04) ..
          (9.87, 3.80) .. controls (9.87, 5.25) and (8.02, 5.55) .. (6.33, 5.57);
  \end{scope}
  \begin{scope}[color=linkcolor2]
    \draw (4.90, 4.87) .. controls (3.50, 4.87) and (2.12, 4.16) .. (2.09, 2.87);
    \draw (2.09, 2.87) .. controls (2.07, 2.45) and (2.49, 2.17) .. (2.94, 2.16);
    \draw (3.18, 2.16) .. controls (3.39, 2.16) and (3.60, 2.16) .. (3.81, 2.15);
    \draw (3.81, 2.15) .. controls (4.27, 2.15) and (4.74, 2.15) .. (5.15, 2.36);
    \draw (5.36, 2.47) .. controls (6.07, 2.84) and (6.89, 2.92) ..
          (7.70, 2.90) .. controls (8.43, 2.89) and (9.14, 3.23) ..
          (9.14, 3.88) .. controls (9.13, 4.88) and (7.62, 4.88) .. (6.34, 4.88);
    \draw (6.34, 4.88) .. controls (5.94, 4.88) and (5.54, 4.88) .. (5.14, 4.87);
  \end{scope}
  \begin{scope}[color=linkcolor3]
    \draw (3.81, 2.93) .. controls (3.80, 3.12) and (3.62, 3.24) ..
          (3.42, 3.24) .. controls (3.21, 3.25) and (3.04, 3.07) .. (3.05, 2.86);
    \draw (3.05, 2.86) .. controls (3.05, 2.63) and (3.05, 2.39) .. (3.06, 2.16);
    \draw (3.06, 2.16) .. controls (3.06, 1.94) and (3.23, 1.76) ..
          (3.44, 1.75) .. controls (3.64, 1.74) and (3.81, 1.88) .. (3.81, 2.07);
    \draw (3.81, 2.27) .. controls (3.81, 2.43) and (3.81, 2.58) .. (3.81, 2.73);
  \end{scope}
  \begin{scope}[color=linkcolor4]
    \draw (8.19, 2.18) .. controls (8.19, 2.38) and (7.97, 2.48) ..
          (7.75, 2.48) .. controls (7.52, 2.48) and (7.32, 2.33) .. (7.32, 2.11);
    \draw (7.32, 2.11) .. controls (7.32, 1.89) and (7.32, 1.67) .. (7.32, 1.45);
    \draw (7.32, 1.45) .. controls (7.32, 1.16) and (7.50, 0.90) ..
          (7.76, 0.89) .. controls (8.01, 0.89) and (8.21, 1.08) .. (8.20, 1.32);
    \draw (8.20, 1.55) .. controls (8.20, 1.70) and (8.20, 1.85) .. (8.19, 1.99);
  \end{scope}
\end{tikzpicture}
\caption{Three incommensurable arithmetic links.}\label{fig:ArithLink}
\end{figure}
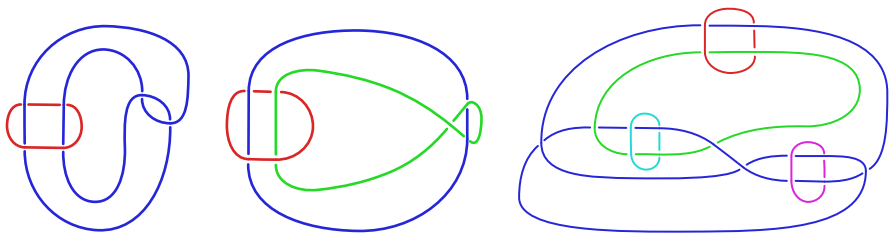

See Figure \ref{fig:FirstBelt} for the belted sum of the Whitehead link and the three chain link. This is the link $7_6^2$ from Rolfsen's tables \cite{Rolfsen}. This is a link complement built from dissimilar hyperbolic building blocks, hence it contains only finitely many immersed totally geodesic submanifolds. It is alternating, so it has no closed immersed totally geodesic submanifolds by \cite[Thm.\ 1]{MenascoReid}. We note that it has invariant trace field $\Q(i, \sqrt{-7})$ by a theorem of Neumann--Reid (see \cite[\S 5.6]{MaclachlanReid}). However, to prove Theorem \ref{thm:LinkMain}, we use the five component link in Figure \ref{fig:ArithLink}.

\begin{figure}
\definecolor{linkcolor0}{rgb}{0.85, 0.15, 0.15}
\definecolor{linkcolor1}{rgb}{0.15, 0.15, 0.85}
\begin{tikzpicture}[line width=1, line cap=round, line join=round, scale=0.5]
  \begin{scope}[color=linkcolor0]
    \draw (5.93, 6.06) .. controls (5.92, 6.49) and (5.50, 6.79) ..
          (5.04, 6.80) .. controls (4.60, 6.82) and (4.16, 6.61) .. (4.15, 6.21);
    \draw (4.15, 5.90) .. controls (4.15, 5.74) and (4.14, 5.57) .. (4.14, 5.41);
    \draw (4.14, 5.08) .. controls (4.13, 4.58) and (4.53, 4.17) ..
          (5.03, 4.15) .. controls (5.57, 4.13) and (5.94, 4.67) .. (5.93, 5.26);
    \draw (5.93, 5.26) .. controls (5.93, 5.52) and (5.93, 5.79) .. (5.93, 6.06);
  \end{scope}
  \begin{scope}[color=linkcolor1]
    \draw (7.75, 4.32) .. controls (7.74, 5.01) and (6.91, 5.26) .. (6.11, 5.26);
    \draw (5.76, 5.26) .. controls (5.22, 5.26) and (4.68, 5.25) .. (4.14, 5.25);
    \draw (4.14, 5.25) .. controls (3.08, 5.25) and (1.96, 4.96) .. (1.99, 4.05);
    \draw (1.99, 3.70) .. controls (2.00, 3.36) and (2.01, 3.03) .. (2.02, 2.69);
    \draw (2.02, 2.69) .. controls (2.05, 1.66) and (3.35, 1.48) ..
          (4.54, 1.48) .. controls (5.74, 1.49) and (7.03, 1.77) ..
          (7.05, 2.82) .. controls (7.05, 3.45) and (7.20, 4.14) .. (7.75, 4.14);
    \draw (7.75, 4.14) .. controls (8.44, 4.14) and (8.99, 3.56) .. (9.00, 2.85);
    \draw (9.00, 2.50) .. controls (9.03, 0.68) and (6.76, 0.14) ..
          (4.63, 0.14) .. controls (2.65, 0.13) and (0.21, 0.12) ..
          (0.23, 1.33) .. controls (0.24, 2.04) and (0.52, 2.74) .. (1.08, 3.17);
    \draw (1.36, 3.39) .. controls (1.57, 3.55) and (1.78, 3.71) .. (1.99, 3.88);
    \draw (1.99, 3.88) .. controls (2.38, 4.18) and (2.96, 3.86) ..
          (2.96, 3.31) .. controls (2.96, 2.72) and (2.49, 2.34) .. (2.16, 2.58);
    \draw (1.88, 2.79) .. controls (1.66, 2.95) and (1.44, 3.12) .. (1.22, 3.28);
    \draw (1.22, 3.28) .. controls (0.63, 3.72) and (0.15, 4.33) ..
          (0.14, 5.06) .. controls (0.12, 6.06) and (2.36, 6.06) .. (4.15, 6.06);
    \draw (4.15, 6.06) .. controls (4.68, 6.06) and (5.22, 6.06) .. (5.75, 6.06);
    \draw (6.10, 6.06) .. controls (7.83, 6.06) and (9.76, 5.86) ..
          (9.80, 4.36) .. controls (9.83, 3.55) and (9.69, 2.67) .. (9.00, 2.68);
    \draw (9.00, 2.68) .. controls (8.31, 2.69) and (7.76, 3.27) .. (7.75, 3.97);
  \end{scope}
\end{tikzpicture}
\caption{The belted sum of the Whitehead link and the three chain link.}\label{fig:FirstBelt}
\end{figure}
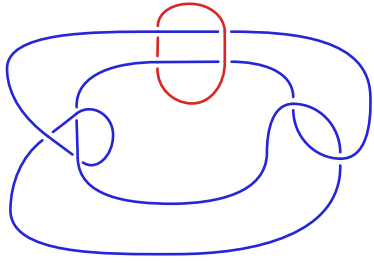

\begin{proof}[Proof of Theorem \ref{thm:LinkMain}]
Let $L_1$ be the Whitehead link and $L_2$ the five component link in Figure \ref{fig:ArithLink}. Then $L_2$ contains a pair of totally geodesic $3$-punctured spheres $S_2^+$ and $S_2^-$ associated with any two of the three unknotted components. For $r \ge 3$, let $L_r$ be the two component link obtained by performing $1/r$-Dehn surgery on the third unknotted component of $L_2$. Note that $S_2^+$ and $S_2^-$ still determine totally geodesic thrice-punctured spheres in $S^3 \ssm L_r$; choose one and call it $S_r$.

Let $N_1$ and be $S^3 \ssm L_1$ cut open along the totally geodesic $3$-punctured sphere $S_1$ associated with its unknotted component. Let $N_2$ be a copy of $S^3 \ssm L_2$, cut open along $S_2^+$ and $S_2^-$. Finally, let $N_r$ be the result of cutting $S^3 \ssm L_r$ open along $S_r$, $r \ge 3$. We can then perform an iterated belted sum, starting with $N_1$ belted to $N_2$ by gluing $S_1$ to $S_2^+$, then $N_2$ belted to $N_r$ by gluing $S_2^-$ to $S_r$.
\[
N_1 \overset{S_1 \leftrightarrow S_2^+}{\longleftrightarrow} N_2 \overset{S_2^- \leftrightarrow S_r}{\longleftrightarrow} N_r
\]
This determines a link $\calL_r$ in $S^3$ that is built from building blocks, and there are two adjacent blocks that are arithmetic and dissimilar, so $S^3 \ssm \calL_r$ contains only finitely many immersed totally geodesic submanifolds.

To complete the proof of the theorem, we must check that the links $\calL_r$ determine infinitely many distinct commensurability classes of hyperbolic $3$-manifolds. Let $k_r$ be the trace field of $S^3 \ssm \calL_r$. It suffices to show that the degree $[k_r : \Q]$ goes to infinity as $r$ goes to infinity. Let $F_r = \Q(\alpha_r)$ be the trace field of $S^3 \ssm L_r$. By a theorem of Neumann--Reid \cite[\S 5.6]{MaclachlanReid}, we have
\[
k_r = \Q(i, \sqrt{-7}, \alpha_r),
\]
so it now suffices to check that $[F_r : \Q]$ gets arbitrarily large. However, the links $L_r$ are all Dehn surgery on a fixed link (namely, $L_2$), so $S^3 \ssm L_r$ has uniformly bounded volume. Since there are only finitely many hyperbolic $3$-manifolds of bounded volume and trace field degree \cite{Jeon}, the result follows.
\end{proof}

\subsection{Hyperbolic Coxeter lattices  \label{subsec:Coxeter}}

A Coxeter polyhedron $P\subset \mathbb H^n$ is a finite-volume polyhedron with totally geodesic faces having the property that adjacent faces intersect at angles which are integral submultiples of $\pi$. Such finite-volume polyhedra can only exist in dimensions $n\leq 995$ by Prokhorov \cite{Prokhorov}, and compact ones can only exist in dimensions $n\leq 29$ by Vinberg \cite{Vinberg}. Associated with such a polyhedron $P$, one can form the lattice $\Gamma_P < \SO(n,1)$ generated by reflections in the hyperplanes containing the faces of the polyhedron. We call these \emph{Coxeter lattices}. Vinberg gave a simple criterion for whether such a lattice is arithmetic, and gave examples in dimensions $n=3,4,5$ of non-arithmetic Coxeter lattices that pre-dated the Gromov--Piatetski-Shapiro constructions. This was extended by Ruzmanov \cite{Ruzmanov} who produced non-arithmetic Coxeter lattices in dimension $6\leq n \leq 10$.

In a recent paper, Vinberg \cite{Vinberg2} points out that the
Gromov--Piatetski-Shapiro construction can also be applied to Coxeter polyhedra in a particularly simple manner. Given a pair
of polyhedra $P_1, P_2$, let us assume that they contain faces $F_i\subset P_i$ which are isometric to each other and have the
property that every other face which intersects $F_i$ does so orthogonally. In that case, we can form a new Coxeter polyhedron $P$
by gluing the $P_i$ together along the $F_i$. If the original polyhedra $P_i$ are not commensurable, then the resulting $P$ is non-arithmetic.
Vinberg used this procedure to create non-arithmetic Coxeter lattices in dimensions $n=11, 12, 14, 18$ (as well as new examples in some
lower dimensions).

Let us call a Coxeter polyhedron $P$ {\it splittable} if one can find a totally geodesic hyperplane $H\subset \mathbb H^n$ with the property
that every face of $P$ which intersects $H$ either is contained in $H$ or intersects $H$ orthogonally. Observe that the polyhedra $P_i$ in
Vinberg's gluing construction are splittable, as is the resulting polyhedron $P$. Splittable polyhedra also play a key role in
work of Allcock \cite{allcock}.
We call a polyhedron {\it unsplittable} if it is not splittable.

\begin{lemma}\label{lem-unsplittable-polyhedron}
Let $P$ be an unsplittable Coxeter polyhedron. Then the associated Coxeter lattice $\Gamma_P$ is not commensurable with any
Gromov--Piatetski-Shapiro type lattice.
\end{lemma}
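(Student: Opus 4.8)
The plan is to argue by contradiction, using Angle Rigidity (Theorem~\ref{thm:GPSFiniteness}) to manufacture a splitting hyperplane for $P$ out of a hypothetical Gromov--Piatetski-Shapiro structure. So suppose $\Gamma_P$ is commensurable with a Gromov--Piatetski-Shapiro type lattice. Then there is a lattice $\Lambda$ commensurable with $\Gamma_P$ for which $M = \Hy^n/\Lambda$ is a non-arithmetic hyperbolic manifold built from building blocks having two adjacent blocks $N_1, N_2$ that are arithmetic and dissimilar; in particular $M$ satisfies the hypotheses of Theorem~\ref{thm:GPSFiniteness} (as noted in \S\ref{ssec:GPS}, the hybrids of \cite{GPS} themselves are of this form). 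We may assume $n \ge 3$, since the relevant constructions are vacuous otherwise. After conjugating $\Lambda$ in $\Isom(\Hy^n)$ we may further assume $\Delta := \Gamma_P \cap \Lambda$ has finite index in both $\Gamma_P$ and $\Lambda$. Fix a cutting hypersurface $\Sigma \subset M$ --- a connected component of $N_1 \cap N_2$ --- and a lift $\widetilde\Sigma \subset \Hy^n$, which is a totally geodesic hyperplane.

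The first step is to check that every wall $\widetilde H$ of $P$ descends to a finite-volume immersed totally geodesic hypersurface of $M$. Since $\widetilde H$ is the fixed hyperplane of a reflection in $\Gamma_P$, it is standard (going back to Vinberg's theory of hyperbolic reflection groups) that its stabilizer $\mathrm{Stab}_{\Gamma_P}(\widetilde H)$ is a lattice in $\Isom(\widetilde H) \cong \Ort(n-1,1)$; hence the finite-index subgroup $\mathrm{Stab}_\Delta(\widetilde H)$ is as well, and therefore so is the discrete group $\mathrm{Stab}_\Lambda(\widetilde H) \supseteq \mathrm{Stab}_\Delta(\widetilde H)$. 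Consequently the image $A := \pi_\Lambda(\widetilde H)$ of $\widetilde H$ under the covering $\pi_\Lambda \colon \Hy^n \to M$ is a connected, finite-volume, immersed totally geodesic submanifold of $M$ of dimension $n - 1 \ge 2$.

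Now take any wall $\widetilde H$ of $P$ with $\widetilde H \cap \widetilde\Sigma \ne \emptyset$ and $\widetilde H \ne \widetilde\Sigma$; then $\widetilde H \cap \widetilde\Sigma$ has codimension two, so near any point $p$ of this intersection $\widetilde H$ has points strictly on both sides of $\widetilde\Sigma$. Since $\Sigma$ locally separates $M$ into the side lying in $\mathrm{int}(N_1)$ and the side lying in $\mathrm{int}(N_2)$, the submanifold $A$ therefore meets the interiors of both $N_1$ and $N_2$, i.e.\ $A$ crosses $\Sigma$. Angle Rigidity then forces $A$ to meet $\Sigma$ orthogonally; lifting this statement at $p$ through $\pi_\Lambda$ shows that the normal line to $\widetilde H$ at $p$ lies in $T_p\widetilde\Sigma$, which is precisely the condition that $\widetilde H$ and $\widetilde\Sigma$ be orthogonal. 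Thus every wall of $P$ meeting $\widetilde\Sigma$ is either equal to $\widetilde\Sigma$ or orthogonal to it, and hence every face of $P$ meeting $\widetilde\Sigma$ is contained in $\widetilde\Sigma$ or meets it orthogonally. So $\widetilde\Sigma$ exhibits $P$ as splittable, contradicting the hypothesis, and $\Gamma_P$ cannot be commensurable with a Gromov--Piatetski-Shapiro type lattice.

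The step I expect to require the most care is the passage to Theorem~\ref{thm:GPSFiniteness}. One must ensure that a wall of the Coxeter polyhedron really yields a \emph{finite-volume} totally geodesic hypersurface in the commensurable hybrid manifold $M$ --- this is exactly where the structure theory for stabilizers of walls of a finite-volume Coxeter polyhedron is invoked --- and that the codimension-two (hence transverse) intersection of $\widetilde H$ with $\widetilde\Sigma$ genuinely forces $A$ to cross the cutting hypersurface, rather than merely be tangent to $\Sigma$ along $A \cap \Sigma$. Once these points are settled, the orthogonality computation is routine and the contradiction with unsplittability is immediate.
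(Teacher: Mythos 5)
Your proposal is correct and is essentially the paper's own argument: both pass to a common (finite-index) quotient that is simultaneously a Gromov--Piatetski-Shapiro hybrid and tessellated by copies of $P$, observe that walls of $P$ descend to finite-volume immersed totally geodesic hypersurfaces, and invoke Angle Rigidity (Theorem \ref{thm:GPSFiniteness}) to force every wall crossing the cutting hypersurface to be orthogonal to it, contradicting unsplittability. The only difference is presentational --- you argue the contrapositive (all walls orthogonal, hence splittable) and spell out the stabilizer/finite-volume and transversality points that the paper leaves implicit.
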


\begin{proof}
By way of contradiction, assume that $\Gamma _P$ is commensurable with a Gromov--Piatetski-Shapiro type lattice $\Lambda$. Then
there are isomorphic finite index torsion-free subgroups $\Gamma < \Gamma_P$ and $\Gamma < \Lambda$. The associated hyperbolic
manifold $M = \mathbb H^n/\Gamma$ is then itself a Gromov--Piatetski-Shapiro type hyperbolic manifold,
but is also tessellated by finitely many copies of the hyperbolic polyhedron $P$. Let $\Sigma \subset M$ be a totally geodesic cutting
hypersurface for the Gromov--Piatetski-Shapiro decomposition of $M$.

In the tessellation
of $M$, consider any of the copies of $P$ that intersect $\Sigma$. Since $P$ is unsplittable and $\Sigma$ is codimension one, there
exists a face of this polyhedron that intersects $\Sigma$ at an angle $0<\theta<\pi/2$. But this face extends to a
closed totally geodesic immersed hypersurface, which contradicts angle rigidity.
\end{proof}

\begin{figure}
\begin{center}
\begin{tikzpicture}

\draw (1,0) -- (2,0);
\draw (0,.5) -- (1,0);
\draw (2,0) -- (3, .5);
\draw (1,.96) -- (2,.96);
\draw (1,1.04) -- (2,1.04);
\draw (0,.5) -- (1,1);
\draw (2,1) -- (3, .5);

\draw[fill=white] (0, .5) circle [radius=.1];
\draw[fill=white] (1, 0) circle [radius=.1];
\draw[fill=white] (2, 0) circle [radius=.1];
\draw[fill=white] (3, .5) circle [radius=.1];
\draw[fill=white] (1, 1) circle [radius=.1];
\draw[fill=white](2, 1) circle [radius=.1];

\end{tikzpicture}
\caption{The Coxeter diagram of a $5$-simplex in $\Hy^5$.}\label{fig:Coxeter5D}
\end{center}
\end{figure}
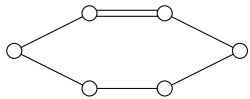

We now produce examples in dimension $3$ and $5$ to which Lemma \ref{lem-unsplittable-polyhedron} applies. First we briefly recall Vinberg's notion of a \emph{quasi-arithmetic} lattice. Let $\calG$ be a $\Q$-algebraic group such that $\calG(\Z)$ defines an arithmetic lattice in $\SO(n,1)$. A lattice $\Ga$ in $\SO(n,1)$ is quasi-arithmetic if it is contained in $\calG(\Q)$. Vinberg showed that there are quasi-arithmetic lattices that are not arithmetic (i.e., are not commensurable with $\calG(\Z)$) \cite{Vinberg3}. The inbreeding examples of Agol and Belolipetsky--Thomson are also quasi-arithmetic, but the Gromov--Piatetski-Shapiro type examples described in \S \ref{ssec:GPS} are not (see \cite{Thomson}).

\medskip

\noindent The $5$-dimensional example in the following will prove Theorem \ref{thm:MainCoxeter}, i.e., that there are non-arithmetic lattices in $\SO(5,1)$ that are not commensurable with a lattice constructed by the methods of Gromov--Piatetski-Shapiro or Agol.

\begin{ex}
It is easy to see that if the Coxeter polyhedron $P \subset \mathbb H^n$ is combinatorially an $n$-simplex, then $P$ is unsplittable.
There are 72 finite volume Coxeter polyhedra with this combinatorial type. Of these, a few examples give rise to non-arithmetic lattices:
one in dimension $5$ and seven in dimension $3$ (see \cite[p.\ 128]{JKRT}).
The $5$-dimensional example arises from a non-compact finite-volume $5$-simplex in $\mathbb H^5$ with
Coxeter diagram given in Figure \ref{fig:Coxeter5D}. For the $3$-dimensional examples, six of the seven examples are non-compact
and finite volume, while one is compact. Their Coxeter diagrams are listed out in Figure \ref{fig:Coxeter3D}, with the upper left-most
diagram corresponding to the compact $3$-simplex polyhedron.
For all these examples, our Lemma \ref{lem-unsplittable-polyhedron} applies and the associated lattices are not commensurable with any Gromov--Piatetski-Shapiro
type lattices. Moreover, for the non-compact polyhedra, the associated lattices are not quasi-arithmetic (see \cite[p.\ 442, Remark 3]{Vinberg3}). This proves Theorem \ref{thm:MainCoxeter}.
\end{ex}

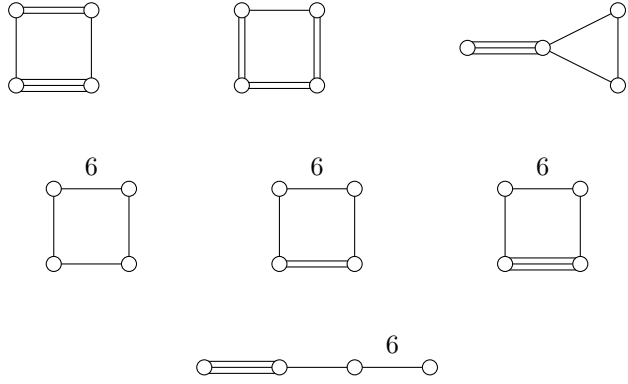
\begin{figure}
\begin{center}
\begin{tikzpicture}

\draw (-3,0) -- (-2,0);
\draw (-3, .08) -- (-2, .08);
\draw (-3, -.08) -- (-2, -.08);
\draw (-3,0) -- (-3,1);
\draw (-2,0) -- (-2,1);

\draw (-3,.96) -- (-2,.96);
\draw (-3,1.04) -- (-2,1.04);

\draw[fill=white] (-3, 0) circle [radius=.1];
\draw[fill=white] (-2, 0) circle [radius=.1];
\draw[fill=white] (-3, 1) circle [radius=.1];
\draw[fill=white](-2, 1) circle [radius=.1];

\draw (0, .04) -- (1, .04);
\draw (0, -.04) -- (1, -.04);
\draw (-.04,0) -- (-.04,1);
\draw (.04,0) -- (.04,1);
\draw (.96,0) -- (.96,1);
\draw (1.04,0) -- (1.04,1);

\draw (0,1) -- (1,1);

\draw[fill=white] (0, 0) circle [radius=.1];
\draw[fill=white] (1, 0) circle [radius=.1];
\draw[fill=white] (0, 1) circle [radius=.1];
\draw[fill=white](1, 1) circle [radius=.1];

\draw (3, 0.5) -- (4,0.5);
\draw (3, .58) -- (4, .58);
\draw (3, .42) -- (4, .42);
\draw (4, 0.5) -- (5, 1);
\draw (4, 0.5) -- (5, 0);
\draw (5,0) -- (5, 1);

\draw[fill=white] (3, 0.5) circle [radius=.1];
\draw[fill=white] (4, 0.5) circle [radius=.1];
\draw[fill=white] (5, 0) circle [radius=.1];
\draw[fill=white] (5, 1) circle [radius=.1];

\end{tikzpicture}

\vskip 20pt

\begin{tikzpicture}

\draw (-3, 0) -- (-2, 0);
\draw (-3,0) -- (-3,1);
\draw (-2,0) -- (-2,1);
\draw (-3,1) -- (-2,1);

\node at (-2.5, 1.3) {6};

\draw[fill=white] (-3, 0) circle [radius=.1];
\draw[fill=white] (-2, 0) circle [radius=.1];
\draw[fill=white] (-3, 1) circle [radius=.1];
\draw[fill=white](-2, 1) circle [radius=.1];

\draw (0, .04) -- (1, .04);
\draw (0, -.04) -- (1, -.04);
\draw (0,0) -- (0,1);
\draw (1,0) -- (1,1);
\draw (0,1) -- (1,1);

\node at (0.5, 1.3) {6};

\draw[fill=white] (0, 0) circle [radius=.1];
\draw[fill=white] (1, 0) circle [radius=.1];
\draw[fill=white] (0, 1) circle [radius=.1];
\draw[fill=white](1, 1) circle [radius=.1];

\draw (3,0) -- (4,0);
\draw (3, .08) -- (4, .08);
\draw (3, -.08) -- (4, -.08);
\draw (3,0) -- (3,1);
\draw (4,0) -- (4,1);
\draw (3,1) -- (4,1);

\node at (3.5, 1.3) {6};

\draw[fill=white] (3, 0) circle [radius=.1];
\draw[fill=white] (4, 0) circle [radius=.1];
\draw[fill=white] (3, 1) circle [radius=.1];
\draw[fill=white](4, 1) circle [radius=.1];

\end{tikzpicture}

\vskip 20pt

\begin{tikzpicture}

\draw (0, 0) -- (1, 0);
\draw (0, 0.08) -- (1, 0.08);
\draw (0, -0.08) -- (1, -0.08);
\draw (1,0) -- (2,0);
\draw (2,0) -- (3,0);

\node at (2.5, 0.3) {6};

\draw[fill=white] (0, 0) circle [radius=.1];
\draw[fill=white] (1, 0) circle [radius=.1];
\draw[fill=white] (2, 0) circle [radius=.1];
\draw[fill=white](3, 0) circle [radius=.1];

\end{tikzpicture}

\caption{Coxeter diagrams for $3$-simplices in $\Hy^3$.}\label{fig:Coxeter3D}
\end{center}
\end{figure}

Given a splittable Coxeter polyhedron $P\subset \mathbb H^n$, one can consider the hyperplane $H\subset \mathbb H^n$
giving rise to the splitting. Then the intersection $H\cap P$ yields a Coxeter polyhedron in $H \cong \mathbb H^{n-1}$ with
faces of $H\cap P$ corresponding bijectively with the faces of the original $P$ that were orthogonal to $H$. The number of
faces of $H\cap P$ is strictly smaller than the number of faces of $P$. Indeed, $H$ separates $\mathbb H^n$ into two half-spaces
$H^+$ and $H^-$, and in order for $P$ to have finite volume, there needs to be at least one face of $P$ on each side of $H$
satisfying $H^\pm \cap \mathrm{Int}(P) \neq \emptyset$. Such faces will be disjoint from $H$, giving a generator for $\Gamma_P$
which does not lie in $\Gamma_{P\cap H}$. Recall that a {\it special subgroup} of a Coxeter group is a subgroup generated
by a subset of the Coxeter generating set. This establishes the following:

\begin{lemma}\label{lem:SplitMe}
If a Coxeter polyhedron $P\in \mathbb H^n$ is splittable, then the associated Coxeter group
contains a proper special subgroup which is itself a Coxeter group for a Coxeter polyhedron $P'\subset \mathbb H^{n-1}$.
\end{lemma}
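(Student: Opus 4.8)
The plan is to exhibit the pair $(P',\Gamma')$ directly: take $P' := H \cap P$ inside $H \cong \Hy^{n-1}$, where $H$ is a splitting hyperplane chosen so that $H \cap \mathrm{Int}(P) \neq \emptyset$ (the only case with content: if $H$ misses the interior of $P$, then $P'$ fails to be full-dimensional and the assertion is vacuous), and let $\Gamma' < \Gamma_P$ be the subgroup generated by the reflections $r_F$ in those walls $H_F$ of $P$ whose facet $F$ is orthogonal to $H$. Since the reflections in the walls of $P$ form the standard Coxeter generating set of $\Gamma_P$, the subgroup $\Gamma'$ is automatically a special subgroup, so it remains to prove: (i) $P'$ is a Coxeter polyhedron in $\Hy^{n-1}$ whose facets are in bijection with the $H$-orthogonal facets of $P$; (ii) $\Gamma'$, acting on $H$, is exactly $\Gamma_{P'}$; and (iii) $\Gamma' \neq \Gamma_P$.

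For (i): under the standing assumption no facet of $P$ is contained in $H$ (that would force $P$ into a closed half-space bounded by $H$), so by splittability every facet of $P$ that meets $H$ meets it orthogonally. Slicing the half-space presentation of $P$ by $H$ --- a wall whose facet is disjoint from $H$ imposes no condition on $H$, while a wall $H_F$ with $F$ orthogonal to $H$ contributes the half-space of $H$ bounded by $H_F \cap H$ --- one obtains $\partial_H P' = \bigcup \{\, F \cap H : F \text{ a facet of } P \text{ orthogonal to } H \,\}$, which gives the claimed facet bijection and identifies the walls of $P'$ with the hyperplanes $H_F \cap H$. Finite volume of $P'$ follows since $P \cap \overline{H^+}$ is a finite-volume polyhedron (it is contained in $P$) with $P'$ as a facet, and facets of finite-volume polyhedra are finite-volume. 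For the dihedral angles I would use the elementary fact that two hyperplanes both orthogonal to $H$ make the same angle in $\Hy^n$ as their traces make in $H$ (immediate in the Lorentzian model, where the two spacelike normal vectors already lie in the linear hyperplane defining $H$, so the relevant Gram matrix is unchanged); hence the angles of $P'$ are integral submultiples of $\pi$.

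For (ii) and (iii): the same model computation shows that each $r_F$ with $F$ orthogonal to $H$ preserves $H$ and each of its two sides $H^\pm$, and restricts on $H$ to the reflection in the wall $H_F \cap H$ of $P'$; since these are precisely the wall reflections of $P'$, restriction to $H$ maps $\Gamma'$ onto $\Gamma_{P'}$, and its kernel (elements of $\Gamma_P$ fixing $H$ pointwise, i.e.\ $\{1\}$ or $\{1,\iota_H\}$ with $\iota_H$ the reflection in $H$) is trivial because every element of $\Gamma'$ preserves $H^+$ whereas $\iota_H$ swaps the sides. This proves $\Gamma' \cong \Gamma_{P'}$. For properness, by the standard fact that a proper subset of the standard generators of a Coxeter system generates a proper subgroup, it suffices to find one facet $F_0$ of $P$ not orthogonal to $H$: if every facet were orthogonal to $H$, every generator of $\Gamma_P$ would preserve $H$, so the lattice $\Gamma_P$ would stabilize the proper totally geodesic subspace $H$ and hence have limit set inside $\partial^\infty H \subsetneq \partial^\infty \Hy^n$, which is impossible since a lattice has full limit set (this is exactly the finite-volume remark made just before the lemma). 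The step I expect to require genuine care is (i) --- verifying cleanly that $P' = P \cap H$ is a finite-volume Coxeter polyhedron with the stated facet correspondence, and controlling degenerate positions of $H$ relative to the lower-dimensional faces and ideal vertices of $P$; steps (ii) and (iii) are then routine.
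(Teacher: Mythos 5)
Your construction is exactly the one the paper uses (its proof is the paragraph preceding the lemma's statement): take $P' = H \cap P$ and the special subgroup generated by reflections in the walls of $P$ orthogonal to $H$, with the same facet bijection; you have simply filled in more of the routine verifications, and like the paper you implicitly restrict to splitting hyperplanes meeting $\mathrm{Int}(P)$. The only divergence is the properness step, where the paper observes that finite volume forces $P$ to have a face on each side of $H$ disjoint from $H$ (hence a Coxeter generator outside the special subgroup), while you argue that if every wall were orthogonal to $H$ then the lattice $\Gamma_P$ would stabilize $H$ and so could not have full limit set; both arguments are correct.
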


In practice, Lemma \ref{lem:SplitMe} can be used to show that certain Coxeter polyhedron are unsplittable. For example, Andreev's
theorem gives an algorithm for deciding whether a Coxeter group arises from a polyhedron in $\mathbb H^3$ (see
\cite[\S 6.10]{davis}). Thus if
one is given a Coxeter polyhedron $P\subset \mathbb H^4$, one can sometimes use Andreev's theorem to algorithmically verify
if the polyhedron is unsplittable.
Unfortunately all examples of $4$-dimensional Coxeter polyhedra currently known to the authors are either splittable, or yield
lattices that are quasi-arithmetic.

Closely related to our unsplittable polyhedra are the {\it essential} polyhedra, introduced by Felikson and Tumarkin \cite{FT}.
These are polyhedra which cannot be decomposed into smaller Coxeter polyhedra glued along faces. In their paper they give a
commensurability classification of all the known (at the time) essential polyhedra. While the two notions of unsplittable
and essential are related, neither implies the other.

\medskip

We close by noting that it would be interesting to use results of this kind to produce infinitely many commensurability classes of lattices in $\SO(n,1)$ for small $n$ that cannot arise from  the constructions of Gromov--Piatetski-Shapiro or Agol. While this seems possible in small dimensions, one still wonders if the ``maybe for large $n$'' in Question \ref{qtn:GPS} is related to the non-existence of Coxeter polytopes and related geometric objects in sufficiently high dimension.

\bibliographystyle{abbrv}
\bibliography{Biblio}

\end{document}